\declaretheorem[name=Theorem,numberwithin=section]{theorem}
\newtheorem{lemma}{Lemma}
\newtheorem{conjecture}{Conjecture}
\crefname{conjecture}{Conjecture}{Conjectures}
\theoremstyle{definition}
\newtheorem{definition}{Definition}
\newtheorem{remark}{Remark}
\newcommand{\ignore}[1]{}
\newcommand{\hcm}[1][1]{\hspace*{#1 cm}}
\newcommand{\rb}[2]{\raisebox{#1 mm}[0mm][0mm]{#2}}
\newcommand{\istrut}[2][0]{\rule[- #1 mm]{0mm}{#1 mm}\rule{0mm}{#2 mm}}
\newcommand{\zero}[1]{\makebox[0mm][l]{$#1$}}
\newcommand{\E}{{\mathbb E\/}}
\newcommand{\floor}[1]{\left\lfloor #1 \right\rfloor}
\newcommand{\fr}[2]{\mbox{$\frac{#1}{#2}$}}
\newcommand{\bydef}{\stackrel{\operatorname{def}}{=}}
\newcommand{\poly}{\operatorname{poly}}
\newcommand{\MOD}{\operatorname{mod}}
\newcommand{\ul}[1]{\underline{#1}}
\newcommand{\Erdos}{Erd\H{o}s}
\newcommand{\Renyi}{R\'{e}nyi}
\newcommand{\Furedi}{F\"{u}redi}
\newcommand{\Kyncl}{Kyn\v{c}l}
\newcommand{\Korandi}{Kor\'{a}ndi}
\newcommand{\Kovari}{K\H{o}v\'{a}ri}
\newcommand{\Sos}{S\'{o}s}
\newcommand{\Turan}{Tur\'{a}n}
\newcommand{\Gyori}{Gy\H{o}ri}
\newcommand{\Ex}{\operatorname{Ex}}
\newcommand{\ExT}{\Ex_{\mathit{Tur}}}
\newcommand{\ExH}{\Ex_{<}}
\newcommand{\bu}{\ensuremath\bullet}
\renewcommand{\S}{\mathcal{S}}
\newcommand{\Oneeven}[1]{\mathbf{1}_0({#1})}
\newcommand{\Oneodd}[1]{\mathbf{1}_1({#1})}
\newcommand{\ieven}{i^{\operatorname{even}}}
\newcommand{\iodd}{i^{\operatorname{odd}}}
\newcommand{\Row}{\mathbf{r}}
\newcommand{\Col}{\mathbf{c}}
\newcommand{\sig}{\operatorname{sig}}
\newcommand{\slab}{\operatorname{slab}}
\newcommand{\first}{\mathsf{first}}
\newcommand{\last}{\mathsf{last}}
\title{A Refutation of the Pach-Tardos Conjecture for 0--1 Matrices\thanks{S. Pettie is supported by NSF Grant CCF-2221980.  G. Tardos is supported by the National Research, Development and Innovation Office projects K-132696 and SNN-135643 and by the ERC Advanced Grants ``ERMiD'' and ``GeoScape.''}}
\author{Seth Pettie\\ University of Michigan \and G\'abor Tardos\\ \Renyi{} Institute}
\date{}
\begin{document}

\maketitle

\begin{abstract}
The theory of forbidden 0--1 matrices generalizes \Turan-style (bipartite) subgraph avoidance, Davenport-Schinzel theory, and Zarankiewicz-type problems,
and has been influential in many areas, such as 
discrete and computational geometry,
the analysis of self-adjusting data structures, 
and the development of the graph parameter \emph{twin width}.

\medskip 

The foremost open problems in this area is to resolve the \emph{Pach-Tardos conjecture} from 2005,
which states that if a forbidden pattern $P\in\{0,1\}^{k\times l}$ is the bipartite incidence matrix of an \emph{acyclic} graph (forest), then $\Ex(P,n) = O(n\log^{C_P} n)$, where $C_P$ is a constant depending only on $P$.  This conjecture has been confirmed on many small patterns, specifically all $P$ 
with weight at most 5, and all but two with weight 6.

\medskip 

The main result of this paper is a clean refutation 
of the Pach-Tardos conjecture.  
Specifically, we prove that 
$\Ex(S_0,n),\Ex(S_1,n) \geq n2^{\Omega(\sqrt{\log n})}$,
where $S_0,S_1$ are the 
outstanding weight-6 patterns.

\begin{align*}
    S_0 &= \left(\begin{array}{cccc}
        \bu & & \bu & \\
        \bu &&& \bu\\
        &\bu&&\bu
    \end{array}\right),
&
    S_1 &= \left(\begin{array}{cccc}
        \bu &&& \bu\\
        \bu & & \bu & \\
        &\bu&&\bu
    \end{array}\right),
&
P_t &= \left(\begin{array}{ccccccc}
\bu&\zero{\hcm[-.1]\rb{4}{$\overbrace{\hcm[3.35]}^{\text{$t+1$ alternating 1s}}$}}\bu &&\bu&&&\bu\\
\bu&&\bu&&\rb{2.4}{$\cdots$}&\bu&
\end{array}\right).
\end{align*}

\medskip 

We also prove sharp bounds on the entire class of \emph{alternating}
patterns $(P_t)$, specifically that for every $t\geq 2$, $\Ex(P_t,n)=\Theta(n(\log n/\log\log n)^t)$.
This is the first proof of an asymptotically sharp bound 
that is $\omega(n\log n)$.
\end{abstract}

\section{Introduction}

The extremal theory of 
pattern-avoiding 0--1 matrices
kicked off in the late 1980s
when Mitchell~\cite{Mitchell92}, 
Pach and Sharir~\cite{PachS91},
and \Furedi~\cite{Furedi90} applied 
forbidden matrix arguments to problems 
in discrete and computational geometry.
In the early days this theory was characterized~\cite{Mitchell92,FurediH92} as a 
\emph{two dimensional} generalization of Davenport-Schinzel theory~\cite{AgarwalSharir95,Pettie-DS-JACM}.
It can also be characterized as a generalization of
\Turan{} theory~\cite{Turan41,FurediS13} from unordered 
bipartite graphs to \emph{ordered} bipartite graphs.
\Furedi{} and Hajnal~\cite{FurediH92} 
(see also Bienstock and \Gyori~\cite{BienstockG91}) 
began the daunting project of \emph{classifying} all
forbidden patterns by their extremal function,
a project to which many researchers have made important contributions over the years~\cite{Klazar92,KV94,Tardos05,PachTardos06,Keszegh09,Cibulka09,Fulek09,Geneson09,Pettie-FH11,Pettie-GenDS11,Fox13,ParkS13,Pettie-DS-JACM,Pettie15-SIDMA,CibulkaK17,GenesonT17,WellmanP18,GyoriKMTTV18,KorandiTTW19,Geneson19,FurediKMV20,GenesonT20,MethukuT22,GerbnerMNPTV23,KucheriyaT23,KucheriyaT23b,JanzerJMM24,ChalermsookPY24,PettieT24}.
Before proceeding let us define the terms.

\subsection{Forbidden Patterns, 0--1 Matrices, Extremal Functions}

A matrix $A\in\{0,1\}^{n\times m}$ \emph{contains} a pattern $P\in\{0,1\}^{k\times l}$,
written $P\prec A$, if it is possible to transform $A$ into $P$ by removing rows and columns from $A$, and flipping 1s to 0s.  If $P\nprec A$ we say $A$ is \emph{$P$-free}.
Define the \emph{extremal functions} as follows.
\begin{align*}
    \Ex(P,n,m) &= \max\{\|A\|_1 \mid A\in \{0,1\}^{n\times m} \mbox{ and } P\nprec A\},\\
    \Ex(P,n) &= \Ex(P,n,n),
\end{align*}
where $\|A\|_1$, the \emph{weight} of the matrix $A$, 
is the number of 1s in it.  $P\in\{0,1\}^{k\times l}$ 
can be regarded as the adjacency matrix of a bipartite graph with $k+l$ vertices,
where the parts of the partition (rows and columns) are implicitly \emph{ordered}.
Define $G(P)$ to be the unordered bipartite graph corresponding to $P$.
\Turan's extremal function $\ExT(H,n)$ is defined to be the maximum number of edges 
in a simple $n$-vertex graph not containing $H$ as a subgraph.  

\subsection{The Classification of Patterns} 

We have a crude classification of forbidden 
subgraphs according to the asymptotic behavior 
of their \Turan-extremal functions.
\begin{itemize}
    \item If $H$ is non-bipartite, then $\ExT(H,n)=\Theta(n^2)$.\footnote{\Erdos{}, Stone, and Simonovits~\cite{ErdosS46,ErdosS66}, generalizing \Turan's theorem~\cite{Turan41} bounded it more precisely as $\ExT(H,n) = (1-1/r+o(1)){n\choose 2}$ if $H$ has chromatic number $r+1$.}
    \item If $H$ is bipartite and contains a cycle, 
    then $\ExT(H,n) = \Omega(n^{1+c_1})$ and $O(n^{1+c_2})$ for $0<c_1<c_2<1$.\footnote{\Erdos{} and Simonovits conjectured that
$\Ex(H,n) = \Theta(n^{1+\alpha})$ for some rational $\alpha\in \mathbb{Q}$; see~\cite{FurediS13}.}
    \item If $H$ is acyclic (a forest) then $\ExT(H,n)=\Theta(n)$.
\end{itemize}

Is there a similarly clean asymptotic classification 
for forbidden patterns in 0--1 matrices?  
In a very influential paper, \Furedi{} and Hajnal~\cite{FurediH92}
observed that (trivially) $\Ex(P,n) =\Omega(\ExT(G(P),2n))$
and that there were several examples when 
$\Ex(P,n) = \omega(\ExT(G(P),2n))$, e.g.,\footnote{Following convention, we write patterns using 
bullets for 1s and blanks for 0s.}
\begin{align*}
P_1 &= \left(\begin{array}{ccc}
\bu&\bu\\
\bu&&\bu\end{array}\right),
&
Q_3  &= \left(\begin{array}{cccc}
\bu&&\bu\\
&\bu&&\bu
\end{array}\right).
\end{align*}
Both are \emph{acyclic matrices}, so $\ExT(G(P_1),n)=\ExT(G(Q_3),n)=O(n)$, but 
$\Ex(P_1,n)=\Theta(n\log n)$ \cite{BienstockG91,Furedi90,FurediH92,Tardos05}
and $\Ex(Q_3,n) = \Theta(n\alpha(n))$~\cite{FurediH92,HS86}.
The pattern $P_1$ arises in an analysis of the 
Bentley-Ottman line-sweeping algoithm~\cite{PachS91}, 
bounding unit distances in convex $n$-gons~\cite{Furedi90},
and bounding the total length of 
path compressions on arbitrary trees~\cite{Pettie10a}.
The pattern $Q_3$ corresponds to order-$3$ ($ababa$-free) 
Davenport-Schinzel sequences, which have applications
to lower envelopes~\cite{WS88,AgarwalSharir95} and corollaries of the dynamic optimality conjecture~\cite{ChalermsookGJAPY23,Pettie-Deque-08,ChalermsookG0MS15,ChalermsookPY24}.

\subsection{The \Furedi-Hajnal and Pach-Tardos Conjectures}

\Furedi{} and Hajnal made three 
conjectures concerning the relationship between 
$\Ex$ and $\ExT$.

\begin{conjecture}[\Furedi{} and Hajnal~\cite{FurediH92}]\label{conj:FH-perm}
If $P$ is a permutation matrix (equivalently, $G(P)$ is a matching),
then $\Ex(P,n) = O(\ExT(G(P),n)) = O(n)$.
\end{conjecture}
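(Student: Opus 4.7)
The plan is a block-decomposition / divide-and-conquer argument yielding a recurrence of the form $f(n) \leq (k-1)^2 f(n/k^2) + C_k \cdot n$, where $f(n) := \Ex(P,n)$ and $k$ is the size of $P$. Since $(k-1)^2 < k^2$ the recurrence telescopes to $f(n) = O(n)$ with the implicit constant depending only on $k$. Assume $k^2 \mid n$ and let $A \in \{0,1\}^{n\times n}$ be $P$-free. Partition $A$ into an $(n/k^2) \times (n/k^2)$ grid of $k^2 \times k^2$ blocks, and classify each block as \emph{small} if its 1-entries occupy fewer than $k$ rows \emph{and} fewer than $k$ columns (so at most $(k-1)^2$ ones), \emph{wide} if they occupy at least $k$ columns, and \emph{tall} if at least $k$ rows.

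The small blocks are handled by a contraction. Define the matrix $A' \in \{0,1\}^{(n/k^2) \times (n/k^2)}$ by setting $A'_{I,J} = 1$ iff the $(I,J)$-block of $A$ is non-empty. Any copy of $P$ in $A'$ lifts to a copy of $P$ in $A$ by choosing a single 1-entry from each indicated block (rows drawn from distinct block-rows and columns drawn from distinct block-columns preserve their relative order), so $A'$ is itself $P$-free and $\|A'\|_1 \leq f(n/k^2)$. In particular the number of small blocks is at most $f(n/k^2)$, so their total contribution to the weight of $A$ is at most $(k-1)^2 f(n/k^2)$.

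The combinatorial heart is to show that each block-row contains $O_k(1)$ wide blocks (and symmetrically each block-column $O_k(1)$ tall blocks). The plan is to assign each wide block a \emph{type} encoding its set of $k$ 1-containing column offsets $o_1 < \cdots < o_k$ together with the row offsets $r_1,\dots,r_k$ of selected representative 1-entries, chosen so that the $r_i$ are pairwise distinct. Since there are only $O_k(1)$ types, too many wide blocks in one block-row forces $k$ of them, say $B_1 < \cdots < B_k$, to share a type. One then extracts a copy of $P$ as follows: sort $r_{\sigma(1)} < \cdots < r_{\sigma(k)}$ for some permutation $\sigma$, take the selected rows $R_i := r_{\sigma(i)}$ (plus the block-row offset), and in the $j$-th shared-type block take the column $C_j$ at offset $o_{\sigma(\pi(j))}$, where $\pi$ is the permutation associated with $P$; the inter-block column ordering guarantees $C_1 < \cdots < C_k$, and the canonical 1 at offset $(r_{\sigma(\pi(j))}, o_{\sigma(\pi(j))})$ of block $B_j$ lies at $(R_{\pi(j)}, C_j)$, producing $P$. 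Combined with the symmetric bound for tall blocks, wide plus tall blocks then contribute $O_k(n)$ ones, closing the recurrence.

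The main obstacle is making the \emph{type} assignment genuinely yield $k$ distinct row offsets. A naive type recording only column offsets fails, because $k$ same-type wide blocks could have all their 1-entries crammed into a single row, leaving no distinct-row witnesses. Enriching the type with row offsets is the natural fix, but requires first showing that every wide block admits $k$ representative 1-entries at pairwise distinct rows, one per chosen column -- a Hall-condition problem that need not be solvable by a generic wide block. Overcoming this requires either carving off the wide blocks for which Hall's condition fails (and absorbing their 1-entries into the linear term via a separate count, exploiting that failure forces their 1-entries to be covered by few rows and columns) or an iterative selection procedure. Crucially, the whole construction hinges on $P$ being a \emph{permutation} (exactly one 1 per row and per column); for general acyclic patterns the analogous pigeonhole-to-pattern step breaks down, which is precisely what opens the door to the super-linear bounds exhibited later in the paper for $S_0$, $S_1$, and $P_t$.
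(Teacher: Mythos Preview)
The paper does not itself prove this statement; it records it as the \Furedi--Hajnal conjecture and cites Marcus and Tardos~\cite{MarcusT04} for the proof. Your overall framework --- $k^2 \times k^2$ block decomposition, contraction to a $P$-free $A'$, and the recurrence $f(n) \le (k-1)^2 f(n/k^2) + O_k(n)$ --- is exactly the Marcus--Tardos strategy, and the treatment of the small blocks is correct.

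The gap is that your central combinatorial claim, ``each block-row contains $O_k(1)$ wide blocks,'' is \emph{false}, so the obstacle you identify in your last paragraph is not a technicality but a genuine wrong turn. Let $A$ be the $n \times n$ matrix whose first $k-1$ rows are all $1$s and whose remaining rows are $0$; this avoids every $k\times k$ permutation (only $k-1$ non-zero rows exist), yet all $n/k^2$ blocks in the first block-row are wide, and every one of them fails your Hall condition. The fix is a one-word swap: bound wide blocks per block-\emph{column} and tall blocks per block-\emph{row}. Blocks in a fixed block-column share the same $k^2$ ambient columns, so pigeonhole directly on the $k$-subset of occupied column offsets; if $k$ wide blocks (top to bottom) share a subset $\{o_1<\cdots<o_k\}$, picking any $1$ at column offset $o_{\pi(i)}$ in the $i$-th block yields a copy of $P$, because row distinctness and row ordering come \emph{for free} from the blocks lying in distinct block-rows. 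This gives at most $(k-1)\binom{k^2}{k}$ wide blocks per block-column, with no transversal or Hall's condition needed --- the inter-block row separation does exactly the job you were asking the distinct $r_i$'s to do.
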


\begin{conjecture}[\cite{FurediH92}]\label{conj:FH-logn}
For \underline{\emph{any}} $P$, $\Ex(P,n)=O(\ExT(G(P),n)\cdot \log n)$.
\end{conjecture}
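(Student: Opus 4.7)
The plan is to mimic the Marcus--Tardos recursive partitioning strategy that resolves the permutation case (Conjecture~\ref{conj:FH-perm}), extending it to general patterns at the cost of a single logarithmic factor. Let $P \in \{0,1\}^{k \times l}$ and let $A \in \{0,1\}^{n \times n}$ be a $P$-free matrix of maximum weight. The first step is to partition the columns of $A$ into $n/l$ consecutive blocks of width $l$ (and symmetrically the rows into blocks of height $k$), and to classify each $1$ as either \emph{local} --- the only $1$ of its row or column inside its $k \times l$ block --- or \emph{crossing} otherwise. The hope is that crossing $1$s can be bounded by $O(\ExT(G(P), n))$ per level of recursion, while local $1$s submit to a recursion that halves the effective dimensions and terminates after $O(\log n)$ levels.

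To count crossing $1$s, I would contract each column block to a single representative column by taking a coordinate-wise OR, producing an $n \times (n/l)$ matrix $A'$. The central claim to aim for is that $A'$, viewed as an unordered bipartite graph, is $G(P)$-free. The intuition is that the multiset of rows hit in each block must be structured enough that an unordered copy of $G(P)$ spanning $l$ blocks of $A'$ can be \emph{lifted} to an ordered copy of $P$ in $A$ by choosing, for each column of $P$, a concrete column inside the corresponding block. Granting the lifting, $\|A'\|_1 \le \ExT(G(P), 2n)$, and since each $1$ of $A'$ represents at most $O(l)$ crossing $1$s in $A$, we obtain $O(\ExT(G(P), n))$ crossing $1$s at this level. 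An analogous row-contraction bounds the row-crossings, while the remaining local $1$s are bounded by recursion on the $(n/k) \times (n/l)$ grid of smaller blocks. Telescoping over $O(\log n)$ levels matches the conjectured bound.

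The step I expect to collapse is precisely the lifting claim. An unordered embedding of $G(P)$ into $A'$ assigns each vertex on the column side of $G(P)$ to a block, but the internal left-to-right ordering inside each block is what makes the pattern $P$ (as opposed to $G(P)$) genuinely restrictive. An adversary can arrange the $1$s inside blocks so that every attempted lift of an unordered $G(P)$ in $A'$ fails to match the order prescribed by $P$, even when every row incidence is correct. I do not see a way to repair this gap without replacing the single OR-contraction by a much finer multi-column contraction, which would cost more than a single $\log n$ factor. Given that the present paper refutes the weaker Pach--Tardos conjecture even for acyclic $P$, I expect this obstacle to be fundamental: any proof achieving the conjectured bound must somehow avoid reducing to the purely unordered Tur\'an problem at each level, and no structural alternative with this property is apparent to me.
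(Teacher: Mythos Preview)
The statement you were asked to address is a \emph{conjecture}, not a theorem, and the paper contains no proof of it. On the contrary, the paper records (in the paragraph immediately following the statement of \cref{conj:PachTardos}) that \cref{conj:FH-logn} was already \emph{refuted} by Pach and Tardos~\cite{PachTardos06}: they exhibited, for every $k$, a pattern $D_{2k}$ with $G(D_{2k})=C_{2k}$ and an $n\times n$ matrix $A$ that is $D_{2k}$-free with $\|A\|_1=\Theta(n^{4/3})$, whereas $\ExT(C_{2k},n)=O(n^{1+1/k})$. Taking $k$ large makes the ratio $\Ex(D_{2k},n)/\ExT(G(D_{2k}),n)$ as large as $n^{1/3-\epsilon}$, far exceeding any $\log n$ factor.

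You correctly flag the lifting step---turning an unordered copy of $G(P)$ in the contracted matrix into an ordered copy of $P$ in $A$---as the point of collapse, and it is indeed unrepairable, since the statement is false. Your closing remark invokes the present paper's refutation of the (logically weaker) Pach--Tardos conjecture as indirect evidence; that is fine as heuristic reasoning, but note that \cref{conj:FH-logn} did not need the present paper at all: it had already been disproved nearly two decades earlier, and by counterexamples that are not even acyclic. So there is nothing to compare your attempt to; the right answer here is simply that no proof exists and none can.
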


Perhaps doubting the validity of \cref{conj:FH-logn} \emph{in general}, they asked whether it held at least for acyclic patterns.

\begin{conjecture}[\cite{FurediH92}]\label{conj:FH-acyclic}
For any acyclic $P$, $\Ex(P,n)=O(\ExT(G(P),n)\cdot \log n)=O(n\log n)$.
\end{conjecture}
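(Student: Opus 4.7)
The plan is to attempt the standard divide-and-conquer strategy on column intervals of the host matrix, which is how the $O(n\log n)$ bound was established for $P_1$ and extended to essentially all acyclic weight-$\le 5$ patterns. Let $A\in\{0,1\}^{n\times m}$ be a $P$-free matrix of maximum weight. I would split its columns at the median into halves $L$ and $R$, and classify each row as $L$-only, $R$-only, or \emph{wide} (having 1s on both sides). The $L$-only and $R$-only rows form two $P$-free $n\times(m/2)$ submatrices, contributing at most $2\,\Ex(P,n,m/2)$ 1s. If the wide rows contribute only $O(n+m)$ 1s, the recurrence $\Ex(P,n,m)\le 2\,\Ex(P,n,m/2)+O(n+m)$ unwinds to $O((n+m)\log n)$, which yields the conjectured bound.

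The crux is thus the wide-row bound. The standard attack keeps, within each wide row, a constant number of \emph{pivot} 1s (e.g.\ the rightmost 1 in $L$ and the leftmost 1 in $R$) and charges the remaining 1s via an inductive argument that uses a structural decomposition of $P$ itself. For a general acyclic $P$ I would attempt a \emph{recursive pattern decomposition}: choose a column $c^{\star}$ of $P$---for instance a centroid of the tree $G(P)$---and let $P_L,P_R$ be the sub-patterns on columns left and right of $c^{\star}$. Both are strictly simpler acyclic patterns, so by induction on the weight of $P$ we would control sub-instances that avoid $P_L$ or $P_R$. The plan is then to argue that on each wide row the 1s in $L$ realize a copy of $P_L$, the 1s in $R$ realize a copy of $P_R$, and the two copies can be stitched together at the pivot columns to embed $P$---forcing each wide row to carry at most $O(1)$ extra 1s beyond its pivots.

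The main obstacle is precisely this stitching step. A row of $P$ can contain 1s in both $P_L$ and $P_R$, so embedding $P$ into $A$ requires a \emph{globally row-consistent} matching of a $P_L$-embedding on the left with a $P_R$-embedding on the right. For patterns like $P_1$ every row of $P$ touches only one side of $c^{\star}$ and the matching is automatic, but in general the rows of $P$ may straddle every reasonable choice of $c^{\star}$, and no local pivot bookkeeping enforces the required global row alignment. The adversarial case is exactly patterns whose every row has 1s on both sides of every column centroid, and $S_0,S_1$ are of precisely this form. I therefore expect any prospective proof of the conjecture to demand a genuinely new global argument on the host $A$ rather than a divide-and-conquer induction; and indeed the main theorem of the present paper suggests that no such argument exists, since $S_0,S_1$ are acyclic yet will be shown to violate the $O(n\log n)$ bound.
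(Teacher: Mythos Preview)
This statement is a \emph{conjecture}, not a theorem, and the paper does not prove it---quite the opposite. \cref{conj:FH-acyclic} was already refuted in~\cite{Pettie-FH11} via an acyclic pattern $X$ with $\Ex(X,n)=\Omega(n\log n\log\log n)$, and the present paper refutes it far more decisively: \cref{thm:PT-refutation} shows $\Ex(S_0,n)\geq n2^{\sqrt{\log n}-O(\log\log n)}$ for the acyclic pattern $S_0$, which kills not only the $O(n\log n)$ bound but any $O(n\log^C n)$ bound. So there is no ``paper's own proof'' to compare against, and any purported proof of the conjecture must contain an error.

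You have, in effect, correctly located where your own argument breaks. Your divide-and-conquer on columns with a pattern-centroid split is exactly the template behind \cref{lem:PachTardos-reductions}, and your diagnosis of the obstruction---that stitching a $P_L$-copy and a $P_R$-copy requires a globally row-consistent alignment that local pivot bookkeeping cannot enforce---is the real reason the reductions of \cref{lem:PachTardos-reductions} do not apply to $S_0$ and $S_1$. What you should take away is that this is not merely a gap awaiting a cleverer stitching lemma: the lower bound construction $A[b,m]$ in \cref{sect:PachTardos-refutation} certifies that \emph{no} argument can close it, because the conjecture is false. Your final paragraph already anticipates this, so the proposal is best read not as a proof attempt but as a correct heuristic explanation of why the known techniques stall on precisely the patterns the paper uses as counterexamples.
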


In 2004, Marcus and Tardos~\cite{MarcusT04} 
proved \cref{conj:FH-perm}, which also proved 
the \emph{Stanley-Wilf conjecture}, 
via a prior reduction of Klazar~\cite{Klazar00}.
This result inspired a line of research that led 
to the definition of the graph parameter \emph{twin width}~\cite{GuillemotM14,BonnetGKTW21,BonnetKTW22}.
Although the leading constant in $\Ex(P,n)=O(n)$ for a $k$-permutation $P$ depends only on $k$, it is \emph{exponentially} larger than the corresponding leading constant of $\ExT(G(P),n)=O(n)$; see Fox~\cite{Fox13} 
and Cibulka and \Kyncl~\cite{CibulkaK17}.

In 2005 Pach and Tardos~\cite{PachTardos06} refuted \cref{conj:FH-logn}.  They provided a matrix with 
$\|A\|_1 = \Theta(n^{4/3})$ that for each $k$, avoids a certain pattern $D_{2k}$ for which $G(D_{2k})=C_{2k}$ is a $2k$-cycle.  
Since $\ExT(C_{2k},n)=O(n^{1+1/k})$~\cite{BondyS74}, this proved that the gap between $\Ex(P,n)$ and $\ExT(G(P),n)$ can be as large as $n^{1/3-\epsilon}$ for any $\epsilon>0$.
This result had no direct effect on \cref{conj:FH-acyclic}, but cast some doubt on its validity.  Before \cref{conj:FH-acyclic} was refuted they stated a more plausible version of it.

\begin{conjecture}[Pach and Tardos~\cite{PachTardos06}]\label{conj:PachTardos}
Let $P$ be an acyclic 0--1 pattern.
\begin{description}
    \item[Weak Version.] $\Ex(P,n) = O(n\log^{C_P} n)$, for \emph{some} constant $C_P$.
    \item[Strong Version.] $\Ex(P,n) = O(n\log^{\|P\|_1-3} n)$.
\end{description}
\end{conjecture}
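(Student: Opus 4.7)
The final statement is a conjecture that the abstract tells us the paper refutes, so ``proving'' it is impossible; instead, my plan is to disprove the weak version (which automatically disproves the strong version). The abstract names $S_0$ and $S_1$ as the outstanding weight-6 acyclic patterns, so they are the natural candidates for counterexamples. My goal would be to construct, for each $n$, an $S_0$-free $n\times n$ matrix of weight $n\cdot 2^{\Omega(\sqrt{\log n})}$, which is $\omega(n\log^C n)$ for every constant $C$, and to do the same for $S_1$ by a parallel argument.

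The target form $n\cdot 2^{\Omega(\sqrt{\log n})}$ is characteristic of a hierarchical construction with $k=\Theta(\sqrt{\log n})$ levels in which the level-$k$ matrix has dimensions $2^{\Theta(k^2)}$ and density $2^{\Theta(k)}$ above linear. I would realize this by an iterated product: start with a small seed matrix $M_0$, and at step $j$ combine $M_{j-1}$ with a small ``amplifier'' $B_j$ via a Kronecker-like or substitution product, producing an $S_0$-free $M_j$ whose dimensions are multiplied by $|B_j|$ and whose density is multiplied by some $c_j>1$. Tuning $|B_j|$ and $c_j$ so that $|B_j|$ grows polynomially with $j$ while $c_j$ is bounded below by a fixed constant $>1$ gives $\log|M_k|=\Theta(k^2)$ and density $2^{\Theta(k)}$, matching the target asymptotics.

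The crux, and the step I expect to be the main obstacle, is designing $B_j$ and proving that the product preserves $S_0$-freeness. Any copy of $S_0$ in $M_j$ occupies three rows and four columns, which project onto at most three row-blocks and four column-blocks of the product structure; this projection induces a partial copy of $S_0$ \emph{between} blocks (sitting inside a submatrix of $M_{j-1}$) together with partial copies \emph{within} individual $B_j$-blocks. For each possible distribution of $S_0$'s six $1$s across block boundaries, I would need to derive a contradiction using either the $S_0$-freeness of $M_{j-1}$ (for the between-block part) or the structure and $S_0$-freeness of $B_j$ (for the within-block parts). Finding $B_j$ that kills every case while contributing density strictly greater than $1$ is the combinatorial heart of the refutation, and because $S_0$ is a 3-by-4 tree with a very specific alternation of column positions, the case analysis is delicate. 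Once the $S_0$-construction works, I expect the $S_1$-bound to follow either by adapting the same amplifier or by a symmetric argument.
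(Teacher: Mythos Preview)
Your proposal correctly identifies the target (refute the conjecture by exhibiting dense $S_0$-free matrices of density $2^{\Omega(\sqrt{\log n})}$), but it stops short of an actual construction: you describe an iterated-product framework and name the crux---designing the amplifier $B_j$ and carrying out the case analysis showing that $S_0$-freeness survives the product---without resolving it. That crux is essentially the entire content of the refutation, so as written the proposal is a plan rather than a proof; nothing in it guarantees such a $B_j$ exists.

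The paper takes a different route and does not use an iterated product at all. It gives a single explicit arithmetic construction $A=A[b,m]$: rows are indexed by $(s,r)\in[m]\times[m]^b$, columns by $(c,i)\in[m]^b\times\{0,1\}^b$ (both ordered lexicographically), with $A((s,r),(c,i))=1$ iff $r=c+s\cdot i$. Taking $m=2^b$ makes $A$ square of side $n=2^{b(b+1)}$ with $\Omega(m/b)=2^{\sqrt{\log n}-O(\log\log n)}$ ones per column on average. The $S_0$-avoidance proof is a short algebraic argument: for a putative copy one forms the difference vectors $x=c_3-c_0$, $y=c_2-c_0$, $z=c_3-c_1$, which satisfy $y\le x$, $z\le x$, $y+z\ge x$ lexicographically; a one-line lemma shows the first nonzero coordinate of each such difference equals the $s$-component of the row realizing it, and the row order forces $s_0<s_1<s_2$, which is incompatible with those three inequalities. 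The same argument with the roles of two rows swapped handles $S_1$. There is no recursion and no block case analysis; everything hinges on the interaction between lexicographic order on $[m]^b$ and the linear relation $r=c+si$. If you want to pursue your product idea, you would need a concrete amplifier and a full case analysis, and it is not clear a priori that any Kronecker-style product preserves $S_0$-freeness while gaining a constant density factor per level.
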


The rationale for the {\bfseries Strong Version}  is that all acyclic $P$ 
with weight 3 are known to be linear~\cite{FurediH92}, and in 
\emph{some circumstances}, adding a row/column containing a single 1 only increases the extremal function by a $\log n$ factor.  In particular, 
Pach and Tardos~\cite{PachTardos06} 
proved the following three reductions for eliminating weight-1 columns.
(In the diagrams, there are no 
constraints on the order of the rows.)

\begin{lemma}[Pach and Tardos~\cite{PachTardos06}]\label{lem:PachTardos-reductions}
Suppose $P$ is obtained from $P'$ (marked by boxes) 
by adding weight-1 columns in the following configurations.\footnote{Formally: 
(A) The last column of $P$ has one 1. 
(B) Column $j$ of $P$ has one 1.  There are 
rows $i_0,i_1$ such that 
$P(i_0,j)=P(i_0,j+1)=P(i_1,j-1)=P(i_1,j+1)=1$.
(C) Columns $j$ and $j+1$ of $P$ have one 1 each.
There are rows $i_0,i_1,i_2$ such that 
$P(i_0,j-1)=P(i_0,j)=P(i_1,j+1)=P(i_1,j+2)=P(i_2,j-1)=P(i_2,j+2)=1$.

Since $P$ and its transpose have the same extremal function, 
reductions (A), (B), and (C) can also be applied to 
remove weight-1 rows. Strictly speaking, part (C) is implied by part (B).}

\bigskip 
\begin{tabular}{ccc}
    $\displaystyle P = \left(\,\begin{array}{|ccc|c}
    \cline{1-3}&&&\\
    &P'&&\bu\\
    &&&\\\cline{1-3}\end{array}\,\right)$
    &
    $\displaystyle P = \left(\,\begin{array}{|ccc|c|ccc|}
    \cline{1-3}\cline{5-7}&&\bu&&\bu&&\\
    &P'&&&&&\\
    &&&\bu&\bu&&\\\cline{1-3}\cline{5-7}\end{array}\,\right)$
    &
    $\displaystyle P = \left(\,\begin{array}{|ccc|cc|ccc|}
    \cline{1-3}\cline{6-8}&&\bu&\bu&&&&\\
    &&\bu&&&\bu&&\\
    $P'$&&&&\bu&\bu&&\\\cline{1-3}\cline{6-8}\end{array}\,\right)$\\
    \istrut[3]{6}\hcm[.7]\emph{(A)} & \hcm\emph{(B)} & \hcm\emph{(C)}
\end{tabular}

Then $\Ex(P,n)$ can be expressed in terms of $\Ex(P',n)$ as follows.
\begin{description}
    \item[(A)] $\Ex(P,n) = O(\Ex(P',n)\log n)$.
    \item[(B)] $\Ex(P,n) = O(\Ex(P',n)\log n)$.
    \item[(C)] $\Ex(P,n) = O(\Ex(P',n)\log^2 n)$.
\end{description}
\end{lemma}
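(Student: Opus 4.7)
The plan is to prove parts (A) and (B) by a column-halving divide and conquer, and to deduce part (C) by applying part (B) twice, as the footnote indicates. Given a $P$-free matrix $A\in\{0,1\}^{n\times m}$, I split the columns at the midpoint into halves $L=[1,m/2]$ and $R=[m/2+1,m]$. The goal is to establish the recurrence
\[
\Ex(P,n,m) \le \Ex(P,n,m/2) + \Ex(P',n,m/2) + O(n),
\]
which, iterated $\log m$ times and specialised to $m=n$, yields $\Ex(P,n) = O(\Ex(P',n)\log n + n\log n) = O(\Ex(P',n)\log n)$.

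The left half $L$ is itself $P$-free and supplies the recursive term $\Ex(P,n,m/2)$. For the right half $R$, I plan to peel off at most $n$ ``boundary'' cells---one per row---and argue that the residual matrix $R^{\flat}$ is $P'$-free, contributing $\Ex(P',n,m/2)+n$. For part (A), where the extra column sits to the right of $P'$ with its unique 1 in a designated row $i^*$, the boundary cell in row $r$ is the rightmost 1 of row $r$ lying in $R$. A hypothetical $P'$-embedding $(\sigma,\tau)$ into $R^{\flat}$ with $r=\sigma(i^*)$ forces the deleted cell $c^*$ of row $r$ to lie strictly right of $\tau(\hat\jmath)$, where $\hat\jmath = \max\{j : P'(i^*,j)=1\}$, since $A(r,\tau(\hat\jmath))=1$ gives $c^* \ge \tau(\hat\jmath)$ and the survival of $\tau(\hat\jmath)$ in $R^{\flat}$ forces $c^*\ne\tau(\hat\jmath)$. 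When $\hat\jmath = l$, $c^*$ extends the $P'$-embedding into a full $P$-embedding of $A$, contradicting $P$-freeness. For part (B), the two anchor rows that flank the added column in $P$ give two candidate deleted cells; one always lies at the required relative position by a short averaging argument that exploits the symmetry of the (B)-configuration. Part (C) is then the composition of two (B)-reductions, producing the $\log^2 n$ overhead.

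The main obstacle is the case $\hat\jmath < l$ in part (A), where the designated row $i^*$ has no 1 in the last column of $P'$. In this case the deleted cell $c^*$ is only guaranteed to lie right of $\tau(\hat\jmath)$, not of $\tau(l)$, so it may fail to serve as the extra 1. I expect the fix to involve a refined boundary-cell selection---for instance, for each row, deleting the rightmost 1 from each of a constant number of sub-slabs of $R$ induced by a secondary partition, so that for every possible $\hat\jmath\le l$ at least one deleted cell falls into the correct relative position. Keeping the per-row deletion count $O(1)$ (hence $O(n)$ total) while verifying that $R^{\flat}$ is $P'$-free uniformly in the embedding will be the delicate bookkeeping. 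The dual-anchor structure of part (B) side-steps this issue via the averaging argument above, and part (C) inherits the clean argument from two nested applications of (B).
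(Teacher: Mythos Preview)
The paper does not supply its own proof of this lemma; it is cited from~\cite{PachTardos06} as background. That aside, your plan for part (A) has a genuine structural gap. Your recurrence requires that the right half $R$, after deleting $O(n)$ cells, be $P'$-free. But $R$ is an \emph{arbitrary} $P$-free $n\times(m/2)$ matrix---any such matrix is the right half of a $P$-free $n\times m$ matrix obtained by zero-padding on the left---so this is equivalent to asserting $\Ex(P,n,m')\le\Ex(P',n,m')+O(n)$ for all $m'$, which would erase the $\log n$ factor altogether and is false (e.g.\ $P=P_1$ has $\Ex(P_1,n)=\Theta(n\log n)$ while the corresponding $P'$ is linear). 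The obstacle you isolate, $\hat\jmath<l$, is a symptom rather than a fixable technicality: no choice of $O(1)$ deletions per row in $R$ alone can work, because your argument never uses the left half at all.

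The remedy is to swap the roles of the halves. Recurse on $R$ (it is $P$-free), and argue that $L$ restricted to the rows possessing at least one $1$ in $R$ is $P'$-free: if $P'$ embeds there with $\sigma(i^*)=r$, then any $1$ of row $r$ in $R$ lies strictly to the right of every column of $L$, hence right of $\tau(l{-}1)$, and extends the embedding to a copy of $P$ in $A$. Rows with no $1$ in $R$ live entirely in $L$ and form a second recursive $P$-free subproblem on a disjoint row set; unrolling over $O(\log m)$ levels and using subadditivity of $\Ex(P',\cdot,m')$ in the row count gives $\Ex(P,n,m)\le O(\log m)\cdot\Ex(P',n,m)+O(n)$. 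Your plan to obtain (C) from two applications of (B) matches the paper's footnote; the (B) sketch itself is too terse to assess.
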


The Pach-Tardos reductions (\cref{lem:PachTardos-reductions})
are sufficient to prove \cref{conj:PachTardos} on all patterns with weight at most 5 and most of weight 6.  For example,
consider the class $(P_t)$ of ``alternating'' patterns
and $R_0,R_1,R_2$.
\begin{align*}
    P_t &= \left(\begin{array}{ccccccc}
\bu&\zero{\hcm[-.2]\rb{4}{$\overbrace{\hcm[3.6]}^{\text{$t+1$ alternating 1s}}$}}\bu &&\bu&&&\bu\\
\bu&&\bu&&\rb{2.4}{$\cdots$}&\bu&
\end{array}\right),
&
    R_0 &= \left(\begin{array}{ccc}
    \bu&\bu\\
    &&\bu\\
    \bu&&\bu\end{array}\right),\\
&\\
    R_1 &= \left(\begin{array}{cccc}
    \bu&\bu\\
    &&\bu&\bu\\
    \bu&&&\bu\end{array}\right),
&
    R_2 &= \left(\begin{array}{cccc}
    \bu&\bu\\
    \bu&&&\bu\\
    &&\bu&\bu\\
\end{array}\right).
\intertext{A $t$-fold application of \cref{lem:PachTardos-reductions}(A) implies $\Ex(P_t,n) = O(n\log^t n)$, 
(B) implies $\Ex(R_0,n) = O(n\log^2 n)$, and
(C) implies $\Ex(R_1,n) = O(n\log^3 n)$ and 
$\Ex(R_2,n) = O(n\log^2 n)$.
However, there are two weight-6 patterns up to rotation/reflection that the Pach-Tardos reductions cannot simplify, namely $S_0$ and $S_1$.}
    S_0 &= \left(\begin{array}{cccc}
        \bu & & \bu & \\
        \bu &&& \bu\\
        &\bu&&\bu
    \end{array}\right),
&
    S_1 &= \left(\begin{array}{cccc}
        \bu &&& \bu\\
        \bu & & \bu & \\
        &\bu&&\bu
    \end{array}\right).
\end{align*}

\subsection{Acyclic Patterns and 
the Status of Conjectures \ref{conj:FH-acyclic}
and \ref{conj:PachTardos}}

In 2010 Pettie~\cite{Pettie-FH11} 
refuted \Furedi{} and Hajnal's \cref{conj:FH-acyclic} 
by exhibiting an acyclic pattern $X$ for which $\Ex(X,n)=\Omega(n\log n\log\log n)$.
\[
X = \left(\begin{array}{ccccc}
	& \bu	&		& \bu	& \bu\\
	&		& \bu 	&		& \bu\\
	& \bu\\
\bu &		&		&		& \bu\end{array}\right).
\]
Park and Shi~\cite{ParkS13} generalized this construction to a class $(X_m)$ of acyclic patterns for which $\Ex(X_m,n)=\Omega(n\log n\log\log n\log\log\log n\cdots\log^{(m)} n)$.  
These results~\cite{Pettie-FH11,ParkS13} 
did not cast any doubt on the Pach-Tardos conjecture (\cref{conj:PachTardos}),
and even left open the possibility that the \Furedi-Hajnal conjecture (\cref{conj:FH-acyclic}) was still \emph{morally true}, 
e.g., if $\Ex(P,n) = O(n\log n\poly(\log\log n))$ for all acyclic $P$.

Essentially no progress has been made on expanding 
Pach and Tardos's repertoire of weight-1-column 
reduction rules (\cref{lem:PachTardos-reductions}) 
in order to put more acyclic matrices in the $n\poly(\log n)$ class.
However, in 2019 \Korandi, Tardos, Tomon, and Weidert~\cite{KorandiTTW19}
developed a new technique for analyzing $S_0,S_1$ and similar matrices.
They defined a pattern $S$ to be \emph{class-$s$ degenerate}
if it can be written $S=\left(\begin{array}{c}S'\\S''\end{array}\right)$, where at most one column has a non-zero intersection with both $S'$ and $S''$,
and $S',S''$ are at most class-$(s-1)$ degenerate; any pattern with a single row is class-0 degenerate.  Here is an example of a class-4 degenerate pattern.  It is decomposed into individual rows by sequentially making horizontal cuts, each one cutting one 
vertical line segment joining 1s in the same column.\footnote{We could also define degeneracy w.r.t. vertical cuts, i.e., $S$ is class-$s$ degenerate
if $S=(S'\, S'')$, where $S',S''$ have at most one non-zero row in common and are at most class-$(s-1)$ degenerate.  However, the \Korandi{} et al.~\cite{KorandiTTW19} method does not permit decomposing a pattern with \emph{both} vertical and horizontal cuts.}
\[
\left(\begin{array}{llllll}\;
\bu\zero{\rb{0.7}{\rule{2.5cm}{0.3mm}}}&&&&&\bu\\
&\bu\zero{\rb{0.7}{\rule{1.4cm}{0.3mm}}}&&&\bu&\zero{\hcm[-.06]\rb{2}{\AC}}\\
&&\bu\zero{\rb{0.7}{\rule{.33cm}{0.3mm}}}&\bu&\zero{\hcm[-.06]\rb{2}{\AC}}&\\
&\zero{\hcm[-.06]\rb{-2}{\AC}}&\bu\zero{\hcm[-.22]\rb{2.4}{\AC}}\zero{\hcm[-.11]\rb{2}{\rotatebox{90}{\rule{.27cm}{0.3mm}}}}\zero{\rb{0.7}{\rule{.85cm}{0.3mm}}}&&\bu\zero{\hcm[-.11]\rb{2}{\rotatebox{90}{\rule{0.75cm}{0.3mm}}}}&\\
&\bu\zero{\hcm[-.11]\rb{2}{\rotatebox{90}{\rule{1.2cm}{0.3mm}}}}\zero{\rb{0.7}{\rule{1.95cm}{0.3mm}}}&&&&\bu\zero{\hcm[-.11]\rb{2}{\rotatebox{90}{\rule{1.7cm}{0.3mm}}}}\;
\end{array}\right)
\]

They proved that every class-$s$ degenerate $S$ has
\[
\Ex(S,n) \leq n\cdot 2^{O(\log^{1-\frac{1}{s+1}} n)}=n^{1+o(1)}.
\]
As a consequence, $\Ex(S_0,n),\Ex(S_1,n) \leq n2^{O(\log^{2/3} n)}$,
and by being more careful with the analysis of $S_0$, they proved $\Ex(S_0,n)\leq n2^{O(\sqrt{\log n})}$.  The results of \Korandi{} et al.~\cite{KorandiTTW19} did not directly challenge the Pach-Tardos conjecture (\cref{conj:PachTardos}), and the authors characterized
their results as taking a step towards 
\emph{affirming} \cref{conj:PachTardos}.

Pettie and Tardos~\cite{PettieT24} 
introduced a class of matrices $(A_t)$ such that 
$A_t$ is $B_t$-free and
$\|A_t\|_1 = \Theta(n(\log n/\log\log n)^t)$.
The \emph{box} pattern $B_t$ is a $2t \times (2t+1)$ matrix, 
where the first and last rows form a reflection of $P_{t+1}$
and the second and last columns form a 
rotation of $P_t$.

\[
B_t = \left(\begin{array}{ccccccccc}
&\bu &&\bu&\cdots&\bu&&\bu&\bu\\
&&&&&&&&\bu\\
&\bu&&&&&&&\\
&&&&&&&&\bu\\
&\bu&&&&&&&\vdots\\
&\vdots&&&&&&&\bu\\
&\bu&&&&&&&\\
\bu&&\bu&\cdots&\bu&&\bu&&\bu
\end{array}\right).
\]
Hence
\[
\Ex(B_t,n) = \left\{\begin{array}{l}
\Omega(n(\log n/\log\log n)^t) \\
O(n\log^{4t-3} n),
\end{array}\right.
\]
where the upper bound follows from iterated application of \cref{lem:PachTardos-reductions}(B).  The Pettie-Tardos~\cite{PettieT24} 
lower bounds are the highest obtainable lower bounds that are \emph{consistent} with the Pach-Tardos conjecture.

\subsection{Extensions and Variants the Pach-Tardos Conjecture}

\Furedi, Jiang, Kostochka, Mubayi, and Verstra\"ete~\cite{FurediJKMV21} studied forbidden
patterns in ordered $r$-uniform hypergraphs.  They
made a conjecture extending \cref{conj:PachTardos}.

\begin{conjecture}[{\cite[Conjecture B]{FurediJKMV21}}]\label{conj:Furedietal-ordered-hypergraph}
Let $F$ be any $r$-uniform forest with interval chromatic number $r$.  Then the maximum number of edges in a vertex-ordered $r$-uniform hypergraph with no subgraph order-isomorphic to $F$
is $O(n^{r-1}\log^{c(F)} n)$, 
for some constant $c(F)$.
\end{conjecture}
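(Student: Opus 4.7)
The plan is to refute \cref{conj:Furedietal-ordered-hypergraph}, since its $r=2$ specialization is literally the weak form of the Pach-Tardos conjecture (\cref{conj:PachTardos}), which the main theorem of this paper refutes.

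The first step is to make the dictionary explicit. A 2-uniform forest with interval chromatic number $2$ is an ordered bipartite forest: it is $G(P)$ for some acyclic 0--1 pattern $P$, together with the row/column bipartition. Dually, a vertex-ordered 2-uniform hypergraph on $2n$ vertices split into two consecutive intervals of size $n$ is precisely an $n \times n$ 0--1 matrix, with edges corresponding to 1-entries and edge count equal to weight; under this correspondence ``$H$ contains an order-isomorphic copy of $F$'' is exactly ``$A$ contains $P$'' in the sense of $\prec$. Consequently, \cref{conj:Furedietal-ordered-hypergraph} applied with $r=2$ and $F=G(S_0)$ asserts $\Ex(S_0,n) = O(n \log^{c(F)} n)$ for some constant $c(F)$. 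The main theorem of this paper asserts $\Ex(S_0,n) \geq n \cdot 2^{\Omega(\sqrt{\log n})}$, which eventually dominates $n \log^{C} n$ for every fixed $C$ since $\sqrt{\log n} - C \log \log n \to \infty$; hence no $c(F)$ suffices, refuting the $r=2$ case and therefore the conjecture.

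For $r \geq 3$, the natural follow-up is to lift the 2-uniform counterexample to one of matching strength on $r$-uniform hypergraphs. Given an $S_0$-free matrix $A$ of weight $n \cdot 2^{\Omega(\sqrt{\log n})}$, I would partition $rn$ ordered vertices into intervals $V_1,\ldots,V_r$ of size $n$, and include the $r$-uniform edge $\{u_1,\ldots,u_r\}$ (with $u_i \in V_i$) whenever the pair $(u_1,u_r)$ indexes a 1 of $A$; this yields $n^{r-1} \cdot 2^{\Omega(\sqrt{\log n})}$ edges on $\Theta(n)$ vertices. Take $F'$ to be $G(S_0)$ with $r-2$ fresh leaves attached to each edge, one in each interior class, which remains a forest with interval chromatic number exactly $r$. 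The main obstacle is the soundness check: any order-isomorphic copy of $F'$ in the lift must force an order-isomorphic copy of $G(S_0)$ in $A$. This is plausible because the interior padding leaves are free to map to arbitrary distinct vertices of their interval classes, of which there are always enough, so the embedding of $F'$ reduces to embedding $G(S_0)$ in the bipartite graph read off from $A$; executing this reduction cleanly, and matching the correct definition of ``$r$-uniform forest'' used in \cite{FurediJKMV21}, is the step that requires the most care.
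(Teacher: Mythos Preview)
Your $r=2$ argument is exactly the paper's: the case $r=2$ of \cref{conj:Furedietal-ordered-hypergraph} is the weak Pach--Tardos conjecture, and \cref{thm:PT-refutation} refutes it via $S_0$. For $r\ge 3$ your lift is correct but differs from the paper's. You inflate each edge of $G(S_0)$ with $r-2$ \emph{fresh} degree-one vertices (one per interior class), obtaining a forest $F'$ on $6r-5$ vertices, and host it in the $r$-partite hypergraph whose edges are all $\{u_1,\ldots,u_r\}$ with $A(u_1,u_r)=1$; the soundness check you flag does go through, since every edge of the host is a transversal of $V_1,\ldots,V_r$, so any ordered embedding of $F'$ sends class $i$ into $V_i$, and projecting to classes $1$ and $r$ recovers an ordered copy of $S_0$ in $A$. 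The paper instead takes $S_0^r\in\{0,1\}^{3\times 4\times 1\times\cdots\times 1}$, whose associated hypergraph $H(S_0^r)$ has only $r+5$ vertices with the interior classes \emph{singletons} (all six hyperedges share the same middle vertices), and hosts it in $A^r$ defined by $A^r(i_1,\ldots,i_r)=A(i_1,i_2)$; avoidance then reduces to ``restrict to a 2D slice,'' which is literally $A$. Both routes yield the same $n^{r-1}2^{\sqrt{\log n}-O(\log\log n)}$ lower bound; the paper's choice is shorter and sidesteps the two verifications you correctly identified as needing care (that $F'$ is a hypergraph forest in the peeling sense, and that ordered embeddings respect the interval partition), while your version has the mild conceptual advantage that the interior vertices of $F'$ are genuine leaves rather than high-degree apices.
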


One can think of \cref{conj:Furedietal-ordered-hypergraph} as a collection of separate conjectures for each $r\geq 2$. 
For $r=2$ we get back an equivalent form of~\cref{conj:PachTardos}; see~\cite[Theorem 2]{PachTardos06}.
%It is straightforward to prove the following dependency 
%between them; see \cref{appendix:multidimensional}.

%\begin{theorem}\label{thm:Furedietal-conj}
%If \cref{conj:Furedietal-ordered-hypergraph} is 
%false for some $r$, 
%it is false for every $r'\geq r$.
%\end{theorem}

Shapira and Yuster~\cite{ShapiraY17} considered an extremal problem on augmented tournaments.  A \emph{tournament} is a complete graph with ${n\choose 2}$ edges, each of which is assigned some direction.  
A $t$-augmented tournament has $t$ extra directed edges, i.e., $t$ pairs of vertices $\{u,v\}$ have both edges $(u,v),(v,u)$.  
Shapira and Yuster defined $t(n,H)$ to be the minimum number such that
\emph{any} $n$-vertex, $t(n,H)$-augmented tournament contains a 
subgraph isomorphic to the tournament $H$.  
They defined a notion of ``tournament forest'' and made an analogue of the Pach-Tardos conjecture.

\begin{conjecture}[{\cite[Conjecture 1]{ShapiraY17}}]\label{conj:Shapira-Yuster}
For any tournament forest $H$ there exists a constant $c_H$
such that $t(n,H) = O(n\log^{c_H} n)$.
\end{conjecture}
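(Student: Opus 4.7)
The plan is to refute \cref{conj:Shapira-Yuster} by transferring the super-polylogarithmic lower bound on $\Ex(S_0,n)$ and $\Ex(S_1,n)$ from the 0--1 matrix setting to the augmented tournament setting. The strategy mirrors the paper's refutation of the Pach-Tardos conjecture: rather than attempt to prove the conjectural upper bound, I would exhibit a specific tournament forest $H$ such that $t(n,H) \geq n 2^{\Omega(\sqrt{\log n})}$.

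First, I would build a reduction from $S_i$-avoiding matrices to $H$-avoiding augmented tournaments. Given a matrix $A \in \{0,1\}^{n\times n}$, form a tournament on $2n$ vertices split into a ``row block'' $R = \{r_1,\ldots,r_n\}$ and a ``column block'' $C = \{c_1,\ldots,c_n\}$, with each block internally oriented by index. Between $R$ and $C$, give each pair a default orientation (say $r_i \to c_j$) and \emph{augment} it with the reverse edge $c_j \to r_i$ whenever $A(i,j) = 1$. The number of augmentations is then exactly $\|A\|_1$, so if $A$ is $S_i$-free and dense, the resulting augmented tournament is dense in augmented edges.

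Second, I would design the target $H$ as a small tournament (on $3+4 = 7$ vertices for $S_0$ or $S_1$) whose augmented pairs form the bipartite backbone $G(S_i)$, with internal $R$-part and $C$-part edges oriented to follow the row and column order of $S_i$. The goal is to arrange things so that every embedding of $H$ into the augmented tournament forces the three ``row vertices'' of $H$ into $R$ and the four ``column vertices'' into $C$ in order, thereby recovering a genuine $S_i$-copy in $A$. If this correspondence succeeds and $H$ qualifies as a tournament forest under the Shapira-Yuster definition, then the bounds $\Ex(S_i,n) \geq n 2^{\Omega(\sqrt{\log n})}$ proved in this paper yield $t(2n, H) \geq n 2^{\Omega(\sqrt{\log n})}$, which is $\omega(n\log^{c} n)$ for every constant $c$, refuting \cref{conj:Shapira-Yuster}.

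The main obstacle is the structural verification: showing that the seven-vertex $H$ above is genuinely a \emph{tournament forest} in the precise sense of Shapira and Yuster. The bipartite backbone of $S_0$ and $S_1$ is a forest, but the extra within-block edges could introduce a cycle in the tournament's underlying structure, potentially disqualifying $H$. A fallback, if $H$ fails the forest test, is to sparsify or subdivide the backbone of $S_i$ to produce a variant pattern whose associated tournament is provably a forest, while keeping the matrix-side lower bound strong enough to remain $\omega(n\log^c n)$ for every $c$. A secondary issue is the possibility of spurious $H$-copies arising from unintended orderings of within-block edges; this should be addressable either by choosing the within-block orientation adversarially against such embeddings, or by a probabilistic orientation combined with a union bound.
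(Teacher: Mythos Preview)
Your plan is a reasonable strategy, but it is not what the paper does, and it is more laborious than necessary. The paper refutes \cref{conj:Shapira-Yuster} in one line: it invokes \cref{thm:Shapira-Yuster} (Shapira and Yuster's own result that \cref{conj:PachTardos} and \cref{conj:Shapira-Yuster} are equivalent), and then points to \cref{thm:PT-refutation}, which refutes \cref{conj:PachTardos}. No explicit tournament $H$ is ever constructed; the equivalence theorem is a black box.

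What you are proposing is, in effect, to reprove by hand the direction of the Shapira--Yuster equivalence that converts a 0--1 matrix counterexample into a tournament counterexample, specialized to the single pattern $S_0$ (or $S_1$). The two obstacles you flag---verifying that the seven-vertex $H$ is a tournament forest in the Shapira--Yuster sense, and ruling out spurious copies of $H$ that do not respect the row/column bipartition---are precisely the content of that direction of their equivalence proof. So your approach is not wrong in spirit, but it duplicates work already packaged in \cref{thm:Shapira-Yuster}, and the ``fallback'' you mention (sparsifying or subdividing $S_i$) would require reproving the matrix lower bound for a modified pattern, which is a substantial detour. If you want a self-contained argument, the cleaner path is to read off from \cite{ShapiraY17} the explicit tournament forest they associate to an acyclic 0--1 pattern and quote their lemma that guarantees the reduction goes through; there is no need to improvise a new construction or orientation scheme.
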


Moreover, they proved that \cref{conj:Shapira-Yuster} is equivalent to the Pach-Tardos conjecture.

\begin{theorem}[{Shapira and Yuster~\cite[Theorem 1]{ShapiraY17}}]\label{thm:Shapira-Yuster}
\cref{conj:PachTardos,conj:Shapira-Yuster} are equivalent.
\end{theorem}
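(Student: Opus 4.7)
The plan is to establish an explicit dictionary between 0--1 matrices and augmented tournaments that identifies matrix weight with augmentation count and acyclic patterns with tournament forests, then to deduce the equivalence of the two conjectures up to polylog factors. To an $n\times m$ 0--1 matrix $A$ with ordered rows and columns we associate the augmented tournament $T(A)$ on the linearly ordered vertex set $r_1<\cdots<r_n<c_1<\cdots<c_m$ whose base is the transitive tournament and whose $\|A\|_1$ augmentation pairs are exactly the row--column pairs $(r_i,c_j)$ with $A(i,j)=1$. Containment is preserved: $P\prec A$ is equivalent to an order-preserving embedding of $T(P)$ into $T(A)$ that maps row-vertices to row-vertices and column-vertices to column-vertices. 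When $P$ is acyclic, $T(P)$ qualifies as a tournament forest (its augmentations form a forest), which is the bridge between the two conjectures.

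For \cref{conj:Shapira-Yuster}$\Rightarrow$\cref{conj:PachTardos}: given an acyclic pattern $P\in\{0,1\}^{k\times l}$, let $H_P := T(P)$. A $P$-free matrix $M\in\{0,1\}^{n\times n}$ gives an $H_P$-free augmented tournament on $2n$ vertices with $\|M\|_1$ augmentations, since any embedded copy of $H_P$ would project to a copy of $P$ in $M$. Hence $\Ex(P,n)\le t(2n,H_P)=O(n\log^{c_{H_P}} n)$.

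For the converse \cref{conj:PachTardos}$\Rightarrow$\cref{conj:Shapira-Yuster}: given a tournament forest $H$ on $s$ vertices, enumerate the $2^s$ ways to split $[s]=S\sqcup([s]\setminus S)$ into a ``row block'' and a ``column block.'' For each split, the augmentation edges of $H$ crossing $S$ form an acyclic bipartite pattern $P_H^{(S)}$, as they are a subgraph of $H$'s augmentation forest. Proceed by divide-and-conquer on an $H$-free augmented tournament $T$ on $[n]$: split $[n]$ in half contiguously as $L\sqcup R$, observe that the augmentations of $T$ crossing this split form a matrix that must avoid every $P_H^{(S)}$, otherwise one could reassemble a copy of $H$ in $T$, and recurse on $L$ and $R$. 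With $C=\max_S c_{P_H^{(S)}}$, the hypothesized $O(n\log^C n)$ bound at each recursion level gives $t(n,H)=O(n\log^{C+1} n)$.

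The main obstacle lies in the converse direction. The delicate reassembly step requires showing that a copy of some $P_H^{(S)}$ in the crossing matrix --- combined with $T$'s transitive backbone and the augmentations inherited from $T$ --- actually reconstitutes a copy of $H$ matching the vertex bipartition $S\sqcup([s]\setminus S)$. Since $s$ is constant, the $2^s$-fold union bound over splittings $S$ is absorbed into the hidden constant, but the reassembly itself relies crucially on the tournament-forest hypothesis, which guarantees that every bipartite restriction of $H$'s augmentation graph is itself acyclic, so that each level of the recursion incurs only a single extra logarithmic factor rather than a super-polylogarithmic blow-up.
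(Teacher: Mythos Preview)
First, note that this theorem is not proved in the present paper; it is quoted from Shapira and Yuster~\cite{ShapiraY17}, so there is no in-paper proof to compare against. What follows is an assessment of your sketch on its own merits.

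In the forward direction, your $H_P := T(P)$ is an \emph{augmented} tournament, whereas $t(n,H)$ is defined only for a tournament $H$ (exactly one arc per pair). Even if you repair this by letting $H_P$ be the tournament obtained from the transitive order on $r_1<\cdots<r_k<c_1<\cdots<c_l$ by reversing the arcs $(r_i,c_j)$ with $P(i,j)=1$, the assertion that ``an embedded copy of $H_P$ projects to a copy of $P$ in $M$'' does not go through: subgraph containment of a tournament in an augmented tournament is \emph{unordered}. An embedding $\varphi$ of $H_P$ into $T(M)$ need not be order-preserving and need not send row-vertices to row-vertices; a forward arc of $H_P$ can land on an augmentation of $T(M)$ and vice versa. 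Shapira and Yuster's dictionary and their notion of ``tournament forest'' are set up precisely to control this mismatch; it is not a one-line observation, and your sketch does not engage with it.

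The converse direction has a more basic gap, which you correctly flag but do not resolve. Your key claim is that the crossing matrix of an $H$-free $T$ must avoid every $P_H^{(S)}$. But locating the crossing augmentations of $H$ across a cut says nothing about the arcs of $H$ that lie entirely within one side. If $H$ restricted to $S$ is non-transitive, there is no reason the chosen $L$-vertices realize that structure inside $T$; the full crossing pattern can be present while $T$ still fails to contain $H$. So the implication ``$P_H^{(S)}$ appears $\Rightarrow$ $H$ appears'' is simply false in general, and the tournament-forest hypothesis does not by itself supply the missing within-block arcs. The argument in~\cite{ShapiraY17} uses a different reduction that accounts for all of $H$, not just its bipartite shadows across a single cut; a naive halving recursion of the kind you propose cannot be completed.
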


\subsection{New Results}\label{sec:newresults}

Our main result is a \emph{refutation} of the Pach-Tardos conjecture (\cref{conj:PachTardos}) in both its \emph{weak} and \emph{strong} forms. As stated above, the Pach-Tardos conjecture is equivalent to both the $r=2$ case of the F\"uredi et al.\ conjecture
(\cref{conj:Furedietal-ordered-hypergraph}) and the Shapira-Yuster conjecture (\cref{conj:Shapira-Yuster}) 
so both of the latter conjectures are also refuted. 
It is straightforward to modify the counterexample
of the $r=2$ case of \cref{conj:Furedietal-ordered-hypergraph} (a graph being a 2-uniform hypergraph)
to obtain counterexamples for any $r>2$;
see \cref{appendix:multidimensional}.

Specifically, we prove that the two weight-6 patterns $S_0,S_1$ 
not subject to the Pach-Tardos reductions (\cref{lem:PachTardos-reductions}) 
do not have $n\poly(\log n)$ extremal functions. Here and throughout the paper $\log$ stands for the binary logarithm.

\begin{restatable}{theorem}{mainthmPT}\label{thm:PT-refutation}
    $\Ex(S_0,n), \Ex(S_1,n) \geq  n2^{\sqrt{\log n}-O(\log\log n)}$.
\end{restatable}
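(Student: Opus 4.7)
The plan is to give a recursive lower-bound construction. For each $k\geq 0$, I would build an $S_0$-free 0--1 matrix $M_k$ of dimension $N_k\times N_k$ and weight $W_k$, chosen so that $\log N_k=\Theta(k^2)$ and $W_k/N_k\geq 2^{k}/\poly(k)$. Setting $k=\lfloor\sqrt{\log n}\rfloor$ and padding $M_k$ (by empty rows and columns, or by tiling several copies side by side) to an $n\times n$ matrix then yields the claimed bound $\Ex(S_0,n)\geq n\cdot 2^{\sqrt{\log n}-O(\log\log n)}$; the same outline, with a reflected or slightly modified skeleton, handles $S_1$.

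The construction of $M_k$ proceeds by substituting a scaled copy of $M_{k-1}$ into each 1 of a ``skeleton'' matrix $T_k$ of dimension roughly $2^{2k}\times 2^{2k}$ and density very close to $2$. Because substitution into a density-$d$ skeleton multiplies both dimension and weight by the skeleton parameters, the weight-to-dimension ratio of $M_k$ is exactly the product of the densities of $T_1,\dots,T_k$, which gives $2^{k}/\poly(k)$. The skeleton $T_k$ must be chosen so that (i)~$T_k$ itself is $S_0$-free, and (ii)~no ``shadow'' configuration of $S_0$ appears --- by a shadow I mean a placement of three rows and two columns of $T_k$ whose 1-entries could be completed, by choosing single entries from (up to three) distinct copies of $M_{k-1}$, into a copy of $S_0$ in the substituted matrix. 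A natural candidate is a controlled staircase-like matrix in the spirit of the Pettie--Tardos box construction from~\cite{PettieT24}, tuned to the correct density.

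Verification of $S_0$-freeness of $M_k$ is by induction on $k$. Any hypothetical occurrence of $S_0$ in $M_k$ consists of six 1s distributed among the installed copies of $M_{k-1}$. If all six 1s live in a single copy, the inductive hypothesis gives a contradiction. Otherwise, one classifies how the three rows and four columns of the $S_0$ occurrence split among the horizontal and vertical bands of $T_k$; each such split produces one of a short list of shadow configurations in $T_k$, and by design $T_k$ forbids every one. A symmetric argument handles $S_1$, which differs from $S_0$ only by swapping two entries in a single column and should be compatible with the same skeleton family (or a small modification of it).

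The hardest step is the design of $T_k$ together with the verification that no cross-copy shadow of $S_0$ arises. The pattern $S_0$ is subtle because each of its weight-2 columns spans two \emph{non-consecutive} rows, so long vertical alignments of 1s across many horizontal bands of $T_k$ are what must be suppressed; simultaneously $T_k$ must maintain density $\approx 2$ and dimension only $2^{\Theta(k)}$, which forces the skeleton to be both sparse and structurally rigid. Balancing these requirements --- producing a $T_k$ whose 1-columns are long enough to sustain density $2$ yet staggered enough that no $S_0$-shadow forms --- is where the construction earns its $2^{\sqrt{\log n}}$ exponent, and where the $\poly(k)=2^{O(\log\log n)}$ slack in the bound is paid.
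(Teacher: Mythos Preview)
Your proposal is an outline, not a proof, and the part you yourself flag as ``the hardest step'' is precisely the part that is missing.  You never construct the skeletons $T_k$; you only assert that ``a natural candidate is a controlled staircase-like matrix in the spirit of~\cite{PettieT24}'' and that it must avoid an unspecified list of shadow configurations.  But this is exactly where the difficulty lies.  With average density close to $2$, most columns of $T_k$ carry at least two 1s, and after substitution those columns become vertical bands containing two full copies of $M_{k-1}$.  Any column of $M_{k-1}$ that already contains two 1s then yields, in $M_k$, a single column with 1s in four distinct row-blocks---more than enough vertical alignment to supply the weight-$2$ columns of $S_0$.  Ruling out every way such alignments can be completed to an $S_0$ across blocks is a nontrivial structural requirement on $T_k$, and nothing you have written indicates how to meet it (or even that it can be met) while keeping the dimension of $T_k$ at $2^{\Theta(k)}$.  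Until an explicit $T_k$ is given and the full shadow case analysis carried out, this is a research plan with the central obstacle unresolved.

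For comparison, the paper does not use recursion or substitution at all.  It gives a single explicit matrix $A=A[b,m]$ with rows indexed by $[m]\times[m]^b$ and columns by $[m]^b\times\{0,1\}^b$, setting $A((s,r),(c,i))=1$ iff $r=c+s\cdot i$.  With $m=2^b$ this is an $n\times n$ matrix, $n=2^{b^2+b}$, with $\Omega(m/b)$ 1s per column.  The argument that $A$ avoids $S_0$ (and $S_1$) is a short lexicographic computation: writing the column differences $x=c_3-c_0$, $y=c_2-c_0$, $z=c_3-c_1$, one has $y,z\le x$ and $y+z\ge x$, which forces the first nonzero coordinates of $x,y,z$ to satisfy an inequality that contradicts $s_0<s_1<s_2$ (the row labels coming from \cref{lem:A-simple-properties}).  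The same matrix handles $S_0$ and $S_1$ simultaneously; no separate reflected construction is needed.  If you want to pursue a recursive route, you would have to produce a concrete $T_k$ and a complete shadow analysis before it could be compared to this direct argument.
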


\cref{thm:PT-refutation} matches \Korandi{} et al.'s~\cite{KorandiTTW19} upper bound 
$\Ex(S_0,n) \leq n2^{O(\sqrt{\log n})}$, 
up to the hidden constant in the exponent, 
which happens to be $4$ in the upper bound 
rather than the $1$ in the lower bound.
We extend Theorem~\ref{thm:PT-refutation} in two directions. 
First, we show that the matrices constructed for the proof of this theorem avoid a large class of matrices beyond $S_0$ and $S_1$; 
see Theorem~\ref{thm:A-avoids-more}. Second, we modify the construction to increase its weight to $n2^{C\sqrt{\log n}}$ for any desired constant $C\geq 1$ and show that the matrices
obtained still avoid some acyclic patterns;
see Theorem~\ref{thm:denser-construction}.

\medskip 

We must admit that we \emph{did not} 
specifically set out to disprove the Pach-Tardos conjecture.  
Our initial aim was simply to better understand variations on 
the construction of~\cite{PettieT24}, and to understand 
simple, structured patterns like the $(P_t)$ class.
This effort was also very successful.

\begin{theorem}\label{thm:Pt-upper-lower-bound}
    For every $t\ge 2$, $\Ex(P_t,n) = \Theta(n(\log n/\log\log n)^t)$.
\end{theorem}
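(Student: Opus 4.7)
For the lower bound $\Ex(P_t,n) = \Omega(n(\log n/\log\log n)^t)$, my plan is to exhibit a recursive construction $A_t$ of $n\times n$ matrices that are $P_t$-free and carry $\Omega(n(\log n/\log\log n)^t)$ ones, closely modeled on the Pettie--Tardos construction~\cite{PettieT24} used to lower-bound $\Ex(B_t,n)$. I would lay out $k = \Theta(\log n/\log\log n)$ scaled copies of $A_{t-1}$ along the diagonal and then insert a single ``alternating layer'' of 1s that couples adjacent sub-blocks and boosts the weight by roughly the factor $k$. Because one gets $k$ (rather than $2$) for free at each level of the recursion, after $t$ levels the branching factor accumulates to $(\log n/\log\log n)^t$. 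The base $t=2$ would begin with Tardos's $P_1$-free matrix of weight $\Omega(n\log n)$ together with one alternation layer. The main verification is that every embedding of $P_t$ into $A_t$ either fits entirely within one sub-block (ruled out inductively) or straddles a block boundary, in which case the last $t+1$ alternating columns of the would-be embedding collapse into a $P_{t-1}$ embedding inside a single sub-block, again contradicting induction.

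For the upper bound $\Ex(P_t,n) = O(n(\log n/\log\log n)^t)$, my plan is to induct on $t$, strengthening \cref{lem:PachTardos-reductions}(A) into the refined recurrence $\Ex(P_t,n) = O(\Ex(P_{t-1},n)\cdot \log n/\log\log n)$, which telescopes from $\Ex(P_1,n) = O(n\log n)$ to the desired bound. Given a $P_t$-free matrix $M$, I would partition its columns into $k = \Theta(\log n/\log\log n)$ blocks of equal width. The 1s lying entirely inside a single block contribute at most $k\cdot \Ex(P_t,n,n/k)$ in total, which under a suitable inductive generalization to rectangular matrices amounts to $O(n(\log n/\log\log n)^t)$. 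The 1s that straddle blocks are encoded in a contracted $n\times k$ matrix; using the alternating structure of $P_t$, one argues that this contraction must be $P_{t-1}$-free (an embedded $P_{t-1}$ there, paired with a single 1 inside a later block, would complete an embedded $P_t$ in $M$), and the inductive bound on the contraction delivers the gain from $\log n$ to $\log n/\log\log n$.

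The main obstacle, I expect, is twofold. On the lower-bound side, the challenge is verifying $P_t$-freeness rather than the weaker $B_t$-freeness of the Pettie--Tardos construction: since $P_{t+1}$ sits inside $B_t$ as a sub-pattern, their existing analysis does not automatically rule out $P_t$-embeddings, and one must identify a sharper structural invariant of the recursive construction (perhaps the ``interval nesting'' of the alternations within each recursive sub-block) that excludes them. On the upper-bound side, the difficulty is arranging the induction so that the block contraction genuinely produces a $P_{t-1}$-free matrix and so that the inductive bound for rectangular $P_t$-free matrices composes cleanly with the block decomposition; I would expect this to require inducting on a slightly strengthened bipartite statement for $\Ex(P_t,n,m)$ with independent $n$ and $m$, so that no polylogarithmic slack is accumulated across the $t$ levels of recursion.
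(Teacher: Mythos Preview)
Both halves of your proposal diverge from the paper and both carry genuine gaps.

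\textbf{Upper bound.} Your recurrence cannot telescope to the target: starting from $\Ex(P_1,n)=\Theta(n\log n)$ and applying $\Ex(P_t,n)=O(\Ex(P_{t-1},n)\cdot\log n/\log\log n)$ yields $O(n\log^t n/(\log\log n)^{t-1})$, off by a factor $\log\log n$. This reflects a real discontinuity the paper stresses --- $\Ex(P_1,n)$ is $\Theta(n\log n)$, not $\Theta(n\log n/\log\log n)$ --- so no induction anchored at $t=1$ can be sharp. More seriously, your claim that the contracted $n\times k$ matrix is $P_{t-1}$-free is unjustified: the first column of $P_{t-1}$ has a 1 in \emph{both} rows, but in the contraction a 1 at $(r,j)$ only certifies that row $r$ has \emph{some} 1 in block $j$; the two witnesses for rows $r_0,r_1$ in block $j_0$ need not lie in the same column of $M$, so you cannot rebuild the double-1 leading column of $P_t$ from an embedded $P_{t-1}$ in the contraction. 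The paper does not induct on $t$ at all. It assigns to each 1 in $A$ a system of landmark columns $F,a_1,b_1,\ldots,a_{t-1},b_{t-1},L$ and three signatures $\sig_0,\sig_1,\sig_2$ (at scales $\log_\zeta$ and $\log_{1+\epsilon}$ with $\epsilon=\Theta(1/\zeta)$), runs a four-step marking procedure, and proves via four lemmas that any unmarked 1 forces an occurrence of $P_t$. The choice $\zeta=(\log n)^{1/t}$ balances the $O(\log_\zeta^t n)$ many $\sig_0$-types against the $O((\zeta\log n)^{t-1})$ many $\sig_1/\sig_2$-types; this balance is exactly what produces $(\log n/\log\log n)^t$ and is invisible to any $P_t\to P_{t-1}$ reduction.

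\textbf{Lower bound.} The paper does not recurse on sub-blocks. It builds an explicit matrix $A_t[b,m]$ with rows indexed by $[m]\times[m]^b$ and columns by $\{(c,i)\in[m]^b\times\{0,1\}^b:\|i\|_1=t\}$, setting the entry to 1 iff $r=c+s\cdot\ieven+(m{+}1{-}s)\cdot\iodd$; $P_t$-freeness follows from a short lexicographic-plus-pigeonhole argument on the $t$ nonzero coordinates of $i$, and the alternating offsets $s$ versus $m{+}1{-}s$ are precisely what upgrades the easy $P_{2t}$-freeness of the simpler construction $A[b,m]$ to $P_t$-freeness. Your recursive plan is not known to work: the matrices of~\cite{PettieT24} are $B_t$-free but not known to be $P_t$-free, and the paper explicitly notes that prior to this work no lower bound beyond $\Omega(n\log n)$ was known for any $P_t$, $t\ge2$. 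The ``interval nesting'' invariant you allude to would be the entire content of the lower bound, not a detail to be filled in later.
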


Both the upper and lower bounds of \cref{thm:Pt-upper-lower-bound} are new.
No lower bound better than 
$\Ex(P_t,n) \geq \Ex(P_1,n) = \Omega(n\log n)$~\cite{FurediH92,BienstockG91,Furedi90,Tardos05} 
was previously known, and the best upper bound
was $O(n\log^t n)$, which follows from iterated application 
of \cref{lem:PachTardos-reductions}(A).
\cref{thm:Pt-upper-lower-bound} is notable in many ways.
It is the \emph{first} proof of an asymptotically sharp bound 
for any acyclic pattern with extremal function $\omega(n\log n)$;
it demonstrates that the $(\log n/\log\log n)^t$ 
density first seen in~\cite{PettieT24} is not contrived but 
a \emph{natural} phenomenon, 
and it highlights an unexpected discontinuity between $P_1$ and $P_2,P_3,\ldots$

\medskip 

Although $2^{\sqrt{\log n}}$ and $(\log n/\log\log n)^t$ look like they arise from quite different constructions, 
\cref{thm:PT-refutation,thm:Pt-upper-lower-bound}
use \emph{essentially} the same 0--1 matrix construction for their lower bounds, but under different parameterizations.

\subsection{Related and Unrelated Results}

\paragraph{Unrelated Results.} 
The function $2^{(\log n)^\delta}$ is a most fashionable 
function these days.
Kelley and Meka~\cite{KelleyM23} recently proved that Behrend's~\cite{Behrend46} 1946 construction of 3-progression-free subsets of $[N]=\{1,2,\dots,N\}$ with size $N/2^{\Theta(\sqrt{\log N})}$ is roughly the best possible.
Specifically, no 3-progression-free subset of $[N]$ has density $2^{-O(\log N)^{1/12}}$. 
This was later improved to $2^{-O(\log N)^{1/9}}$ by 
Bloom and Sisask~\cite{BloomS23}.
Abboud, Fischer, Kelley, Lovett, and Meka~\cite{AbboudFKLM24} discovered a combinatorial
boolean matrix multiplication algorithm running in 
$n^3/2^{\Omega(\log n)^{1/7}}$ time, 
improving a long line of $n^3/\poly(\log n)$-time algorithms.

\paragraph{Fine-grained Classification of Acyclic Patterns.}
Every existing analysis of an acyclic pattern $P$ has placed its extremal function in the following five-echelon hierarchy.  The first four echelons are ``natural'' inasmuch as there are lower bounds (see \cite{HS86,FurediH92} and \cref{thm:PT-refutation})
proving that certain patterns in {\bfseries Quasilinear}, {\bfseries Polylog}, and {\bfseries Near-linear} cannot be moved to a lower echelon.

\begin{description}
    \item[Linear.] $\Ex(P,n)=O(n)$.
    \item[Quasilinear.] $\Ex(P,n) = O(n2^{(\alpha(n))^{C_P}})$, 
    where $\alpha(n)$ is the inverse-Ackermann function.\footnote{There are more slowly growing functions in this class, e.g., $n\alpha(n)$~\cite{HS86,FurediH92} or $n\alpha^2(n)$~\cite{Pettie-GenDS11,Pettie15-SIDMA}.}
    \item[Polylog.] $\Ex(P,n) = O(n\log^{C_P} n)$, for some $C_P >0$.
    \item[Near-linear.] $\Ex(P,n) = n2^{O(\log^{1-\delta} n)}$, for some $\delta=\delta_P \in (0,1)$.
    \item[Polynomial.] $\Ex(P,n) = O(n^{1+C_P+o(1)})$, for some $C_P \in (0,1)$.
\end{description}

A lot of effort has been spent to understand the 
membership and boundaries of these classes.
We know of some infinite classes of {\bfseries Linear} matrices,
such as permutations~\cite{MarcusT04}, double-permutations~\cite{Geneson09}, 
and monotone patterns~\cite{Keszegh09,Pettie-SoCG11},
and even have good bounds on the leading constant factors~\cite{Fox13,CibulkaK17,GenesonPhD15} for (double) permutations.
Keszegh~\cite{Keszegh09} (see also~\cite{Geneson09,Pettie-FH11})
proved that the {\bfseries Linear} class cannot be characterized
by a finite set of minimally non-linear patterns.  
In particular,
\cref{lem:PachTardos-reductions}(A) and \cite{Geneson09} imply
that every pattern in the infinite sequence $(G_t)$ has 
$\Ex(G_t,n)=\Theta(n\log n)$ and there is an infinite sequence 
$(H_t), H_t\prec G_t$, of minimally non-linear patterns w.r.t.~$\prec$~\cite{Keszegh09,Geneson09}.\footnote{I.e., it is not known if $G_t$ is itself minimally non-linear.}

\begin{align*}
G_0 &= {\scriptsize \left(\begin{array}{cccc}
		& \bu	&\bu	& 		\\
		&		& 	 	& \bu		\\
		&		& 	 	& \bu		\\
\bu\end{array}\right)},
\hcm[.25]
G_1 = {\scriptsize \left(\begin{array}{ccccccc}
	&\bu	&\bu\\
	&	&	&	& &\bu\\
	&	&	&	&\bu &\\
\bu\\
	&	&	&	&	&	&\bu\\	
	&	&	&	&	&	&\bu\\
	&	&	&\bu
\end{array}\right)},
\hcm[.25]
G_2 = {\scriptsize \left(\begin{array}{cccccccccc}
	&\bu	&\bu\\
	&	&	&	& &\bu\\
	&	&	&	&\bu&\\
\bu\\
	&	&	&	&	&	&	&&\bu\\
	&	&	&	&	&	&	&\bu&\\
	&	&	&\bu\\
	&	&	&	&	&	&	&	&	&\bu\\	
	&	&	&	&	&	&	&	&	&\bu\\
	&	&	&	&	&	&\bu
\end{array}\right)}.
\end{align*}

While the characterization of linear patterns seems to be elusive, F\"uredi, Kostochka, Mubayi and Verestrate \cite{FurediKMV20} gave a simple characterization of linear \emph{connected} patterns\footnote{\cite{FurediKMV20} is about connected vertex-ordered graphs but their results translate to the adjacency matrices of connected graphs too.}

A pattern $P$ is called \emph{light} if it contains exactly one 1 per column.  Light patterns are closely related to (generalized) 
Davenport-Schinzel sequences~\cite{AgarwalSharir95,Klazar02}; 
they are \emph{all} known to be in {\bfseries Quasilinear}~\cite{Klazar92,Klazar02,Keszegh09}.  
More specifically, 
for any light $P$ with two rows,
$\Ex(P,n)$ is one of
$\Theta(n)$, $\Theta(n\alpha(n))$,
$\Theta(n2^{\alpha(n)})$,
$\Theta(n\alpha(n)2^{\alpha(n)})$, or 
$n2^{(1+o(1))\alpha^t(n)/t!}$, $t\geq 2$, 
and if $P$ has more rows, $\Ex(P,n)$ is upper bounded by one of these functions or $n(\alpha(n))^{(1+o(1))\alpha^t(n)/t!}$~\cite{Pettie-DS-JACM,Pettie15-SIDMA,Pettie-GenDS11,HS86,FurediH92,Nivasch10}.
It is an open problem whether \emph{light linear} 
patterns are themselves characterized by a 
finite set of forbidden patterns.  
The only known minimally non-linear ones (w.r.t.~$\prec$ and rotation/reflection) are $Q_3,Q_3'$, with $\Ex(\{Q_3,Q_3'\},n)=2n\alpha(n)+O(n)$~\cite{Nivasch10,Pettie-DS-JACM,Pettie15-SIDMA}.
\begin{align*}
Q_3  &= {\left(\begin{array}{cccc}
\bu&&\bu\\
&\bu&&\bu
\end{array}\right),}
&
Q_3' &= {\left(\begin{array}{cccc}
\bu&&\\
&&\bu\\
&\bu&&\bu
\end{array}\right).}
\end{align*}

Aside from \cref{lem:PachTardos-reductions}'s
weight-1 column-reduction rules,
there are a couple more methods to bound the extremal function of composite patterns.
\begin{align*}
    A\oplus B &= 
\left(\,\begin{array}{ccccccc}
\cline{1-4}
\multicolumn{1}{|c}{ } &&& \multicolumn{1}{c|}{} \\
\multicolumn{1}{|c}{ } &A&& \multicolumn{1}{c|}{} &&& \\\cline{4-7}
\multicolumn{1}{|c}{ } &&\zero{\hcm[.26]{\bu}}& \multicolumn{1}{|c|}{} &&& \multicolumn{1}{c|}{}\\\cline{1-4}
&&& \multicolumn{1}{|c}{} &&B& \multicolumn{1}{c|}{}\\
&&& \multicolumn{1}{|c}{} &&& \multicolumn{1}{c|}{}\\\cline{4-7}
\end{array}\,\right),
&
A \otimes B &= \left(\,\begin{array}{ccccccc}
\cline{3-5}
&&\multicolumn{1}{|c}{} &\zero{\rb{-3}{$B$}}& \multicolumn{1}{c|}{}\\
&&\multicolumn{1}{|c}{} &&\multicolumn{1}{c|}{}\\\cline{1-3}\cline{6-7}
\multicolumn{1}{|c}{}&&\multicolumn{1}{|c|}{\bu} &&\multicolumn{1}{c|}{\bu} & \bu&\multicolumn{1}{c|}{}\\\cline{3-5}
\multicolumn{1}{|c}{}&&\multicolumn{1}{c|}{} &&\multicolumn{1}{c|}{} & &\multicolumn{1}{c|}{}\\
\multicolumn{1}{|c}{}&\zero{\rb{3}{$A$}}&\multicolumn{1}{c|}{} &\hcm[.7]&\multicolumn{1}{c|}{} &\hcm[.4]&\multicolumn{1}{c|}{\hcm[.4]}\\\cline{1-3}\cline{6-7}
\end{array}\,\right).
\end{align*}

Keszegh~\cite{Keszegh09} proved that 
$\Ex(A\oplus B,n) \leq \Ex(A,n)+\Ex(B,n)$\footnote{$A\oplus B$ applies whenever $A$ and $B$ have 1s in their SE and NW corners, respectively.}
and Pettie~\cite{Pettie-SoCG11}, generalizing~\cite{Keszegh09}, 
proved that if $\Ex(A,n)$ and $\Ex(B,n)$ 
are linear (respectively, near-linear) and $B$ is \emph{legal}\footnote{$A\otimes B$ applies when $A$ contains two consecutive 1s in its top row, $B$ has 1s in its SW and SE corners. A $B$ is \emph{legal} if it is either \emph{non-ascending} or \emph{non-descending}; see~\cite{Pettie-SoCG11}.}
then $\Ex(A\otimes B,n)$ is linear (respectively, near-linear) as well.\footnote{Specifically, if $\Ex(A,n,m)\leq nf(n,m)$
and $\Ex(B,n,m)\leq ng(n,m)$ then 
$\Ex(A\otimes B,n,m)=O(nf(n,m)g(n,m))$.}
These two composition rules put many more patterns in the 
{\bfseries Linear} and {\bfseries Polylog} classes~\cite{Keszegh09,Pettie-SoCG11}, and allow one to put 
``artificial'' matrices in the class {\bfseries Quasilinear} 
that are not light; see~\cite[Footnotes 3,4]{PettieT24}.

Every $P\in\{0,1\}^{k\times l}$ is dominated by the all-1 
$k\times l$ matrix $K_{k,l}$.  By the \Kovari-\Sos-\Turan{} theorem,
$\Ex(P,n)\leq \Ex(K_{k,l},n) = O(n^{2-1/\min\{k,l\}})$.
Very recently Janzer, Janzer, Magner, and Methuku~\cite{JanzerJMM24}
proved that if $P$ has at most $t$ 1s per row (or per column),
then $\Ex(P,n) = O(n^{2-1/t+o(1)})$, which confirmed a conjecture of 
Methuku and Tomon~\cite{MethukuT22}.  This upper bound applies to all patterns, but still provides the best known analysis for acyclic 
patterns not subject to~\Korandi~et al.'s \cite{KorandiTTW19} method.  
For example, the \emph{pretzel} and \emph{spiral} patterns below have $\Ex(T_0,n),\Ex(T_1,n)=O(n^{3/2+o(1)})$.

\begin{align*}
T_0 &= \left(\begin{array}{cccc}
\bu\zero{\hcm[0]\rb{0.7}{\rule{1.66cm}{0.25mm}}} &   &      & \bu\\
    & \bu&      &\\
\zero{\hcm[.165]\rb{2.03}{\rotatebox{90}{\rule{.72cm}{0.25mm}}}}\bu\zero{\hcm[-.07]\rb{.7}{\rule{1.12cm}{0.25mm}}} &   & \bu   &\\
    & \zero{\hcm[.165]\rb{2.05}{\rotatebox{90}{\rule{.74cm}{0.25mm}}}}\bu\zero{\hcm[-0.06]\rb{.7}{\rule{.94cm}{0.25mm}}} &     & \zero{\hcm[.073]\rb{2.05}{\rotatebox{90}{\rule{1.2cm}{0.25mm}}}}\bu\end{array}\right),
&
T_1 &= \left(\begin{array}{ccccc}
\bu\zero{\hcm[0]\rb{0.75}{\rule{0.95cm}{0.25mm}}} &&\bu\zero{\hcm[0]\rb{0.75}{\rule{0.87cm}{0.25mm}}}&&\bu\\
&\bu\zero{\hcm[0]\rb{0.75}{\rule{0.87cm}{0.25mm}}}&&\bu\\
&&&&\bu\zero{\hcm[-.115]\rb{1.99}{\rotatebox{90}{\rule{.74cm}{0.25mm}}}}\\
\zero{\hcm[.165]\rb{1.99}{\rotatebox{90}{\rule{1.2cm}{0.25mm}}}}\bu\zero{\hcm[-.07]\rb{0.75}{\rule{1.48cm}{0.25mm}}}&&&\zero{\hcm[.077]\rb{1.99}{\rotatebox{90}{\rule{.74cm}{0.25mm}}}}\bu
\end{array}\right).
\end{align*}

Little is known about the \emph{gaps between} 
{\bfseries Linear}, {\bfseries Quasilinear}, and {\bfseries Polylog}.  
For example, is there a pattern $P$ with
$\Ex(P,n)= \omega(n)$ but $o(n\alpha(n))$?
or a $P$ not in {\bfseries Quasilinear} with $\Ex(P,n) = o(n\log n)$?
Tardos~\cite{Tardos05} proved that if we consider \emph{pairs} of excludes matrices,
$\Ex(\{P_1,P_1^{|}\},n)=\Theta(n\log n/\log\log n)$
while
$\Ex(\{P_1,P_1^{\odot}\},n)=\Theta(n\log\log n)$,
where $P_1^{|}$ is the reflection of $P_1$ 
across the $y$-axis
and $P_1^{\odot}$ it its 180-degree rotation.
Whether these and other extremal functions between quasilinear and $o(n\log n)$ can be realized by a \emph{single} pattern is an open question.

\paragraph{Edge-ordered Graphs.} As we noted earlier,
0--1 patterns can be regarded as bipartite graphs 
where the \emph{vertex sets} 
on each side of the partition are ordered.
Pach and Tardos~\cite{PachTardos06} 
considered a vertex-ordered variant of \Turan's problem
where the vertex sets of the 
graph and pattern have a linear order.  
This theory
is very similar to 0--1 matrix theory, with \emph{interval chromatic number} 2 taking the role of \emph{bipartiteness}.
Gerbner et al.~\cite{GerbnerMNPTV23} introduced the 
analogous problem on edge-ordered
graphs and forbidden patterns, where \emph{order chromatic number} 2 takes the role of bipartiteness/interval chromatic number 2.
The problem of characterizing linear edge-ordered patterns seems to be equally difficult in this setting; see~\cite{KucheriyaT23b} characterizing \emph{connected} edge-ordered graphs with a linear extremal function.
Gerbner et al.~\cite{GerbnerMNPTV23} conjectured that the extremal
function of edge-ordered acyclic patterns with order chromatic number 2 is $n^{1+o(1)}$.  This conjecture was recently proved by 
Kucheriya and Tardos~\cite{KucheriyaT23}, 
with a universal upper bound of $n2^{O(\sqrt{\log n})}$.
They further conjectured that every 
such extremal function is actually 
bounded by
$O(n\poly(\log n))$ \emph{\`{a} la} Pach-Tardos.
There is no known formal connection between the
Gerbner et al./Kucheriya-Tardos conjectures 
and the \Furedi-Hajnal/Pach-Tardos conjectures 
(\cref{conj:FH-acyclic,conj:PachTardos}).
For example, the refutations of the \Furedi-Hajnal conjecture~\cite{Pettie-FH11,PettieT24} had no analogues in the edge-ordered world, so one cannot expect an \emph{automatic} way to transform \cref{thm:PT-refutation} 
from vertex-ordered to edge-ordered graphs.
In the reverse direction, \cite{KucheriyaT23} has not 
led to a universal upper bound $\Ex(P,n) \leq n2^{O(\sqrt{\log n})}$ on acyclic 0--1 patterns $P$.  Nonetheless,
\cref{thm:PT-refutation} may cause one to doubt a universal 
$O(n\poly(\log n))$ upper bound for 
pattern graphs with order chromatic number 2.

\subsection{Organization}

\cref{sect:LowerBounds} introduces a new construction of
0--1 matrices with density (i.e., average number of 1s in a row) 
$\Theta(\log n/\log\log n)^t$ or $2^{\Theta(\sqrt{\log n})}$.
It presents the full proof of \cref{thm:PT-refutation} 
and its generalizations, as well as the lower bound half
of \cref{thm:Pt-upper-lower-bound}.
\cref{sect:UpperBounds} presents the upper bound half
of \cref{thm:Pt-upper-lower-bound}.
We conclude with some open problems and conjectures in
\cref{sect:conclusion}.

\section{Lower Bounds}\label{sect:LowerBounds}

\subsection{\emph{The Good, the Bad, and the Ugly} 0--1 Matrix Construction}

We define a class of 0--1 matrices 
$A$ w.r.t.~integer parameters $b,m$.
The rows are indexed by $[m]\times[m]^b$ and the columns by $[m]^b\times\{0,1\}^b$, both ordered lexicographically. 
Here $[m] = \{1,2,\ldots,m\}$ 
is the set of the first $m$ positive integers.
If one desires a square matrix, then $m=2^b$.
We identify rows by pairs $\Row = (s,r)\in [m]\times [m]^b$
and columns by pairs $\Col = (c,i)\in [m]^b \times\{0,1\}^b$.

The matrix $A=A[b,m]$ is defined as follows\footnote{By analogy to the plot of \emph{The Good, the Bad, and the Ugly} (1966), the Ugly (column $\Col=(c,i)$) knows the location of the graveyard ($i$) while the Good (row $\Row = (s,r)$) knows the name on the grave ($s$).}
\[
A((s,r),(c,i)) = \left\{\begin{array}{ll}
    1 &     \mbox{ if $r = c + s\cdot i$,}\\
    0 &     \mbox{ otherwise.}
\end{array}\right.
\]

\begin{lemma}\label{lem:size-A}
    $\|A[b,m]\|_1 \ge2^bm^b(m/(b+1)-1)$, i.e., 
    $\Omega(m/b)$ times the number of columns.
\end{lemma}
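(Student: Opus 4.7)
The plan is to count the 1s in $A[b,m]$ exactly and then extract the claimed bound via elementary inequalities. A 1 sits at position $((s,r),(c,i))$ precisely when $r_j = c_j + s\cdot i_j$ for every coordinate $j\in [b]$. Since the row label $r$ must lie in $[m]^b$, the only non-trivial constraint is that $c_j + s \le m$ whenever $i_j = 1$ (when $i_j=0$ the equation forces $r_j = c_j\in[m]$ automatically).

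Rather than summing over columns, I would group the 1s by the pair $(s,i)\in[m]\times\{0,1\}^b$ and count the number of valid pairs $(c,r)\in[m]^b\times[m]^b$ compatible with $(s,i)$. The constraints factor across coordinates: for each coordinate $j$ with $i_j=0$ there are $m$ choices (namely $c_j\in[m]$, $r_j=c_j$), and for each coordinate $j$ with $i_j=1$ there are $m-s$ choices (namely $c_j\in[m-s]$, $r_j=c_j+s$). Writing $w=\|i\|_1$, the contribution of $(s,i)$ to $\|A\|_1$ is $m^{b-w}(m-s)^w$, and summing over $i$ by the binomial theorem collapses to $(2m-s)^b$. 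Hence
\[
\|A[b,m]\|_1 \;=\; \sum_{s=1}^{m}\sum_{i\in\{0,1\}^b} m^{b-\|i\|_1}(m-s)^{\|i\|_1} \;=\; \sum_{s=1}^{m}(2m-s)^b \;=\; \sum_{k=m}^{2m-1} k^b.
\]

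Having this clean closed form, I would lower-bound the sum by an integral: since $x^b$ is increasing, $\sum_{k=m}^{2m-1} k^b \ge \int_{m-1}^{2m-1} x^b\,dx = \bigl((2m-1)^{b+1}-(m-1)^{b+1}\bigr)/(b+1)$. Bernoulli's inequality $(1-x)^n\ge 1-nx$ applied with $x=1/(2m)$ and $n=b+1$ gives $(2m-1)^{b+1}\ge 2^{b+1}m^{b+1}-(b+1)2^b m^b$; trivially $(m-1)^{b+1}\le m^{b+1}$. Combining these:
\[
\|A[b,m]\|_1 \;\ge\; \frac{(2^{b+1}-1)m^{b+1}}{b+1} - 2^b m^b \;\ge\; \frac{2^b m^{b+1}}{b+1} - 2^b m^b \;=\; 2^b m^b\!\left(\frac{m}{b+1}-1\right),
\]
where the second inequality uses $2^{b+1}-1\ge 2^b$.

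There is no real obstacle in this proof; it is just careful bookkeeping. The only subtle step is organizing the count as a sum over $(s,i)$ rather than over columns, which is what makes the binomial identity apply and produces the tidy closed form; from there the arithmetic is immediate, with the Bernoulli slack $\,-2^b m^b$ matching exactly the additive slack in the target bound.
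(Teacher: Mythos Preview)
Your proof is correct, and it takes a genuinely different route from the paper's. The paper argues probabilistically: pick a column $(c,i)$ uniformly at random, observe that the number of 1s in it is at least $m-\max_{u\in[b]}c(u)$, and then invoke the bound $\E[\max_{u\in[b]}c(u)]\le m-m/(b+1)+1$ for the maximum of $b$ i.i.d.\ uniform samples from $[m]$. This yields the average-per-column bound directly, without ever writing down an exact count.

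Your approach instead groups the 1s by $(s,i)$, factors the constraint across coordinates, and collapses the sum over $i$ via the binomial theorem to obtain the closed form $\|A[b,m]\|_1=\sum_{k=m}^{2m-1}k^b$; the integral and Bernoulli steps then recover the stated bound. This is more explicit---you actually compute $\|A\|_1$ exactly---and the final Bernoulli slack lands precisely on the target constant, which is satisfying. The paper's argument is shorter and leans on a standard order-statistics fact, whereas yours is self-contained and gives strictly more information (the exact weight). Both are perfectly valid; yours would serve equally well in the paper.
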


\begin{proof}
    Pick a column $(c,i)$ uniformly at random.  
    For some $s\in [m]$, 
    there exists a row $(s,r)$ such that $A((s,r),(c,i))=1$
    iff, for all $u\in [b]$ with $i(u)=1$, $r(u) = c(u) + s$
    does not ``overflow'' the range $[m]$.  
    We have
    $\E(\max_{u\in [b]} c(u)) \le m - m/(b+1)+1$.  Thus, a
    column of $A$ has at least $m/(b+1)-1$ 1s on average.
\end{proof}

\subsubsection{Matrices via Alternating Coordinate Offsets}

To construct $P_t$-free matrices we need a small 
modification to the $A[b,m]$ construction called $A_t = A_t[b,m]$.
The row-set of $A_t$ is the same as in $A[b,m]$, namely $[m]\times[m]^b$, but we only keep the following subset of the columns:
$\{(c,i)\in[m]^b\times\{0,1\}^b  \:\mid\: \|i\|_1 = t\}$.
Thus, to form a square $A_t[b,m]$ one would set $m={b\choose t}$.
The rows and columns of $A_t$ are also ordered lexicographically.

Define the vectors $\ieven,\iodd \in \{0,1\}^b$ 
for $i\in\{0,1\}^b$ as follows.
\begin{align*}
    \ieven(u) &= i(u)\cdot \left(\left(1+\sum_{v\geq u} i(v)\right) \MOD 2 \right),
&    \iodd(u) &= i(u) \cdot \left(\left(\sum_{v\geq u} i(v)\right) \MOD 2\right).
\end{align*}

Clearly, $i=\ieven + \iodd$, 
with $\iodd$ containing the last, 3rd last,
5th last 1s of $i$, 
and $\ieven$ containing the 2nd last, 4th last, 6th last 1s of $i$.
\begin{align*}
    A_t((s,r),(c,i)) &= \left\{\begin{array}{ll}
        1 & \mbox{ if $r = c  + s\cdot \ieven + (m+1-s)\iodd$,}\\
        0 & \mbox{ otherwise.}
    \end{array}\right.
\end{align*}  

\begin{remark}
    It is possible to show that if one restricts $A=A[b,m]$ 
    to columns $(c,i)$ with $\|i\|=t$, then $A$ is $P_{2t}$-free.  
    The reason for introducing $A_t$ and the alternating offsets 
    $s, m+1-s$ is to prove $P_t$-freeness.
\end{remark}

\begin{lemma}\label{lem:size-Aprime}
$\|A_t[b,m]\|_1 = \Omega(m^{b+1}{b\choose t}/(t2^t))$, 
i.e., 
$\Omega(m/(t2^t))$ times the number of columns.
\end{lemma}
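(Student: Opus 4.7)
My plan is to follow the template of \cref{lem:size-A}, counting the 1s column by column. Fix a column $(c,i)$ with $\|i\|_1 = t$. A row $(s,r)$ contributes a 1 in this column iff $r = c + s\cdot\ieven + (m{+}1{-}s)\iodd$ lies coordinate-wise in $[m]^b$. Reading this coordinate-by-coordinate: when $i(u)=0$, the constraint on $s$ is trivial; when $\ieven(u)=1$, we need $s \leq m - c(u)$; when $\iodd(u)=1$, we need $s \geq c(u) + 1$. Writing $X = \max_{u:\ieven(u)=1} c(u)$ and $Y = \max_{u:\iodd(u)=1} c(u)$ (with empty max~$=0$), the number of valid $s \in [m]$ is exactly $\max(0,\, m - X - Y)$.

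Next, fix $i$ of weight $t$ and let $t_e = \lfloor t/2\rfloor$ and $t_o = \lceil t/2\rceil$ denote the weights of $\ieven,\iodd$. When we sum over $c \in [m]^b$, the $b - t$ coordinates of $c$ outside the support of $i$ contribute a factor $m^{b-t}$ and do not affect $(X,Y)$. What remains is
\[
S \;:=\; \sum_{\substack{x\in[m]^{t_e}\\y\in[m]^{t_o}}} \max\!\bigl(0,\, m - \max_j x_j - \max_k y_k\bigr),
\]
which I would lower-bound by restricting to tuples with $x_j, y_k \leq \alpha m$ for $\alpha = t/(2(t{+}1))$. There are $\approx (\alpha m)^t$ such tuples and each contributes at least $(1 - 2\alpha)m = m/(t{+}1)$, so $S \geq \Omega\!\bigl(m^{t+1} (t/(t{+}1))^t / ((t{+}1)\cdot 2^t)\bigr) = \Omega(m^{t+1}/(t\cdot 2^t))$, using $(t/(t{+}1))^t = \Theta(1)$. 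Multiplying by $m^{b-t}$ and summing over the $\binom{b}{t}$ choices of $i$ then gives $\|A_t[b,m]\|_1 = \Omega(m^{b+1}\binom{b}{t}/(t\cdot 2^t))$, as desired.

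The only delicate point is the optimization of $\alpha$. A naive choice such as $\alpha = 1/4$ would only yield $\Omega(m/4^t)$ per column and so miss the claimed bound by a factor of $2^t/t$; the near-optimal $\alpha \approx t/(2t{+}2)$ (or equivalently an exact Beta-function computation giving $S = \Theta(m^{t+1}/((t{+}1)\binom{t}{t_e}))$) is what exploits the balanced split $t_e \approx t_o \approx t/2$ built into the definitions of $\ieven$ and $\iodd$. Everything else is a bookkeeping step.
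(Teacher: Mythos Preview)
Your proof is correct and follows essentially the same approach as the paper's: both count 1s per column by restricting to columns where all $t$ relevant coordinates $c(u)$ are small (at most roughly $m/2$), then noting that a linear-in-$m$ range of $s$ values works for each such column. Your derivation is slightly more explicit in writing down the exact count $\max(0,\,m-X-Y)$ and optimizing $\alpha$, whereas the paper phrases the same restriction probabilistically and sets $\epsilon=1/t$, but the substance is identical.
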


\begin{proof}
Pick a column $(c,i)$ uniformly at random. 
For a fixed $s\in[m]$, there exists a row $(s,r)$ with $A_t((s,r),(c,i))=1$ iff
$c(u) + s \in [m]$ for all $u$ with $\ieven(u)=1$ and
$c(u) + m+1-s \in [m]$ for all $u$ with $\iodd(u)=1$.
Thus, $(c,i)$ will be incident to some row $(s,r)$ for \emph{every} 
$s\in [(1/2-\epsilon)m,(1/2+\epsilon)m]$
with probability at least $(1/2-\epsilon)^t$.  
Taking $\epsilon=1/t$,
the columns have $\Omega(m/(t2^t))$ 1s on average.
\end{proof}

We consider elements of $[m]^b$ as $b$-dimensional 
integer vectors and add/subtract them accordingly.
When applied to vectors, `$<$' denotes lexicographic order.
Note that lexicographic ordering
makes these vectors into a linearly ordered group.

\begin{lemma}[Simple Properties]\label{lem:A-simple-properties}
Let $A^* = A[b,m]$ or $A_t[b,m]$.  
Consider an occurrence in $A^*$ of the following patterns.

\[
        \begin{array}{c}
        \Row_0\\ \Row_1
        \end{array}
        \left(\begin{array}{c}
        \zero{\rb{4.5}{\hcm[-.1]$\Col_0$}}\bu\\
        \bu
        \end{array}\right),
\hcm[3.5]
        \begin{array}{c}
        \Row_0\\
        \end{array}
        \left(\begin{array}{cc}
        \zero{\rb{4.5}{\hcm[-.1]$\Col_0$}}\bu & \zero{\rb{4.5}{\hcm[-.1]$\Col_1$}}\bu\\
        \end{array}\right),
\]
where $\Row_0=(s_0,r_0)$ and $\Col_0=(c_0,i_0)$, etc.
Then on the left, $s_0 < s_1$, and on the right $c_0<c_1$.
If $u$ is the position of the first non-zero of $c_1-c_0$ then 
$i_0(u)=1$. If $A^*=A[b,m]$, then $(c_1-c_0)(u)=s_0$,
whereas if $A^*=A_t[b,m]$ then $(c_1-c_0)(u)=s_0$ if $\ieven_0(u)=1$ and $(c_1-c_0)(u)=m+1-s_0$ if $\iodd_0(u)=1$.
\end{lemma}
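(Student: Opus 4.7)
My plan is to substitute directly into the equations defining the 1-entries of $A^*$ and then exploit the lexicographic orders on rows and columns.

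For the \emph{left} pattern, both rows $(s_0,r_0),(s_1,r_1)$ land on the same column $(c_0,i_0)$, so the defining equations express $r_0,r_1$ as affine functions of $s_0,s_1$ (namely $r=c_0+s\,i_0$ for $A[b,m]$, and $r=c_0+s\ieven_0+(m+1-s)\iodd_0$ for $A_t[b,m]$). If $s_0=s_1$ these formulas would force $r_0=r_1$, making the rows coincide; hence $s_0\neq s_1$, and the lex order on $(s,r)$ gives $s_0<s_1$.

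For the \emph{right} pattern, subtracting the two equations $r_0=c_0+\cdots=c_1+\cdots$ yields
\[
c_1-c_0 \;=\; \begin{cases} s_0(i_0-i_1), & A^*=A[b,m],\\ s_0(\ieven_0-\ieven_1)+(m+1-s_0)(\iodd_0-\iodd_1), & A^*=A_t[b,m].\end{cases}
\]
In the $A[b,m]$ case the conclusion is immediate: at the first non-zero coordinate $u$ one has $(c_1-c_0)(u)\in\{\pm s_0\}$, and lex-positivity of $c_1-c_0$ (noting that $c_0=c_1$ forces $i_0=i_1$ and so identical columns) pins the value to $+s_0$ with $i_0(u)=1$. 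For $A_t[b,m]$ I would do a short case check at $u$ on $(\ieven_0(u),\iodd_0(u),\ieven_1(u),\iodd_1(u))$: $i_0(u)=0$ makes the right-hand side non-positive, contradicting $c_0<c_1$; assuming $\ieven_0(u)=1$, the sub-case $(\ieven_1(u),\iodd_1(u))=(0,0)$ gives the claimed value $s_0$, the sub-case $(1,0)$ gives $0$ (contradicting first-non-zero), and the troublesome sub-case $(0,1)$ gives $2s_0-m-1$; the picture for $\iodd_0(u)=1$ is symmetric.

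The hard part is eliminating the troublesome cross-case, say $\ieven_0(u)=1$ together with $\iodd_1(u)=1$. I plan to extract a structural consequence of the first-non-zero hypothesis: at every $v<u$ the vanishing $s_0(\ieven_0(v)-\ieven_1(v))+(m+1-s_0)(\iodd_0(v)-\iodd_1(v))=0$ forces $i_0(v)=i_1(v)$. (Short case check on $\alpha=\ieven_0(v)-\ieven_1(v)$ and $\beta=\iodd_0(v)-\iodd_1(v)$ in $\{-1,0,1\}$; even the borderline $s_0=(m+1)/2$ delivers $\alpha+\beta=0$, i.e., $i_0(v)=i_1(v)$.) Hence $i_0$ and $i_1$ have equal numbers of 1s at positions $<u$ and, using $\|i_0\|_1=\|i_1\|_1=t$, equal numbers at positions $\geq u$. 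But the troublesome case demands opposite parities of those latter counts: $\ieven_0(u)=1$ says $\sum_{v\geq u}i_0(v)$ is even, while $\iodd_1(u)=1$ says $\sum_{v\geq u}i_1(v)$ is odd. That is the required contradiction, and the mirror cross-case is handled identically.
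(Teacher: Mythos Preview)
Your proof is correct and follows the same route as the paper: use the fact that $(s,c,i)$ determines $r$ and $(s,r,c)$ determines $i$ to get $s_0<s_1$ and $c_0<c_1$, then subtract the two defining equations to write $c_1-c_0$ in terms of $i_0-i_1$ (or its even/odd parts) and read off the value at the first non-zero coordinate $u$.

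In fact you go further than the paper on the $A_t$ case. Where the paper writes ``Then $(i_0-i_1)(u)=1$ as before,'' you correctly notice that the cross-case $\ieven_0(u)=1$, $\iodd_1(u)=1$ (and its mirror) gives $(c_1-c_0)(u)=2s_0-(m+1)$, which is not excluded by positivity alone. Your parity argument---that the vanishing of $(c_1-c_0)(v)$ for $v<u$ forces $i_0(v)=i_1(v)$ there, whence $\|i_0\|_1=\|i_1\|_1=t$ gives $\sum_{v\geq u} i_0(v)=\sum_{v\geq u} i_1(v)$, contradicting the opposite parities required by $\ieven_0(u)=1$ and $\iodd_1(u)=1$---is exactly what is needed to make that ``as before'' honest. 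So your argument is the paper's argument, but with the one genuinely non-obvious step filled in.
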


\begin{proof}
    Observe that if $A((s,r),(c,i))=1$ then `$r$' is uniquely determined by $s,(c,i)$ and `$i$' is uniquely determined by $(s,r),c$.
    
    \underline{Left.} Rows are ordered primarily by their $s$ component, so we must have $s_0\le s_1$. From the observation above, $s_0=s_1$ is not possible.
    
    \underline{Right.} As above, we must have $c_0\le c_1$ and $i_0=i_1$ is not possible, from the observation above.  
    If $A^* = A[b,m]$ we have 
    $c_1 - c_0 = s_0(i_0-i_1)$. Here all coordinates of $i_0-i_1$ are in $\{-1,0,1\}$. Now $i_0\ne i_1$ implies that there is a first non-zero position $u$ of $i_0-i_1$ and $c_0\le c_1$ implies that $(i_0-i_1)(u)=1$ hence $(c_1-c_0)(u)=s_0$.
    
    If $A^*=A_t[b,m]$, then we have $c_1-c_0=s_0(\ieven_0-\ieven_1)+(m+1-s_0)(\iodd_0-\iodd_1)$.  Then $(i_0-i_1)(u)=1$ as before,
    and depending on whether $\ieven_0(u)=1$ or $\iodd_0(u)=1$, we will see either $(c_1-c_0)(u) = s_0$ or $m+1-s_0$.
\end{proof}

\subsection{Refutation of the Pach-Tardos Conjecture}\label{sect:PachTardos-refutation}

We now recall and prove \cref{thm:PT-refutation}.

\mainthmPT*

\begin{proof}
    We shall prove that for all $b$ and $m$, $A=A[b,m]$ is 
    $\{S_0,S_1\}$-free.  The lower bound follows by setting 
    $m=2^b$, in which case $A$ is an $n\times n$ matrix,
    $n=2^{b^2+b}$ with $\|A\|_1 = \Omega(n(m/b))$.  Here $b=\sqrt{\log n}-O(1)$.
    
    Let us assume that $S_0$ is contained in $A$, 
    embedded in the rows and columns as indicated below.
    
\begin{align*}
    S_0 &= \begin{array}{c}
        \Row_0\\ \Row_1\\ \Row_2
        \end{array}
        \left(\begin{array}{ccccccc@{\hspace{.3cm}}}
        \zero{\rb{4}{$\Col_0$}}\bu && \zero{\rb{4}{$\Col_1$}}\phantom{\bu} && \zero{\rb{4}{$\Col_2$}}\bu && \zero{\rb{4}{$\Col_3$}}\phantom{\bu}\\
        \bu &&&&&& \bu\\
        &&\bu&&&&\bu
    \end{array}\right),
\end{align*}
where $\Row_0 = (s_0,r_0)$ and 
$\Col_0=(c_0,i_0)$, etc. 

Consider the 
differences $x=c_3-c_0$, $y=c_2-c_0$, and $z=c_3-c_1$. 
We clearly have $y \le x$, $z \le x$, and $y+z \ge x$. 
Note that among the lexicographically ordered non-zero integer vectors, this implies that either
(i) $x$ and $y$ agree on the position and value of their first non-zero coordinate,
(ii) $x$ and $z$ agree on the position and value of their first non-zero coordinate, or
(iii) the position of the first non-zero coordinate is the same for $x$, $y$ and $z$ and its value is strictly smaller for $y$ and $z$ than for $x$.
\cref{lem:A-simple-properties}, applied separately to the three rows, tells us that the first non-zero coordinates of $x$, $y$, and $z$ are $s_1$, $s_0$ and $s_2$, respectively. Applying \cref{lem:A-simple-properties} to columns $\Col_0$, and $\Col_3$, 
we obtain that $s_0 < s_1 < s_2$, making cases (i), (ii), and (iii) impossible.

\medskip

Note that $S_1$ is obtained from $S_0$ by swapping the first two rows. 
Without changing the definition of $x,y,z$, we still have
$y,z \le x$ and $y+z \ge x$.  \cref{lem:A-simple-properties}
implies the first non-zero coordinates in $x,y,z$ are
$s_0,s_1,s_2$, respectively, and that $s_0 < s_1,s_2$.  
Once again, cases (i), (ii), and (iii) are all impossible.
\end{proof}

\subsection{Extensions of \cref{thm:PT-refutation}}

In this section we extend \cref{thm:PT-refutation}'s lower bound 
on $\Ex(S_0,n)$ in two directions.
In \cref{sect:covering-pattern} we show that the argument of
\cref{thm:PT-refutation} can be applied to any 
\emph{covering pattern}, 
a class whose simplest members are $S_0$ and $S_1$.
In \cref{sect:hierarchy-of-S0-like-patterns} 
we show that there is a class of $S_0$-like patterns whose 
extremal functions can reach 
$n2^{C\sqrt{\log n}}$ for any desired constant $C$.

\subsubsection{Covering Patterns}\label{sect:covering-pattern}

\cref{def:covering-patterns} specifies the class of \emph{covering patterns}.

\begin{definition}\label{def:covering-patterns}
    Let $M\in\{0,1\}^{\alpha \times \beta}$ be an acyclic pattern
    with rows indexed with $0\le i<\alpha$ and columns $0\le j<\beta$.
    $M$ is a \emph{covering pattern} 
    if there is a distinguished row $k^*$ satisfying the following properties.
    \begin{enumerate}
        \item $M(k^*,0)=M(k^*,\beta-1)=1$, i.e.,  row $k^*$ has 1s in the first and last columns.
        \item Let $J$ be the set of row indices, excluding $k^*$, 
        with at least two 1s.  
        $J$ contains at most one element $j_0<k^*$.
        If such an element $j_0\in J$ exists, then some column contains two 1s in rows $[j_0,k^*]$. For any element $j\in J$ with $j>k^*$ some column contains two 1s in rows $[k^*,j]$.
        
        \item For $l\in J$ define $\first(l),\last(l)$ to be the column indices 
        of the first and last 1s in row $l$.  
        Then the \emph{real} intervals $[\first(l),\last(l)]$ cover the entire range, i.e., we have $\bigcup_{l\in J} [\first(l),\last(l)] = [0,\beta-1]$. 
    \end{enumerate}
\end{definition}

$S_0$ and $S_1$ satisfy \cref{def:covering-patterns}, 
as do $S_2,S_3$.  
Note that $S_0,S_2$ have a $j_0 < k^*$ whereas $S_1,S_3$ do not.
\begin{align*}
    S_2 &= \left(\begin{array}{ccccccc}
        &\bu & && &\bu  \\
        &&&\bu\\
        \bu&&&\bu&&&\bu\\
        \bu && \bu\\
        &&&&\bu&&\bu
    \end{array}\right),
&
    S_3 &=  \left(\begin{array}{cccccccc}
        \bu&&&&&&&\bu\\
        &&&\bu\\
        &&&\bu&&&\bu\\
        &&&&&\bu&&\bu\\
        &\bu&&&\bu\\
        \bu&&\bu
    \end{array}\right).
\end{align*}

\begin{theorem}\label{thm:A-avoids-more}
    For every covering pattern $M$ satisfying \cref{def:covering-patterns},
    $A[b,m]$ is $M$-free, hence $\Ex(M,n) \geq n2^{\sqrt{\log n}-O(\log\log n)}$.
\end{theorem}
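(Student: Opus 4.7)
The plan is to generalize the contradiction argument of \cref{thm:PT-refutation} to any covering pattern $M$. Suppose for contradiction that $M \prec A = A[b,m]$ via rows $\Row_i = (s_i, r_i)$ and columns $\Col_j = (c_j, i_j)$ for $0 \le i < \alpha$, $0 \le j < \beta$. Monotonicity of the embedding gives $s_0 \le s_1 \le \cdots \le s_{\alpha-1}$ and $c_0 < c_1 < \cdots < c_{\beta-1}$ in lex order. Set $x = c_{\beta-1} - c_0$, which corresponds to the distinguished row $k^*$, and $y_l = c_{\last(l)} - c_{\first(l)}$ for each $l \in J$. I shall compare these vectors in the lex-ordered abelian group $\mathbb{Z}^b$, whose positive cone is closed under addition.

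Set $d_j = c_{j+1} - c_j$, a lex-positive vector for $0 \le j < \beta-1$. Then $x = \sum_{j=0}^{\beta-2} d_j$ and $y_l = \sum_{j=\first(l)}^{\last(l)-1} d_j$, so $x - y_l$ is a sum of $d_j$'s, which gives $y_l \le x$. Since the intervals $[\first(l), \last(l)]$ cover $[0, \beta-1]$ as real intervals, every gap $[j, j+1]$ is contained in at least one such interval, so $\sum_{l \in J} y_l - x = \sum_{j=0}^{\beta-2} (n(j) - 1) d_j$ with multiplicities $n(j) \ge 1$. As a nonnegative integer combination of lex-positive vectors, this is lex-nonnegative, giving $\sum_{l \in J} y_l \ge x$.

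Next I apply \cref{lem:A-simple-properties} to the column pair $(\Col_0, \Col_{\beta-1})$ witnessing row $k^*$ and to the pair $(\Col_{\first(l)}, \Col_{\last(l)})$ witnessing each $l \in J$: the first nonzero coordinate of $x$ is at some position $u^*$ with value $s_{k^*}$, and that of $y_l$ is at some $u_l$ with value $s_l$. Since $y_l \le x$ and both are lex-positive, $y_l$ vanishes at every coordinate strictly before $u^*$, and $y_l(u^*) \in \{0, 1, \dots, s_{k^*}\}$ is positive exactly when $u_l = u^*$. The inequality $\sum_{l \in J} y_l \ge x$ forces $\sum_{l \in J} y_l(u^*) \ge s_{k^*}$, yielding a dichotomy: either (A) some $l \in J$ has $y_l(u^*) = s_{k^*}$, so $s_l = s_{k^*}$; or (B) at least two distinct $l \in J$ have $u_l = u^*$ with $0 < s_l < s_{k^*}$.

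Finally I derive a contradiction from the structural conditions in \cref{def:covering-patterns}: for each $l \in J$ with $l > k^*$, the column containing two 1s in rows $[k^*, l]$, combined with the left-diagram case of \cref{lem:A-simple-properties} and the monotonicity of the $s_i$'s, gives $s_l > s_{k^*}$; symmetrically $s_{j_0} < s_{k^*}$ when $j_0 \in J$ exists. Case~(A) fails because no $l \in J$ can satisfy $s_l = s_{k^*}$, and case~(B) fails because at most one $l \in J$ (namely $l = j_0$) has $s_l < s_{k^*}$. The extremal bound follows from \cref{lem:size-A}: setting $m = 2^b$ produces a square matrix with $n = 2^{b(b+1)}$ rows and weight $\Omega(n \cdot 2^b / b)$; taking $b = \lfloor \sqrt{\log n} \rfloor - O(1)$ yields the stated $n \cdot 2^{\sqrt{\log n} - O(\log \log n)}$. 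I expect the main obstacle to be the lex-order arithmetic, specifically the rigorous passage from the covering hypothesis to $\sum_{l \in J} y_l \ge x$ in the ordered group $\mathbb{Z}^b$.
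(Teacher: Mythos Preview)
Your proof is correct and follows essentially the same line as the paper's: define $x$ and the $y_l$'s, use \cref{lem:A-simple-properties} to identify their leading nonzero coordinates as $s_{k^*}$ and $s_l$, and derive a contradiction at coordinate $u^*$ from the covering inequality $\sum_{l\in J} y_l \ge x$ together with the facts that every $s_l\ne s_{k^*}$ and at most one $s_l<s_{k^*}$. One cosmetic slip: the embedding only guarantees $c_0\le c_1\le\cdots\le c_{\beta-1}$, so some $d_j$ may be zero rather than strictly lex-positive, but this does not affect your nonnegativity argument (and $x,y_l\ne 0$ are secured by the right-hand case of \cref{lem:A-simple-properties}).
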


\begin{proof}
Suppose, towards obtaining a contradiction, that $A=A[b,m]$ contains $M$,
with row $l$ of an instance of $M$ labeled $(s_l,r_l)$ and column $j$
labeled $(c_j,i_j)$.
\cref{def:covering-patterns}(2) implies that 
all members of $\{s_l \mid l\in J\}$ are different from $s_{k^*}$, 
and all but at most one are strictly greater than $s_{k^*}$.

Define $x=c_{\beta-1}-c_0$, and for $l\in J$, $y_l = c_{\last(l)}-c_{\first(l)}$.
Suppose the position of the first non-zero in $x$ is $u$.
By \cref{lem:A-simple-properties}, $x(u)=s_{k^*}$, and 
the first non-zero in the vector $y_l$ is $s_l$. 
We have $y_l \le x$ for all $l\in J$ hence $y_l(u)=0$ for all $l>k^*$.
(If $l<k^*$, it is possible that $y_l(u)=s_l < s_{k^*}$.)
The covering 
property \cref{def:covering-patterns}(3) 
implies that $\sum_{l\in J} y_l \ge x$, 
but this cannot be satisfied since $\sum_{l\in J} y_l(u) < x(u) = s_{k^*}$.
We conclude that $A$ is $M$-free and by \cref{lem:size-A}, 
$\Ex(M,n)\geq n2^{\sqrt{\log n}-O(\log\log n)}$.
\end{proof}

\subsubsection{A Hierarchy of Patterns $S_0^{(t)}$}\label{sect:hierarchy-of-S0-like-patterns}

We prove that for every $C\geq 1$, 
there exists an acyclic pattern 
$S_0^{(t)}$ such that 
$\Ex(S_0^{(t)},n) \geq n2^{C\sqrt{\log n}}$.
The 0--1 matrix construction uses a small generalization of Behrend's arithmetic progression-free sets.

\begin{lemma}[Cf.~Behrend~\cite{Behrend46}]\label{lem:Behrend}
    For any $h\geq 2$, there exists a subset $S\subset [N]$ with 
    $|S|=N/2^{O(\sqrt{\log h\log N})}$ 
    such that there are no non-trivial solutions to
    $\alpha s_0 + \beta s_1 + \gamma s_2 = 0$, 
    with $s_0,s_1,s_2\in S$ and integers
    $-h\le\alpha,\beta,\gamma\le h$.\footnote{A solution is trivial if it exists for any non-empty $S$, that is, $\alpha+\beta+\gamma=\alpha\beta(s_0-s_1)=\beta\gamma(s_1-s_2)=\gamma\alpha(s_2-s_0)=0$.}
\end{lemma}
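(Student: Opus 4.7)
The plan is to adapt Behrend's classical $3$-AP-free construction to handle bounded integer coefficients. Fix a base $B = (3h+1)M$ with digit bound $M \ge 2$ and dimension $d$, and restrict to integers $n \in [0, B^d)$ whose base-$B$ digits all lie in $\{0,1,\dots,M-1\}$; write $v(n) \in \{0,\dots,M-1\}^d$ for the digit vector, so there are $M^d$ such $n$. The key ``no-carry'' bound $|\alpha a + \beta b + \gamma c| \le 3h(M-1) < B$, valid for $|\alpha|,|\beta|,|\gamma|\le h$ and digits $a,b,c \in \{0,\dots,M-1\}$, ensures that the integer identity $\alpha s_0 + \beta s_1 + \gamma s_2 = 0$ is equivalent, for such $n$, to the coordinatewise vector identity $\alpha v(s_0) + \beta v(s_1) + \gamma v(s_2) = 0$ in $\mathbb{Z}^d$.

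Next I would apply a \emph{double} pigeonhole on two digit-vector statistics: the squared norm $\|v\|^2$ and the coordinate sum $\sum_u v(u)$, excluding the zero vector so that the second statistic is strictly positive. Only $O(d^2 M^3)$ pairs are possible, so some pair $(\rho^2, \sigma)$ with $\sigma \ge 1$ is realized by at least $M^d / O(d^2 M^3)$ of the $v(n)$'s; let $S$ be the corresponding set of integers. The extra sum-of-digits filter (beyond classical Behrend) is crucial: summing the coordinates of the vector equation gives $(\alpha+\beta+\gamma)\sigma = 0$, which, since $\sigma \ge 1$, forces $\alpha+\beta+\gamma = 0$ and reduces us to the homogeneous case. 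Then $v(s_0),v(s_1),v(s_2)$ are affinely dependent, hence collinear in $\mathbb{R}^d$; since a line meets the sphere of radius $\rho$ in at most two points, at least two of them coincide. A short case analysis on which of $\alpha,\beta,\gamma$ vanish then shows that any such solution satisfies the pairwise equalities of the footnote and is therefore trivial.

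Finally, one optimizes the parameters. Writing $N \asymp B^d$, the inequality $\log(N/|S|) \le (d-3)\log(3h+1) + 3\log N / d + O(\log d)$ is minimized by the choice $d = \Theta\bigl(\sqrt{\log N / \log h}\bigr)$, which balances the first and third terms at $\Theta(\sqrt{\log N \log h})$ and yields $|S| \ge N / 2^{O(\sqrt{\log h \log N})}$, as required. The main departure from the classical construction is the inhomogeneous case $\alpha+\beta+\gamma \ne 0$, which I expect to be the principal technical obstacle; it is resolved at the modest cost of one additional pigeonhole factor of roughly $dM$ in the denominator, comfortably absorbed into the error exponent.
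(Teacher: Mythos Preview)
Your argument is correct and essentially parallel to the paper's, with one genuine difference worth noting. Both proofs encode integers as bounded-digit vectors in a base large enough that $\alpha s_0+\beta s_1+\gamma s_2=0$ becomes a coordinatewise identity, then pigeonhole onto a common $\ell_2$-sphere and use the strict triangle inequality (equivalently, that a line meets a sphere in at most two points) to force coincidences among $v(s_0),v(s_1),v(s_2)$.

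The divergence is in how the inhomogeneous case $\alpha+\beta+\gamma\neq 0$ is eliminated. The paper handles it by \emph{prefixing every digit vector with a leading $1$}: after arranging $\gamma\le 0\le\alpha,\beta$, the digitwise identity $\alpha s_0+\beta s_1=|\gamma|s_2$ read in the leading coordinate gives $\alpha+\beta=|\gamma|$ directly, so only the single pigeonhole on $\|v\|_2$ is needed and the size bound is $d^{D-2}/D$. You instead run a \emph{second pigeonhole} on the coordinate sum $\sigma$, which lets you sum the vector identity to get $(\alpha+\beta+\gamma)\sigma=0$ and hence $\alpha+\beta+\gamma=0$ since $\sigma\ge 1$. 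This costs an extra factor of roughly $dM$ in the denominator, which, as you observe, is harmless for the stated bound. The paper's prefix trick is a touch slicker (one pigeonhole, no need to exclude the zero vector), but your double-pigeonhole route is equally valid and arguably more transparent about why the inhomogeneous case dies.
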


\begin{proof}
    Let $V\subset \{0,\ldots,d-1\}^D$ 
    be a subset of vectors with $|V|\geq d^{D-2}/D$ having a 
    common $\ell_2$-norm.  Let us obtain $S$ from $V$ by prefixing each vector in $V$ with a `1' and then interpreting them as $(D+1)$-digit 
    integers in base $2hd$. In formula we have
    \[
    S=\left\{(2hd)^D+\sum_{i=0}^{D-1}v_i(2hd)^i \;\;\middle|\;\; (v_{D-1},\dots,v_1,v_0)\in V\right\}.
    \]
    We set $d=\floor{2^{\sqrt{\log h\log(N/2)}-\log(2h)}}$ and $D=\floor{\sqrt{\log(N/2)/\log h}}$, 
    so for all $s\in S$ we have $s<2(2hd)^D \leq N$.  
    Expressed in terms of $h,N$,
    $|V|=|S|\geq d^{D-2}/D = N/2^{O(\sqrt{\log h\log N})}$.
    
    Now suppose there is a solution to 
    $\alpha s_0 + \beta s_1 + \gamma s_2=0$ with integers
    $-h\le \alpha,\beta,\gamma\le h$. We need to show this is a trivial solution. With a slight abuse of notation, we will identify $s_0,s_1,s_2$ with the sequence of $D+1$ digits in their base $2hd$ expression. Note that these sequences are all obtained from elements in $V$ by putting an extra 1 in front of them, so they have the same $\ell_2$-norms, say $r$. By symmetry, we may assume that $\gamma \leq 0 \leq \alpha,\beta$ and consider the equality $\alpha s_0+\beta s_1=|\gamma|s_2$ that holds on the level of integers. Due to the base being much larger than any of the digits, it must also hold for the corresponding sequence of digits. The first digit of $\alpha s_0+ \beta s_1$ is $\alpha+\beta$, while the first digit of $|\gamma|s_2$ is $|\gamma|$, so we must have $\alpha+\beta=|\gamma|$, $\alpha+\beta+\gamma=0$. From $\|s_0\|_2=\|s_1\|_2=\|s_2\|_2=r$ we obtain $\|(|\gamma|s_2)\|_2 = |\gamma| r$ but if $s_0\neq s_1$ and $\alpha$, $\beta$ are positive, then $\|\alpha s_0 + \beta s_1\|_2 < (\alpha+\beta)r = |\gamma| r$, a contradiction. So if $\alpha,\beta>0$, we have $s_0=s_1$, and from $(\alpha+\beta)s_0 =|\gamma|s_2$, we also have $s_0=s_1=s_2$ and the solution is trivial. If one or both of $\alpha$ or $\beta$ is zero, we similarly get that $\alpha s_0+\beta s_1+\gamma s_2=0$ 
    constitutes a trivial solution.
\end{proof}

\begin{theorem}\label{thm:denser-construction}
    For each $t\geq 2$, 
    $\Ex(S_0^{(t)},n) \geq n2^{(1-o(1))\sqrt{\log t\log n}}$, 
    where $S_0^{(t)}$ is defined to be
    
\begin{align*}
    S_0^{(t)} &= \begin{array}{c}
        \Row_0\\ \Row_1\\ \Row_2
        \end{array}
        \left(\begin{array}{ccccccccccccc@{\hspace{.3cm}}}
        \zero{\rb{4}{$\Col_0$}}\bu 
          && \zero{\rb{4}{$\Col_1$}}\phantom{\bu} 
          && \zero{\rb{4}{$\Col_2$}}\bu 
          && 
          && \zero{\hcm[-.33]\rb{4}{$\Col_{2t-3}$}}\phantom{\bu}
          && \zero{\hcm[-.28]\rb{4}{$\Col_{2t-2}$}}\bu
          && \zero{\hcm[-.28]\rb{4}{$\Col_{2t-1}$}}\phantom{\bu}\\
        \bu &&&&&&\ldots&&&&&& \bu\\
        &&\bu&&&&&&\bu&&&&\bu
    \end{array}\right).
\end{align*}
Note that $S_0 = S_0^{(2)}$.
\end{theorem}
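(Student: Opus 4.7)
The plan is to modify the matrix $A[b,m]$ from \cref{sect:LowerBounds} in two ways: enlarge the alphabet of the column $i$-component from $\{0,1\}^b$ to $\{0,1,\ldots,t-1\}^b$, and restrict the row $s$-component to a Behrend-like set $S\subset[1,m/(t-1)]$ furnished by \cref{lem:Behrend} with parameter $h=t-1$ (so $S$ admits no non-trivial solutions to $\alpha s_0+\beta s_1+\gamma s_2=0$ with $\alpha,\beta,\gamma\in[-(t-1),t-1]$; the case $t=2$ is already \cref{thm:PT-refutation}, so I assume $t\ge 3$). The resulting matrix $A^*=A^*[b,m,t]$ has $A^*((s,r),(c,i))=1$ iff $r=c+s\cdot i$ componentwise, and the analogue of \cref{lem:A-simple-properties} is unchanged: a row $(s,r)$ hitting $(c_0,i_0)$ and $(c_1,i_1)$ forces $c_1-c_0=s(i_0-i_1)$. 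A weight count parallel to \cref{lem:size-A}, combined with the Behrend bound $|S|\ge(m/(t-1))/2^{O(\sqrt{\log t\log m})}$, gives weight $\Omega(|S|\,t^b\,m^b)$; optimizing $b\approx\sqrt{\log n/\log t}$ and $m\approx t^{b+1}$ then produces an essentially square $n\times n$ matrix with $n\cdot 2^{(1-o(1))\sqrt{\log t\log n}}$ ones.

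To prove $A^*$ is $S_0^{(t)}$-free, suppose such an occurrence exists with rows $(s_k,r_k)$ and columns $(c_j,i_j)$. Applying the basic lemma to Row 0 on $\{\Col_0,\Col_{2t-2}\}$, Row 1 on $\{\Col_0,\Col_{2t-1}\}$, and Row 2 on $\{\Col_1,\Col_{2t-1}\}$ yields
\[
    y:=c_{2t-2}-c_0=s_0(i_0-i_{2t-2}),\qquad x:=c_{2t-1}-c_0=s_1(i_0-i_{2t-1}),\qquad z:=c_{2t-1}-c_1=s_2(i_1-i_{2t-1}),
\]
with $s_0<s_1<s_2$ (from the rows sharing $\Col_0$ and $\Col_{2t-1}$). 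Column ordering forces $y,z\le x$ and $y+z\ge x$ lexicographically, so the three-case dichotomy of \cref{thm:PT-refutation} applies: either (i) $y$ and $x$ share first-nonzero position and value, (ii) the analogous statement for $z$ and $x$, or (iii) $y,z,x$ share a first-nonzero position $u^*$ with $y(u^*),z(u^*)<x(u^*)$. Cases (i) and (ii) immediately produce the non-trivial \cref{lem:Behrend}-relations $\alpha s_0=\gamma s_1$ and $\beta s_2=\gamma s_1$ with coefficients in $[1,t-1]$, contradicting the defining property of $S$.

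Case (iii) also uses the intermediate columns: the basic lemma applied to each $\Col_{2j}$ via Row 0 and each $\Col_{2j+1}$ via Row 2 gives $c_{2j}=c_0+s_0(i_0-i_{2j})$ and $c_{2j+1}=c_{2t-1}-s_2(i_{2j+1}-i_{2t-1})$. Setting $a_j:=(i_0-i_{2j})(u^*)$, $b_j:=(i_{2j+1}-i_{2t-1})(u^*)$ and $\gamma:=(i_0-i_{2t-1})(u^*)\in[1,t-1]$, the column ordering $c_0(u^*)\le c_1(u^*)\le\cdots\le c_{2t-1}(u^*)$ at position $u^*$ translates into the staircase constraints
\[
    s_0 a_j + s_2 b_j \;\le\; s_1\gamma \;\le\; s_0 a_{j+1} + s_2 b_j \qquad (j=0,\ldots,t-2),
\]
with $a_0=b_{t-1}=0$ and $a_j,b_j\in[0,t-1]$. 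Any flat step ($a_j=a_{j+1}$ or $b_j=b_{j+1}$) collapses the two inequalities into the non-trivial \cref{lem:Behrend}-equation $s_0 a+s_2 b=s_1\gamma$, a contradiction. Otherwise both sequences are strictly monotone through $t$ integer values in $[0,t-1]$, pinning down $a_j=j$ and $b_j=t-1-j$; this forces $i_0(u^*)=t-1$ and $i_{2t-1}(u^*)=0$, so $\gamma=t-1$, and the $j=0$ lower inequality becomes $s_2(t-1)\le s_1(t-1)$, giving $s_2\le s_1$ in contradiction to $s_1<s_2$.

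I expect the main technical obstacle to be case (iii): the alphabet size must be exactly $t$, so that strictly monotone staircases are ruled out by the $s_1<s_2$ ordering (not by Behrend) while flat-step configurations remain within reach of \cref{lem:Behrend}; a larger alphabet would spoil the strictly-monotone argument, and a smaller one would shrink the density. This balance also dictates the parameter choice $b\approx\sqrt{\log n/\log t}$ that produces the claimed $2^{(1-o(1))\sqrt{\log t\log n}}$ density.
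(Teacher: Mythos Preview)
Your proposal is correct and follows essentially the same approach as the paper: the same generalized construction with column alphabet $\{0,\ldots,t-1\}^b$ and Behrend-type row set $S$, and the same reduction to the first nonzero coordinate $u^*$ of $x=c_{2t-1}-c_0$, where column ordering yields the chain $s_0 a_j + s_2 b_j \le s_1\gamma \le s_0 a_{j+1}+s_2 b_j$. The only cosmetic difference is the endgame: the paper notes that the nonincreasing sequence $b_0,\ldots,b_{t-2}$ sits inside $\{1,\ldots,\gamma-1\}\subseteq\{1,\ldots,t-2\}$ and applies pigeonhole to force a flat step (hence a Behrend relation), whereas you argue that strict monotonicity of both sequences forces $\gamma=t-1$ and then $s_2\le s_1$ directly.
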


\begin{proof}
We modify the construction of $A=A[b,m]$ as follows.  
The rows and columns are identified with 
$S\times [m]^b$ and 
$[m]^b \times \{0,\ldots,t-1\}^b$, respectively,
where $S\subset [m/b]$ is the set from \cref{lem:Behrend} with size
$m^{1-o(1)}$ avoiding solutions to $\alpha s_0 + \beta s_1 + \gamma s_2 = 0$ with integers $-t+1\le\alpha,\beta,\gamma\le t-1$.
As usual $A((s,r),(c,i))=1$ iff $r = c+si$.
We make $A$ square by choosing $m$ such that $t^b = |S|$,
so $m=t^{(1+o(1))b}$, $n=m^b t^b = t^{(1+o(1))b^2}$, and $b=(1-o(1))\sqrt{\log n/\log t}$.
There are $\Theta(|S|)$ 1s per column, on average, so 
$\|A\|_1 = \Omega(n|S|) = \Omega(n2^{(1-o(1))\sqrt{\log t\log n}})$.

We now argue that $A$ is $S_0^{(t)}$-free. Assume, for a contradiction, that $A$ contains $S_0^{(t)}$ in rows $\Row_0=(s_0,r_0),\dots \Row_2=(s_2,r_2)$ and columns $\Col_0=(c_0,i_0),\dots,\Col_{2t-1}=(c_{2t-1},i_{2t-1})$. Define $x,y_k,z_k$ as 
\begin{align*}
    x &=c_{2t-1}-c_0,\\
    y_k &= c_{2k}-c_0, 
            & \mbox{ for $1\le k\le t-1$,}\\
    z_k &= c_{2t-1}-c_{2k-1},        
            & \mbox{ for $1\le k\le t-1$.}
\end{align*}
Observe that for all $k$, $y_k\le x$ and $z_k \le x$ but
\begin{align}
    y_k + z_k \ge x \ge y_{k} + z_{k+1},\label{eqn:x-yk-zk}
\end{align}
where the last inequality holds for $k<t-1$.

Note that the following analogue of Lemma~\ref{lem:A-simple-properties} holds here: (i) if a column of $A$ contains 1s in the distinct rows $(s,r)$ and $(s',r')$, then $s\ne s'$ and (ii) if the row $(s,r)$ of $A$ contains 1s in the distinct columns $(c,i)$ and $(c',i')$, then $c\ne c'$ and all coordinates of $c-c'$ are of the form $js$ with $-t+1\le j\le t-1$. In particular, (i) implies $s_0<s_1<s_2$.

Let $u$ be the position of the first non-zero coordinate of $x$. By (ii) above and $c_0\le c_{2t-1}$ we have $x(u)=js_1$ with $1\le j\le t-1$.
We use (ii) again to define $j_k$, $j_k'$ such that
\begin{align*}
    y_k(u) &= j_k s_0,\\
    z_k(u) &= j'_k s_2.
\end{align*}
Note that $y_k,z_k$ cannot have any non-zeros preceding coordinate $u$ and must have a non-negative value at position $u$.
Thus $0\le j_k\le t-1$ and for (\ref{eqn:x-yk-zk}) to be satisfied,
$1\le j'_k<j-1$ since $s_2 > s_1$.  
(This is already a contradiction when $t=2$.)
We can write \cref{eqn:x-yk-zk} as
\begin{align}
    j_k s_0 + j'_k s_2 \geq js_1 \geq j_k s_0 + j'_{k+1}s_2,\label{eqn:j-jk-jkprime}
\end{align}
Hence $t-1 \geq j > j'_1 \geq j'_2 \geq \cdots \geq j'_{t-1} \geq 1$.  
By the pigeonhole principle there must exist $j'_k = j'_{k+1}$, but then 
(\ref{eqn:j-jk-jkprime}) holds with equality, meaning $S$ supports a 
non-trivial solution to $\alpha s_0 + \beta s_1 + \gamma s_2 = 0$ 
with $\alpha = j_k, \beta = -j, \gamma = j'_k$, a contradiction.
\end{proof}

\subsection{Polylogarithmic Lower 
Bounds on Alternating Matrices}

In this section we prove the lower bound 
half of \cref{thm:Pt-upper-lower-bound}.

\begin{theorem}\label{thm:Pt-lowerbound}
    For $t\geq 1$, $\Ex(P_t,n) = \Omega(n(\log n/\log\log n)^t)$.
\end{theorem}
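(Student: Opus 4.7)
I will instantiate the matrix $A_t=A_t[b,m]$ of Section~\ref{sect:LowerBounds} with $m=\binom{b}{t}$, so that the matrix is square $n\times n$ with $n=m^{b+1}$, and argue that it is $P_t$-free and has the required number of 1s. For fixed $t$, $\log n=(b+1)\log m=\Theta(tb\log b)$ gives $b=(1-o(1))\log n/(t\log\log n)$; by Lemma~\ref{lem:size-Aprime} the number of 1s is $\Omega\!\left(n\binom{b}{t}/(t 2^t)\right)=\Omega_t(nb^t)=\Omega(n(\log n/\log\log n)^t)$, which matches the desired bound once $P_t$-freeness is established.

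\textbf{Proving $P_t$-freeness.} Suppose, for contradiction, that $A_t$ contains $P_t$ in rows $\Row_0=(s_0,r_0)$ (top) and $\Row_1=(s_1,r_1)$ (bottom), with columns $\Col_j=(c_j,i_j)$ in order. Since $P_t$ has a column with a 1 in both rows, the $A_t$-analogue of Lemma~\ref{lem:A-simple-properties} gives $s_0\ne s_1$; and for any pair $\Col_j<\Col_{j'}$ both incident to the same row, the first nonzero coordinate $u$ of $c_{j'}-c_j$ satisfies $i_j(u)=1$, with value $s$ if $\ieven_j(u)=1$ or $m+1-s$ if $\iodd_j(u)=1$, where $s$ is that row's first component. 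The plan is to telescope the column differences along each row: in the top row, consecutive top-1 columns contribute $t$ such differences summing to the top row's total span, and the bottom row contributes a parallel sum. The zigzag interleaving of top and bottom columns in $P_t$ lets me compare the two telescoping identities at a common first-nonzero coordinate, where a consistent assignment of values from $\{s_0,m+1-s_0,s_1,m+1-s_1\}$ to the individual contributions must exist. Exploiting the $\ieven/\iodd$-parity flips dictated by the alternating-offset construction, I will show that no such assignment is possible, yielding the contradiction.

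\textbf{Main obstacle.} The delicate step is the parity bookkeeping. The remark preceding Lemma~\ref{lem:size-Aprime} emphasizes that the unmodified construction $A[b,m]$ restricted to weight-$t$ columns is only $P_{2t}$-free; gaining the factor of two to rule out $P_t$ is exactly what the alternating offsets buy us. Matching the zigzag of $P_t$ to the $\ieven/\iodd$ structure, so that each step of the zigzag forces the ``opposite type'' of offset, and extracting a clean contradiction from the resulting telescoping identity, is where the real work lies. I expect this to be a case analysis reminiscent of the one in Theorem~\ref{thm:PT-refutation} but stretched along a length-$(2t+1)$ zigzag rather than the three-row configuration of $S_0$. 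Once it is handled, plugging in the counting above immediately gives the stated lower bound.
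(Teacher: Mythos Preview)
Your setup and counting match the paper: take $A_t[b,m]$ with $m=\binom{b}{t}$, so $n=m^{b+1}$, and \cref{lem:size-Aprime} gives $\Omega_t(nb^t)=\Omega(n(\log n/\log\log n)^t)$ ones provided $A_t$ is $P_t$-free.

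The gap is in the $P_t$-freeness argument. Your ``telescope along each row'' plan is not what the paper does, and as stated it is off already at the counting level: the top row of $P_t$ has only about $t/2$ ones beyond $\Col_0$, not $t$, so the two per-row telescopes together give $t+1$ differences, not $t$ in one row. More importantly, consecutive differences $c_3-c_1,\,c_5-c_3,\dots$ have no evident monotone structure on their leading nonzero positions, so it is unclear how you would pin down a ``common first-nonzero coordinate'' to compare across rows.

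The paper's mechanism is different and cleaner: compare \emph{every} column to $\Col_0$. Let $u_j$ be the position of the first nonzero of $c_j-c_0$ for $1\le j\le t+1$. By \cref{lem:A-simple-properties}, each $u_j$ lies in the support of $i_0$, a set of size exactly $t$, and the lexicographic order $c_0\le c_1\le\cdots\le c_{t+1}$ forces $u_1\ge u_2\ge\cdots\ge u_{t+1}$. Pigeonhole now yields some $j$ with $u_j=u_{j+1}$, and a slightly refined pigeonhole lets one choose $j$ so that the parity of $j$ matches the $\ieven/\iodd$ type of $i_0$ at $u_j$. For that $j$ the leading coordinates of $c_j-c_0$ and $c_{j+1}-c_0$ sit at the same position but take the values $m+1-s_0>m+1-s_1$ (odd $j$) or $s_1>s_0$ (even $j$), contradicting $c_j\le c_{j+1}$. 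So the heart of the proof is a single pigeonhole on $t+1$ positions living in a size-$t$ set, not an $S_0$-style case analysis stretched along the zigzag; your proposal identifies the right obstacle but does not yet contain this idea.
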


In the context of \cref{thm:Pt-upper-lower-bound,thm:Pt-lowerbound,thm:alternating-upper-bound}, $t$ is fixed, 
and the constants hidden by $O,\Omega$ depend on $t$.
This is in contrast to \cref{lem:size-Aprime}, 
where we made the dependence on $t$ explicit.

\begin{proof}
    We prove that $A_t[b,m]$ is $P_t$-free.  The lower bound follows
    by setting $m={b\choose t}$, in which case $A_t$ is an $n\times n$
    0--1 matrix, $n=m^{b+1}={b\choose t}^{b+1}$, 
    with $\Omega(m) = \Omega((\log n/\log\log n)^t)$ 1s per column, 
    on average.

    Suppose that $A_t$ were not $P_t$-free.
    Label the rows and columns of an occurrence of $P_t$ in $A_t$ as follows,
    where $\Row_0 = (s_0,r_0), \Col_0=(c_0,i_0)$, etc.
    
\begin{align*}
    P_t &= \begin{array}{l}
    \Row_0 \\ \Row_1
    \end{array}
    \left(\begin{array}{clclclccclc}
    \zero{\rb{4}{\hcm[-.1]$\Col_0$}}\bu&& \zero{\rb{4}{\hcm[-.1]$\Col_1$}}\bu &&\zero{\rb{4}{\hcm[-.1]$\Col_2$}}\phantom{\bu}&&\zero{\rb{4}{\hcm[-.1]$\Col_3$}}\bu&&\zero{\rb{4}{$\;\;\;\;\:\Col_{t+1}$}}&&\bu\;\\
    \bu&&&&\bu&&&\rb{2.4}{$\cdots$}&\bu&
    \end{array}\right).
\end{align*}

    Define $u_j\in [b]$ to be the position of the first non-zero of 
    $c_j - c_0$.  Note that $\Col_j$ has its 1 in row $\Row_{(j+1)\MOD 2}$.  By \cref{lem:A-simple-properties}, either $\ieven_0(u)=1$ and
    $(c_j - c_0)(u_j)=s_{(j+1)\MOD 2}$ or $\iodd_0(u)=1$ and $(c_j-c_0)(u_j)=m+1 - s_{(j+1)\MOD 2}$.  In either case $(c_j-c_0)(u_j)>0$,
    so from the ordering $c_0 \le c_1 \le \cdots \le c_{t+1}$ 
    we can conclude that
    \[
    u_1 \geq u_2 \geq \cdots\geq u_{t+1}.
    \]
    Since $\|i_0\|_1 =t$, by the pigeonhole principle there must exist a $j$ with $u_j=u_{j+1}$.  In fact, there must exist such a $j$ that also
    satisfies $j$ odd and $\iodd_0(u_j)=1$ or $j$ even and $\ieven_0(u_j)=1$.\footnote{Note that $u_1 = u_2$ is the last 1 of $i_0$, 
    or $u_2 = u_3$ is the 2nd last 1 of $i_0$, etc., 
    all of which satisfy the parity criterion.}
    If $j$ is odd then 
    \begin{align*}
        c_j-c_0         &= (0,\ldots,0,m+1-s_0,\ldots)\\
        c_{j+1}-c_0     &= (0,\ldots,0,m+1-s_1,\ldots),
    \intertext{which contradicts the order $c_j \leq c_{j+1}$ as $s_0<s_1$ by \cref{lem:A-simple-properties}.  If $j$ is even then}
        c_j-c_0         &= (0,\ldots,0,s_1,\ldots)\\
        c_{j+1}-c_0     &= (0,\ldots,0,s_0,\ldots),
    \end{align*}
    which also contradicts the order $c_j \leq c_{j+1}$.  Hence $A_t$ is $P_t$-free.
\end{proof}

\ignore{
\begin{proof}
    We prove that $A'[b,m,t]$ is $P_t$-free.  
    The lower bound follows by setting $m = b^t$, so $A'$ is 
    an $n\times n$ 0--1 matrix, $n = b^{bt^2+t}$, 
    with $\|A'\|_1 = \Omega(n\cdot m/(2.1)^t) = \Omega(n(\log n/\log\log n)^t)$.

    Suppose that $A'$ were not $P_t$-free.
    Label the rows and columns of an occurrence of $P_t$ in $A'$ as follows.
    
\begin{align*}
    P_t &= \begin{array}{l}
    \Row_0 \\ \Row_1
    \end{array}
    \left(\begin{array}{clclclccclc}
    \zero{\rb{4}{\hcm[-.1]$\Col_0$}}\bu&& \zero{\rb{4}{\hcm[-.1]$\Col_1$}}\bu &&\zero{\rb{4}{\hcm[-.1]$\Col_2$}}\phantom{\bu}&&\zero{\rb{4}{\hcm[-.1]$\Col_3$}}\bu&&\zero{\rb{4}{$\;\;\;\;\:\Col_{t+1}$}}&&\bu\;\\
    \bu&&&&\bu&&&\rb{2.4}{$\cdots$}&\bu&
    \end{array}\right)
\end{align*}

By \cref{lem:A-simple-properties}, $s_0 < s_1$.
Define $u_j\in [t]$ to be the first block where 
$i_0,i_j$ differ, i.e., $i_0(u_j) \neq i_j(u_j)$.  
By construction, for $j \in [2,t+1]$, 
\begin{align*}
c_{j} &= c_{j-1} + (r_{j\MOD 2} - c_{j-1}) + (c_0 - r_{j\MOD 2}) + (r_{j-1 \MOD 2} - c_0) + (c_j - r_{j-1\MOD 2})\\
    &= c_{j-1} + s_{j \MOD 2}(\Oneodd{i_{j-1}}-\Oneodd{i_0}) + (m-s_{j \MOD 2})(\Oneeven{i_{j-1}}-\Oneeven{i_0})\\ 
&\qquad\quad\, + s_{j-1 \MOD 2}(\Oneodd{i_0} - \Oneodd{i_{j}}) + (m-s_{j-1 \MOD 2})(\Oneeven{i_0} - \Oneeven{i_{j}}).
\end{align*}
By \cref{lem:A-simple-properties}, the first non-zero 
of $\Oneodd{i_{j-1}} - \Oneodd{i_0} + \Oneeven{i_{j-1}}-\Oneeven{i_0}$ (in block $u_{j-1}$) is negative, 
and since $c_{j} > c_{j-1}$ lexicographically, we must have $u_j\leq u_{j-1}$.
Thus,
\[ 
t \geq u_1\geq u_2 \geq \cdots\geq u_{t+1} \geq 1,
\]
so by the pigeonhole principle there 
must exist $j$ such that $u_j=u_{j+1}$.  
In fact, there must exist $j$ 
such that $u_j=u_{j+1}$ and $j + \mu_j + t \equiv 1\pmod 2$.\footnote{Note that $\mu_1 = \mu_2 = t$, or $\mu_2 = \mu_3 = t-1$, etc., all of which satisfy $j+\mu_j+t\equiv 1\pmod 2.$}
Depending on the parity of $j$ we have one of the two situations.

\begin{align*}
    &\begin{array}{l}
    \Row_0 \\ \Row_1
    \end{array}\left(\begin{array}{clclclccclc}
    \zero{\rb{4}{$\Col_0$}}\bu&& \zero{\rb{4}{$\Col_j$}}\bu &&\zero{\rb{4}{\hcm[-.1]$\Col_{j+1}$}}\phantom{\bu}&\\
    \bu&&&&\bu&
    \end{array}\right)
&&\begin{array}{l}
    \Row_0 \\ \Row_1
    \end{array}\left(\begin{array}{clclclccclc}
    \zero{\rb{4}{$\Col_0$}}\bu&& \zero{\rb{4}{$\Col_j$}}\phantom{\bu} &&\zero{\rb{4}{\hcm[-.1]$\Col_{j+1}$}}\bu &\\
    \bu&&\bu&&&
    \end{array}\right)\\
    &\mbox{Odd $j$, $u_j \equiv t \pmod 2$:}
    &&\mbox{Even $j$, $u_j \not\equiv t \pmod 2$:} 
\end{align*}
In both cases \cref{lem:A-simple-properties} implies that for $u=u_j$, 
$i_0(u)<i_j(u)$ and $i_0(u) < i_{j+1}(u)$.

Suppose $j$ is odd.  Define $\alpha=r_0(u,i_0(u)), \beta=r_0(u,i_j(u))$.
Then, according to the construction for blocks $u\equiv t \pmod 2$,
$c_j(u,i_0(u))=\alpha$ and 
$r_1 = \alpha - (m-s_0) + (m-s_1) = \alpha + s_0 - s_1$.
Since $i_{0}(u) < i_{j+1}(u)$, $c_{j+1}(u,i_0(u))$ is also equal 
to $\alpha+s_0-s_1 < \alpha$, contradicting the lexicographic order $\Col_j < \Col_{j+1}$.
See \cref{tab:Pt-lowerbound}, left.
When $j$ is even we define $\alpha = r_1(u,i_0(u)), \beta = r_1(u,i_j(u))$.
According to the construction for blocks $u\not\equiv t\pmod 2$, 
$c_j(u,i_0(u)) = \alpha$, $c_0(u,i_0(u))=\alpha-s_1$, $r_0(u,i_0(u))=\alpha-s_1+s_0$, and since $i_0(u) < i_{j+1}(u)$, $c_{j+1}(u,i_0(u))$ is also equal to $\alpha-s_1+s_0$, contradicting the lexicographic order $\Col_j < \Col_{j+1}$. 
See \cref{tab:Pt-lowerbound}, right.

\begin{table}
\begin{tabular}{rllcrll}
            & $i_0(u)$ & $i_j(u)$ &&& $i_0(u)$ & $i_j(u)$\\\cline{2-3}\cline{6-7}
$c_j$:      &   $\alpha$    & $\beta-(m-s_0)$   &\hcm[1.5]& $c_j$:   & $\alpha$          & $\beta - s_1$\\
$r_0$:      &   $\alpha$    & $\beta$           && $r_1$:   & $\alpha$          & $\beta$\\
$c_0$:      &   $\alpha-(m-s_0)$ & $\beta$      && $c_0$:   & $\alpha-s_1$      & $\beta$\\
$r_1$:      &   $\alpha + s_0 - s_1$ & $\beta$   && $r_0$:  & $\alpha-s_1+s_0$  & $\beta$\\
$c_{j+1}$:  &   $\alpha + s_0 - s_1$ & (contradiction) && $c_{j+1}$: & $\alpha-s_1+s_0$ & (contradiction)\\\cline{1-3}\cline{5-7}
\end{tabular}
\caption{Left: Odd $j$, $u_j=u \equiv t\pmod 2$.  
Right: Even $j$, $u \not\equiv t \pmod 2$.\label{tab:Pt-lowerbound}}
\end{table}

\end{proof}
}

\section{Upper Bounds}\label{sect:UpperBounds}

\cref{thm:alternating-upper-bound} covers the upper bound half of \cref{thm:Pt-upper-lower-bound}.  
Note the condition $t\geq 2$ cannot be strengthened, as
$\Ex(P_1,n)=\Theta(n\log n)$~\cite{FurediH92,BienstockG91,Tardos05}, 
not $O(n\log n/\log\log n)$.

\begin{theorem}\label{thm:alternating-upper-bound}
For $t\geq 2$, 
    $\Ex(P_t,n) = O(n(\log n/\log\log n)^t)$.
\end{theorem}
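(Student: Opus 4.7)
The plan is to prove the upper bound by induction on $t\ge 2$ via a recursive block decomposition. For the induction to flow smoothly, I would establish a rectangular analogue
\[
\Ex(P_t,n,m) = O\!\left((n+m)\left(\frac{\log(n+m)}{\log\log(n+m)}\right)^t\right),
\]
specializing to the square case when $n=m$. The base case $t=2$ would require its own argument, refining the classical bound $\Ex(P_1,n)=O(n\log n)$ by an extra factor of $\log n/\log\log n$.

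For the inductive step, given a $P_t$-free matrix $A$ of size $n\times m$, I would partition its columns into $k$ consecutive blocks of equal width $m/k$, with $k$ to be tuned (naturally $k = \Theta(\log m/\log\log m)$). Every 1 of $A$ is classified as \emph{block-interior} (its row contains at least two 1s within the same block) or \emph{block-solo} (the row has exactly one 1 in that block). Block-interior 1s may be bounded by recursively applying the inductive hypothesis inside each block. The block-solo 1s contract naturally to an $n\times k$ derived matrix $\widehat A$ containing at most one 1 per (row, block) cell. The structural core of the argument is a lemma forcing $\widehat A$ to avoid $P_{t-1}$ (or a related weaker pattern): any $P_{t-1}$ occurrence in $\widehat A$, combined with the in-block positions of the representatives and a stacked pair furnished by block-interior 1s in an appropriate block, would reconstitute a $P_t$ in $A$, contradicting $P_t$-freeness. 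The per-cell representative (leftmost, rightmost, or a refined priority) must be selected so this reconstruction is robust.

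Combining these contributions yields a recursion that, unrolled with $k$ chosen in concert with the structural lemma and the inductive bound on $\widehat A$, telescopes to the target $(\log m/\log\log m)^t$ factor. A rectangular induction rather than a square one is helpful because the recursion reduces $m$ (column count) while $n$ stays fixed, so the two arguments of $\Ex$ play asymmetric roles.

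The principal obstacle is extracting the extra $\log\log m$ factor that separates the target bound from the naive $O(n\log^t m)$ obtained by iterating \cref{lem:PachTardos-reductions}(A). A naive $k=2$ split with crossings bounded by $\Ex(P_{t-1},n,m)$ would overshoot by a full $\log\log m$. Closing this gap forces $k$ to be super-constant of size roughly $\log m/\log\log m$; the structural lemma on $\widehat A$ to be sharp in the sense that $\widehat A$ avoids exactly the right smaller pattern; and the choice of per-cell representatives to align with the $P_t$-reconstruction. Coordinating these three ingredients so that the additive term $\Ex(\text{smaller pattern},n,k)$ is smaller than $\Ex(P_{t-1},n,m)$ by the crucial factor of $\log\log m$ is the main technical challenge, and mirrors the $(\log n/\log\log n)^t$ density achieved by the $A_t[b,m]$ construction in \cref{thm:Pt-lowerbound}.
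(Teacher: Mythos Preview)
Your plan differs fundamentally from the paper's, and the central structural lemma you rely on is not true in the form you need. The obstruction is the stacked column of $P_t$ (its leftmost column has 1s in both rows). When you contract block-solo 1s to $\widehat A\in\{0,1\}^{n\times k}$, a ``stacked'' pair $\widehat A(r_0,B_0)=\widehat A(r_1,B_0)=1$ only says that rows $r_0,r_1$ each have a unique 1 somewhere in block $B_0$; those two 1s are generally in \emph{different} columns of $A$. Consequently a copy of $P_{t-1}$ in $\widehat A$ does not yield a copy of $P_{t-1}$ (let alone $P_t$) in $A$: you recover $t$ alternating 1s spread across distinct blocks but no common column carrying both rows. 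Your remark that the stacked pair is ``furnished by block-interior 1s in an appropriate block'' does not help either---``block-interior'' means one row has at least two 1s in a block, which is orthogonal to two rows sharing a column. So $\widehat A$ need not avoid $P_{t-1}$, nor any lighter pattern whose extremal function would close the $\log\log n$ gap; and the base case $t=2$, which you defer, already requires the full strength of the theorem and cannot be obtained by refining the $P_1$ bound.

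The paper does not induct on $t$ at all. Instead, for every 1 at $(r,c)$ it defines a sequence of \emph{landmark columns} $c<F\le a_1<b_1\le\cdots\le a_{t-1}<b_{t-1}\le L$, where $(a_j,b_j)$ is the widest gap between consecutive 1s in row $r$ inside $[F,a_{j+1}]$. Each 1 then receives three signatures: $\sig_0$ records $\lfloor\log_\zeta(F-c)\rfloor$ and the gap sizes $\lfloor\log_\zeta(b_j-a_j)\rfloor$; $\sig_1,\sig_2$ record the positions $a_j-c$, $b_j-c$ at a finer scale $1+\epsilon$ with $\epsilon=\Theta(1/\zeta)$, together with some auxiliary data. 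A four-step marking procedure marks, in each row or column, the last few 1s of each signature type. Four lemmas then show that any 1 surviving all four steps forces an explicit copy of $P_t$---the crucial point being that two 1s in the same column with matching fine signatures already share a genuine stacked column, and the landmark structure supplies the $t{+}1$ alternations. Counting signatures gives $O(n(\log_\zeta^t n+(\zeta\log n)^{t-1}))$, and the choice $\zeta=(\log n)^{1/t}$ balances the two terms to $O(n(\log n/\log\log n)^t)$. The two-scale signature (coarse $\zeta$ versus fine $1+\epsilon$) is exactly the device that manufactures the $\log\log n$ saving your recursion cannot produce.
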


Let $A$ be an $n\times n$, $P_t$-free matrix maximizing $\|A\|_1$. 
For each $A(r,c)=1$ we identify a number of \emph{landmark} 
column indices.

\begin{definition}[Landmark Columns]\label{def:landmark-columns}
With respect to some $A(r,c)=1$, the following column indices obey the following order whenever they exist.
\[
c < F \leq a_1 < b_1 \leq a_2 < b_2 \leq \cdots \leq a_{t-1} < b_{t-1} \leq L.
\]
\begin{itemize}
    \item $A(r,F)=A(r,L)=1$ are the first and last 1s in row $r$ following column $c$. 
    \item $A(r,a_{t-1})=A(r,b_{t-1})=1$ are consecutive 1s in row $r$ 
    such that $F\leq a_{t-1} < b_{t-1} \leq L$ and $b_{t-1} - a_{t-1}$ 
    is maximum.  
    In general, $A(r,a_j)=A(r,b_j)=1$ are consecutive 1s in row $r$ such that $F\leq a_j < b_j \leq a_{j+1}$ and $b_j-a_j$ is maximum.  (Break ties in a consistent way, say taking the first pair of consecutive 1's of maximal distance.) 
    \item If $A(r,c)$ is one of the last two 1s in row $r$, then $F$ and $L$ are not distinct indices and $a_1,b_1,\ldots,a_{t-1},b_{t-1}$ do not exist.  Whenever $F=a_j$, the indices $a_1,b_1,\ldots,a_{j-1},b_{j-1}$ do not exist.
\end{itemize}  
\end{definition}

We assign three \emph{signatures} to every 1 in $A$ 
that is not one of the last two 1s in its row.

\begin{definition}[Signatures]
Let $\zeta=\zeta(n)\ge t$ 
be a parameter (to be optimized later)
and $\epsilon \bydef \frac{1}{6(\zeta+1)(t+2)}$.
Each $A(r,c)=1$ is assigned \emph{signatures} $\sig_0(r,c),\sig_1(r,c),\sig_2(r,c)$.
Any piece of a signature that depends on undefined values (e.g., if $a_1,b_1$ do not exist)
is undefined.
\begin{description}
    \item[$\sig_0(r,c)$ : ] consists of the vector
    $(\floor{\log_\zeta(F-c)},\floor{\log_\zeta(b_1-a_1)},\ldots,\floor{\log_\zeta(b_{t-1}-a_{t-1})})$.
    \item[$\sig_1(r,c)$ : ] consists of two parts, a vector
    $(\floor{\log_{1+\epsilon}(a_1-c)},\ldots,\floor{\log_{1+\epsilon}(a_{t-1}-c)})$, and the position of each $\floor{\log_{1+\epsilon}(b_j-c)}$ relative to the elements of this vector: larger, equal, or smaller.
    \item[$\sig_2(r,c)$ : ] consists of three parts, a vector
\begin{align*}
    &\left(\floor{\log_{1+\epsilon}(b_1-c)},\; \floor{\log_{1+\epsilon}(b_2-c)},\; \ldots,\; \floor{\log_{1+\epsilon}(b_{t-1}-c)}\right),
    \intertext{the position of each $\floor{\log_{1+\epsilon}(a_j-c)}$ relative to the elements of this vector (as in $\sig_1$), and a vector}
    &\left(\min\left\{\floor{\frac{a_{2}-b_1}{(b_1-c)/2}},3t\right\},\; \ldots,\;
    \min\left\{\floor{\frac{a_{t-1}-b_{t-2}}{(b_{t-2}-c)/2}},3t\right\},\;
    \min\left\{\floor{\frac{L-b_{t-1}}{(b_{t-1}-c)/2}},3t\right\}\right).
\end{align*}
\end{description}
\end{definition}

\subsection{Structure of the Proof}\label{sect:structure-of-the-proof}

We will eventually prove that after the following 
4-step \emph{marking} procedure, there will be
no unmarked 1s in any $P_t$-free matrix $A$.

\begin{description}
\item[Step 1.] In each row, mark the last two 1s and the last 1 of each $\sig_0$-type.
\item[Step 2.] In each column, mark the last unmarked 1 of each $\sig_1$-type 
that satisfies Inequality~(\ref{eqn:F}), defined in \cref{lem:notlast-sig0-signature}.
\item[Step 3.] In each row, mark the last $t$ unmarked 1s of each $\sig_2$-type.
\item[Step 4.] In each column, mark the last $t$ unmarked 1s of each $\sig_2$-type.
\end{description}

The number of distinct $\sig_0$ signatures is $O(\log_\zeta^t n)$ while the number of $\sig_1$ and $\sig_2$ signatures is $O(\log_{1+\epsilon}^{t-1} n) = O((\zeta\log n)^{t-1})$, so the total number of marked 1s is
$O(n(\log_\zeta^t n + (\zeta\log n)^{t-1}))$.  
(Remember that $t = O(1)$ is constant.)
We choose $\zeta = (\log n)^{1/t}$ to roughly balance these 
contributions, and conclude that 
$\|A\|_1 = O(n(\log n/\log\log n)^t)$.

\subsection{The Proof}

\cref{lem:notlast-sig0-signature,lem:a-F-gap,lem:c-in-range-F-a1,lem:no-c-in-range-F-a1} will be used to prove that no unmarked 1s in $A$ 
remain after Steps 1--4.

\begin{lemma}\label{lem:notlast-sig0-signature}
Suppose that $A(r,c^*)=1$, having landmarks $(F,a_1,b_1,\ldots,L)$, 
is \emph{not} the last 1 in its row with its $\sig_0$-type.
Then for \underline{at least one} of the following $t$
inequalities, the relevant landmarks 
exist and the inequality is satisfied.
\begin{align}
a_1 - F     &> \fr{1}{3\zeta}(F - c^*), \tag{I.1}\label{eqn:F}\\
a_2 - b_1   &> \fr{1}{3\zeta}(b_1 - c^*),\tag{I.2}\label{eqn:b1}\\
            &\;\:\vdots\nonumber\\
a_{t-1} - b_{t-2}&> \fr{1}{3\zeta}(b_{t-2} - c^*),\tag{I.$t-1$}\label{eqn:bt-1}\\
L - b_{t-1}     &> \fr{1}{3\zeta}(b_{t-1}-c^*).\tag{I.$t$}\label{eqn:bt}
\end{align}
\end{lemma}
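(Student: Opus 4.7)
The plan is to prove the contrapositive: if every (I.$j$) with its landmarks defined fails, then no $c'>c^*$ in row $r$ shares $c^*$'s $\sig_0$. Write $\sig_0(c^*)=(x_0,\ldots,x_{t-1})$, so $F-c^*\in[\zeta^{x_0},\zeta^{x_0+1})$ and $\Delta_j:=b_j-a_j\in[\zeta^{x_j},\zeta^{x_j+1})$. Introduce $G_0=F-c^*$, $G_j=b_j-c^*$, and $\delta_{j-1}=a_j-b_{j-1}$ with conventions $b_0:=F$, $a_t:=L$. The failure of (I.$j$) reads $\delta_{j-1}\le G_{j-1}/(3\zeta)$, and unrolling $G_j=G_{j-1}+\delta_{j-1}+\Delta_j$ telescopes to
\[
G_k \;\le\; G_0\bigl(1+\tfrac{1}{3\zeta}\bigr)^k + \sum_{j=1}^k \Delta_j\bigl(1+\tfrac{1}{3\zeta}\bigr)^{k-j} \;<\; 1.4\,\zeta\sum_{j=0}^k \zeta^{x_j},
\]
valid for $\zeta\ge t$ (using $e^{1/3}<1.4$ and the bucket bounds $G_0<\zeta^{x_0+1}$, $\Delta_j<\zeta^{x_j+1}$).

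Suppose for contradiction that $c'>c^*$ in row $r$ has the same $\sig_0$. Since $c'\ge F$, it lies in one of the safe intervals $I_0=[F,a_1]$, $I_j=[b_j,a_{j+1}]$ for $1\le j\le t-2$, or $I_{t-1}=[b_{t-1},L]$; fix $k$ with $c'\in I_k$, and let $F',a_j',b_j',L'=L$ denote $c'$'s landmarks. The key structural fact is a \emph{landmark coincidence}: because $(a_{t-1},b_{t-1})$ is the leftmost maximum gap in $[F,L]$ and tie-breaking is consistent, a downward induction on the tier shows that whenever $F'\le a_j$ one has $(a_j',b_j')=(a_j,b_j)$. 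Iterating, $c'$'s tail landmarks (from tier $k+1$ upward in the interior case, tier $k+2$ upward in the boundary case) coincide with $c^*$'s, confining the ``novel'' landmarks of $c'$ to a short window.

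In the interior sub-case $c'<a_{k+1}$ (using $a_t:=L$), the novel landmarks $F',a_1',b_1',\ldots,a_k',b_k'$ fit in $[c',a_{k+1}]$, so the ordering yields the span inequality
\[
a_{k+1}-c' \;\ge\; (F'-c') + \sum_{j=1}^k \Delta_j' \;\ge\; \sum_{j=0}^k \zeta^{x_j}
\]
by $\sig_0$ matching. On the other hand $c'\ge b_k$ and the failure of (I.$k+1$) give $a_{k+1}-c'\le\delta_k\le G_k/(3\zeta)$; combining with the telescoping bound, $\sum_{j=0}^k \zeta^{x_j} \le G_k/(3\zeta) < (1.4/3)\sum_{j=0}^k \zeta^{x_j}$, a contradiction. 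The boundary sub-case $c'=a_{k+1}$ (with $k<t-1$) is analogous: $F'=b_{k+1}$ so $F'-c'=\Delta_{k+1}$, forcing $x_0=x_{k+1}$ via the first $\sig_0$ coordinate, and the novel landmarks fit in $[F',a_{k+2}]$ of length $\delta_{k+1}\le G_{k+1}/(3\zeta)$; the span argument yields $\sum_{j=1}^{k+1}\zeta^{x_j}\le G_{k+1}/(3\zeta)$, and the telescoping bound on $G_{k+1}$, together with $x_0=x_{k+1}$, again collides with this.

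The main obstacle is rigorously establishing the landmark coincidence, which requires verifying that leftmost tie-breaking together with the nested max-gap definitions really do propagate the equalities $(a_j',b_j')=(a_j,b_j)$ down to the correct tier. Missing landmarks---which occur when $F$ coincides with some $a_{j'}$---are handled automatically by the lemma's ``relevant landmarks exist'' clause, which simply restricts the argument to the surviving tiers without affecting the telescoping bound or the span inequality.
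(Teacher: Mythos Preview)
Your proof is correct and follows essentially the same route as the paper's. Both arguments locate $c'$ in one of the intervals $[F,a_1)$, $\{a_j\}$, or $[b_j,a_{j+1})$, establish that the tail of $c'$'s landmark vector coincides with that of $c^*$ (your ``landmark coincidence''), telescope the failed inequalities to bound $b_j-c^*$ by roughly $\zeta\sum_{i\le j}\zeta^{x_i}$ (your bound on $G_k$ is exactly the paper's display~(\ref{eqn:bj-c})), and then use the $\sig_0$-match to force enough width into the short window to yield a contradiction. The only organizational difference is that the paper argues directly---in each case assuming (I.1)--(I.$j$) fail and deriving (I.$j{+}1$)---whereas you package everything as a contrapositive with a uniform telescoping bound established up front; neither version gains anything the other lacks. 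Your treatment of missing landmarks is a bit terse, but the paper is equally informal about that edge case and the argument goes through for the same reason in both.
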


\begin{proof}
    Suppose that for $c'>c^*$, 
    $A(r,c')=1$ has the same $\sig_0$-type as $A(r,c^*)$,
    with landmarks $(F',a_1',b_1',\ldots,a_{t-1}',b_{t-1}',L')$.
    Then $c'$ lies in one of the following intervals.
    \[
    [F,a_1),[a_1,a_2),\ldots,[a_{j-1},a_{j}),\ldots, [a_{t-1},L).
    \]

    \paragraph{Case 1: $c'\in [F,a_1)$.}
    Clearly $A(r,c^*)$ and $A(r,c')$ share a suffix of the landmarks, specifically 
    $(a_1,b_1,\ldots,a_{t-1},b_{t-1},L)=(a_1',b_1',\ldots,a_{t-1}',b_{t-1}',L')$; 
    only $F' \in (F,a_1]$ is different.  
    Since $\floor{\log_\zeta(F-c^*)}=\floor{\log_\zeta(F'-c')}$, 
    it must be that $a_1-F \ge F'-c' > \fr{1}{\zeta}(F-c^*) > \fr{1}{3\zeta}(F-c^*)$.

    \paragraph{Case 2: $c' = a_j$.}  Since there are no 1s in row $r$ and the column interval $(a_j,b_j)$, $F'=b_j$ and the landmark vectors agree on the suffixes 
    $(a_{j+1},b_{j+1},\ldots,L) = (a_{j+1}',b_{j+1}',\ldots,L')$.
    We prove that at least one of Inequalities (I.1)--(I.$j+1$) is satisfied.
    If (I.1)--(I.$j$) are not satisfied, then
\begin{align}
    b_{j} - c^* &= (b_{j}-a_{j}) + (a_{j}-c^*)\nonumber\\
                &\le (b_{j}-a_{j}) + (1+\fr{1}{3\zeta})(b_{j-1}-c^*)\nonumber\\
                &\qquad \ldots\nonumber\\
                &\le (b_{j}-a_{j}) + (1+\fr{1}{3\zeta})(b_{j-1}-a_{j-1})
                    + \cdots + (1+\fr{1}{3\zeta})^{j-1}(b_1-a_1) + (1+\fr{1}{3\zeta})^j(F-c)\nonumber\\
                &< (1+\fr{1}{3\zeta})^t[(b_j-a_j) + \cdots + (b_1-a_1) + (F-c^*)].\label{eqn:bj-c}
\end{align}
    According to the common $\sig_0$ type 
    and the fact that $(a_j,b_j)=(c',F')$ we have
\begin{align}
    \floor{\log_\zeta(b_j' - a_j')} + \cdots + \floor{\log_\zeta(b_1' - a_1')} 
    &= \floor{\log_\zeta(b_j - a_j)} + \cdots + \floor{\log_\zeta(b_1-a_1)},\label{eqn:bjprime-ajprime-1}\\
    \floor{\log_\zeta(b_j'-a_j')} &= \floor{\log_\zeta(b_j-a_j)} = \floor{\log_\zeta(F'-c')} = \floor{\log_\zeta(F-c^*)}.\label{eqn:bjprime-ajprime-2}
\end{align}
    Since all the landmarks $F',a_1',\ldots,b_j'$ lie in the range $[b_j,a_{j+1}]$,
    \cref{eqn:bj-c,eqn:bjprime-ajprime-1,eqn:bjprime-ajprime-2} imply that
\begin{align*}
    2(a_{j+1}-b_j) &\geq \fr{1}{\zeta}[(b_j-a_j) + \cdots + (b_1-a_1) + (F-c^*)]\\
                &> \fr{1}{\zeta}(1+\fr{1}{3\zeta})^{-t}(b_j-c^*),
\intertext{and since $\zeta\geq t$, 
we have $(1+\fr{1}{3\zeta})^{-t} > e^{-1/3} > 2/3$.  Thus,}
    a_{j+1}-b_j &> \fr{1}{3\zeta}(b_j-c^*),
\end{align*}
i.e., Inequality (I.$j+1$) is satisfied.

\paragraph{Case 3: $c' \in [b_j,a_{j+1})$.} (Here $a_t \bydef L$.)  
The argument is identical to Case 2, 
except that since $c'$ is also in $[b_j,a_{j+1})$, 
we can substitute for
\cref{eqn:bjprime-ajprime-1,eqn:bjprime-ajprime-2}
the following inequality.
\begin{align*}
\lefteqn{\floor{\log_\zeta(b_j' - a_j')} + \cdots 
    + \floor{\log_\zeta(b_1' - a_1')} + \floor{\log_\zeta(F'-c')}}\\
    &= \floor{\log_\zeta(b_j - a_j)} + \cdots + \floor{\log_\zeta(b_1-a_1)} + \floor{\log_\zeta(F-c)}.
\end{align*}
Thus, $a_{j+1}-b_j > \fr{1}{\zeta}(1+\fr{1}{3\zeta})^{-t}(b_j-c^*) > \fr{2}{3\zeta}(b_j-c^*)$, thereby satisfying Inequality (I.$j+1$).
\end{proof}

\begin{lemma}\label{lem:a-F-gap}
Suppose $A(r_0,c^*)=A(r_1,c^*)=1$ 
have the same $\sig_1$-type for some indices $r_0<r_1$ and $c^*$,
and both satisfy Inequality~(\ref{eqn:F}).
Then $A$ contains $P_t$.
\end{lemma}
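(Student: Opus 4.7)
The plan is to construct an explicit embedding of $P_t$ into $A$, using $c^*$ as the column playing the role of column $0$ of $P_t$. For $i \in \{0,1\}$, write $F^{(i)}, a_j^{(i)}, b_j^{(i)}, L^{(i)}$ for the landmarks of $(r_i, c^*)$ as in \cref{def:landmark-columns}.

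First I will observe that $\epsilon = \frac{1}{6(\zeta+1)(t+2)}$ is substantially less than $\frac{1}{3\zeta}$. By the $\sig_1$-type agreement, $\lfloor\log_{1+\epsilon}(a_j^{(0)}-c^*)\rfloor = \lfloor\log_{1+\epsilon}(a_j^{(1)}-c^*)\rfloor$ for every $j$, so $a_j^{(0)}-c^*$ and $a_j^{(1)}-c^*$ lie in a common $(1+\epsilon)$-bucket, and the bucket-level relative positions of the $b_j^{(i)}$'s with respect to these $a_i$-buckets agree as well. Rewriting Inequality~\eqref{eqn:F} as $F^{(i)}-c^* < \frac{3\zeta}{3\zeta+1}(a_1^{(i)}-c^*)$ and chaining with the bucket bound $a_1^{(j)}-c^* \geq (1+\epsilon)^{k}$ where $k=\lfloor\log_{1+\epsilon}(a_1^{(0)}-c^*)\rfloor$, the condition $\epsilon<\frac{1}{3\zeta}$ yields the key preliminary inequality $F^{(i)} < a_1^{(j)}$ for all $i,j\in\{0,1\}$.

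Next I build the embedding column by column: $c_0 := c^*$, $c_1 := F^{(0)}$. For $c_2$ (a row-$r_1$ 1 strictly between $F^{(0)}$ and $c_3$), I case split on the relative order of $F^{(1)}, F^{(0)}, a_1^{(0)}, a_1^{(1)}$. Setting $c_3 := a_1^{(0)}$ works whenever $F^{(1)}\in(F^{(0)}, a_1^{(0)})$ or $a_1^{(1)}\in(F^{(0)}, a_1^{(0)})$; the remaining sub-case is $F^{(1)} \leq F^{(0)}$ \emph{and} $a_1^{(1)} \geq a_1^{(0)}$, for which I split further on whether $\lfloor\log_{1+\epsilon}(b_1^{(0)}-c^*)\rfloor$ equals $\lfloor\log_{1+\epsilon}(a_1^{(0)}-c^*)\rfloor$ or is strictly larger. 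In the equal-bucket sub-case, the row-$r_1$ consecutive 1s $(x,y)$ straddling $(F^{(0)}, a_1^{(0)})$ have gap $y-x\geq a_1^{(0)}-F^{(0)} > \tfrac{1}{3\zeta+1}(a_1^{(0)}-c^*)$, which forces $b_1^{(1)}-a_1^{(1)}\geq y-x$; but in the equal-bucket scenario $b_1^{(1)}-a_1^{(1)}$ is bounded by $\epsilon(a_1^{(0)}-c^*) \ll \tfrac{1}{3\zeta+1}(a_1^{(0)}-c^*)$, a contradiction. In the strictly-larger sub-case, $a_1^{(1)} < b_1^{(0)}$ by another bucket argument, so $c_2 := a_1^{(1)}$ and $c_3 := b_1^{(0)}$ suffice.

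For the later columns $c_4,\ldots,c_{t+1}$ I alternate using landmarks $b_j^{(0)}, a_{j+1}^{(0)},\ldots$ from row $r_0$ and the corresponding row-$r_1$ landmarks, verifying strict ordering via the same kind of bucket comparisons supplied by the full $\sig_1$-agreement. The main obstacle throughout will be the case analysis when different landmarks share a common $(1+\epsilon)$-bucket: the $\sig_1$-signature only records bucket-level relationships, so pinning down fine-grained order requires the max-gap defining property of $(a_j,b_j)$ together with the quantitative gap bound from~\eqref{eqn:F}. With $\epsilon$ chosen sufficiently small relative to $1/\zeta$, I expect these checks to complete and yield the desired embedding of $P_t$.
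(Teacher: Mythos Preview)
Your proposal heads in a reasonable direction but misses the structural dichotomy that makes the argument go through, and the ``later columns'' portion is too sketchy to constitute a proof.

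The paper's proof splits into two cases according to the \emph{second} component of the common $\sig_1$-signature: whether for some index $j$ the value $\floor{\log_{1+\epsilon}(b_j-c^*)}$ coincides with $\floor{\log_{1+\epsilon}(a_j-c^*)}$, i.e.\ whether $b_j$ and $a_j$ share a slab. In Case~1 (they do, for some $j$), the max-gap $b_j-a_j$ is at most $\epsilon\Delta$ with $\Delta=(1+\epsilon)^{i_j}$, so by the defining property of $(a_j,b_j)$ every interval of width $\epsilon\Delta$ in row $r_i$ between $F^{(i)}$ and $\slab(i_j)$ contains a 1. Combined with the bound $a_j-F > \Delta/(6(\zeta+1))$ (which follows from~\eqref{eqn:F} and $i_1\le i_j$), one obtains more than $(6(\zeta+1)\epsilon)^{-1} > t+1$ alternations between the two rows in that interval, hence $P_t$. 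In Case~2 (no such $j$), one has $i_1 < i_2 < \cdots < i_{t-1}$ and $F$ lies strictly before $\slab(i_1)$, so the landmarks themselves supply a direct alternating embedding $c^* < F < a_1' < a_2 < a_3' < \cdots$.

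Your column-by-column scheme is essentially attempting Case~2 everywhere, invoking the max-gap property only to derive a contradiction that rules out one narrow sub-configuration at the $c_2/c_3$ step. This does not scale to $c_4,\ldots,c_{t+1}$: if, for instance, $i_1=i_2=\cdots=i_{t-1}$ (all $a_j$'s in a single slab---perfectly consistent with a valid $\sig_1$-type), then there simply are not $t-1$ well-separated landmark positions available, and no amount of bucket arithmetic recovers them. The remedy is not to eliminate sub-cases by contradiction but to \emph{exploit} the small max-gap as in Case~1: it forces both rows to be dense with 1s over an interval of length $\gg (t+1)\epsilon\Delta$, and the alternating pattern comes for free. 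Once you branch on whether some $b_j$ shares a slab with $a_j$, each branch is short.
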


\begin{proof}
Let the landmarks for $A(r_0,c^*)$ and $A(r_1,c^*)$ be
$(F,a_1,b_1,\ldots,L)$ and $(F',a_1',b_1',\ldots,L')$, respectively.
Call \emph{$\slab(i)$} the interval of columns
$\left[c^* + (1+\epsilon)^{i}, c^* + (1+\epsilon)^{i+1}\right)$.
If the first vector in the common $\sig_1$-signature is 
$(i_1,\ldots,i_{t-1})$, then $a_j,a_j' \in \slab(i_j)$ for all $j\in [t-1]$.  (Since Inequality~(\ref{eqn:F}) is satisfied,
all $a_j,a_j'$ exist.)

\paragraph{Case 1:} Suppose that according to the second part of the common $\sig_1$-signature, there is an index $j$ such that 
$b_j,b_j' \in \slab(i_j)$.  
Then $a_j - c^*$ is at least the distance from $c^*$ to $\slab(i_j)$, which
is $\Delta \bydef (1+\epsilon)^{i_j}$, 
and $b_j-a_j$ is at most $\epsilon\Delta$,
the width of $\slab(i_j)$.
Since Inequality~(\ref{eqn:F}) is satisfied and $a_1-c^* < (1+\epsilon)^{i_1+1}$, 
$F-c^* < (1+\fr{1}{3\zeta})^{-1}(1+\epsilon)^{i_1+1}$.
Thus,
\begin{align*}
    a_j - F = (a_j-c^*) - (F-c^*) 
            &\geq \Delta - (1+\fr{1}{3\zeta})^{-1}(1+\epsilon)^{i_1+1}\\
            &\geq \Delta\left(1-\fr{(1+\epsilon)3\zeta}{3\zeta+1}\right) & \mbox{($i_1 \leq i_j$)}\\
            &> \frac{\Delta}{6(\zeta+1)}. & \mbox{($\epsilon < 1/(6\zeta)$)}
\end{align*}
By the definition of 
$a_j,b_j$, 
every interval of 
width $\epsilon\Delta$ in row $r_0$ (resp., $r_1$) 
between $F$ (resp., $F'$) and $\slab(i_j)$ contains a 1.
Thus, rows $r_0$ and $r_1$ contain an alternating pattern 
of length $(6(\zeta+1)\epsilon)^{-1} > t+1$, and together 
with column $c^*$ this forms an instance of $P_t$.
See the diagram below.

\bigskip
\begin{tabular}{rlcl|c|c|lr|l}
      &           &           &         & $\slab(i_1)$ & & \multicolumn{2}{c|}{$\slab(i_j)$} & \hcm[1.5]\\
      & $c^*$       &           & $F$\hcm[1]     & $a_1$ &           & $a_j$\ \  &  \ \ $b_j$ & \\\hline
$r_0$ & \bu       & \hcm[2.5] & \bu     & \bu   &   \hcm[3] & \bu\ \ & \ \ \bu& \\\hline
\multicolumn{3}{c}{\istrut{5}} 
& 
\multicolumn{6}{l}{\zero{\underleftrightarrow{\hcm[2.3] > (t+1)\epsilon\Delta\hcm[2.3]} 
\,\underleftrightarrow{\hcm[.4]\phantom{()}\epsilon\Delta\phantom{()}\hcm[.4]}}}\\
\end{tabular}

\paragraph{Case 2:} According to the second part of the common $\sig_1$-signature, 
$b_j, b'_j \not\in \slab(i_j)$ for all $j\in[t-1]$, hence $i_1 < i_2 < \cdots < i_{t-1}$.
Since $a_1 - F > (F-c^*)/(3\zeta)$ and $\epsilon < 1/(3\zeta)$, $F$ is not in $\slab(i_1)$.
Then we have an instance of $P_t$ on rows $r_0,r_1$ and columns 
$c^* < F < a_1' < a_2 < \cdots < \{a_{t-1},a_{t-1}'\} < \{b_{t-1},b_{t-1}'\}$,
where the columns selected from $\{a_{t-1},a_{t-1}'\}$ and $\{b_{t-1},b_{t-1}'\}$ 
depend on the parity of $t$.
The underlined points in the figure below form an 
instance of $P_t$ if $t$ is even.

\bigskip 
\centerline{\begin{tabular}{rclccl|cc|l|cc|l|cc|lcc}
        &    &\hcm[.7]&  & &\hcm[.4]& \multicolumn{2}{c|}{$\slab(i_1)$} & \hcm[.9] & \multicolumn{2}{c|}{$\slab(i_2)$} &  & \multicolumn{2}{c|}{$\slab(i_{t-1})$} & \hcm[.1] & &\\
        & $c^*$   &\hcm[.7]&  \multicolumn{2}{c}{$F/F'$} &\hcm[.5]& \multicolumn{2}{c|}{$a_1/a_1'$} & \hcm[1] & \multicolumn{2}{c|}{$a_2/a_2'$} &  & \multicolumn{2}{c|}{$a_{t-1}/a_{t-1}'$} & \hcm[.2] & \multicolumn{2}{c}{$b_{t-1}/b_{t-1}'$}\\\hline
$r_0$   & \ul{\bu}   &&& \ul{\bu} && \bu &      &&& \ul{\bu} &\hcm[.4]$\cdots$\hcm[.4]&   \bu &&&     & \ul{\bu}\\
$r_1$   & \ul{\bu}   && \bu &&&     & \ul{\bu}  && \bu &&\hcm[.4]$\cdots$\hcm[.4]&  & \ul{\bu} &&  \bu \\\hline
\end{tabular}}

\bigskip 

Cases 1 and 2 are exhaustive, so we conclude $A$ contains $P_t$.
\end{proof}

\begin{lemma}\label{lem:c-in-range-F-a1}
    Suppose there are two rows $r_0<r_1$ 
    and three columns $c^*<c,c'$ such that
    \begin{itemize}
        \item $A(r_0,c^*)=A(r_0,c)=A(r_1,c^*)=A(r_1,c')=1$ and all have a common $\sig_2$-type.
        \item $A(r_0,c^*)$ and $A(r_1,c^*)$ are each not the last 1 in their row with their respective $\sig_0$-types.  They \emph{do not} satisfy Inequality~(\ref{eqn:F}).
        \item Let the landmarks for $A(r_0,c^*),A(r_1,c^*)$ be $(F,a_1,b_1,\ldots,L)$ and $(F',a_1',b_1',\ldots,L')$, respectively.
        Both $c\in [F,a_1)$ and $c' \in [F',a_1')$.
    \end{itemize}
    Then $A$ contains $P_t$.    
\end{lemma}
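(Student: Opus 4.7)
The plan is to construct an explicit copy of $P_t$ in $A$ with the two rows $r_0 < r_1$ playing the roles of $\Row_0$ and $\Row_1$ respectively. Column $c^*$ plays $\Col_0$ (contributing 1s in both rows), so it remains to produce $t+1$ additional columns strictly greater than $c^*$ that alternate in row membership, starting with a 1 in row $r_0$. Without loss of generality, assume $c \le c'$; the other case is symmetric, obtained by swapping the roles of $r_0$ and $r_1$ together with a parity-matched alternative embedding.

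The candidate embedding I would try first is
\[
c^* \;<\; c \;<\; a_1' \;<\; b_1 \;<\; a_2' \;<\; b_2 \;<\; \cdots,
\]
terminating at $b_{t/2}$ if $t$ is even and at $a_{(t+1)/2}'$ if $t$ is odd. Row $r_0$ supplies the 1s at $c^*, c, b_1, b_2,\ldots$, while row $r_1$ supplies the 1s at $c^*, a_1', a_2', \ldots$; the count works out to exactly $t+2$ columns, matching $P_t$.

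The required column inequalities are $c < a_1'$, $a_j' < b_j$, and $b_j < a_{j+1}'$. The common $\sig_2$-type of $A(r_0,c^*)$ and $A(r_1,c^*)$ places the landmarks $a_j, a_j'$ in a common slab $\alpha_j$ and $b_j, b_j'$ in a common slab $\beta_j$, with the ordering $\alpha_1 \le \beta_1 \le \alpha_2 \le \beta_2 \le \cdots$ shared between the two rows. When these slab indices are strictly increasing, all comparisons follow immediately from slab separation, exactly in the spirit of Case 2 of \cref{lem:a-F-gap}. In the degenerate cases where some adjacent indices coincide, I would invoke the third component of $\sig_2$, the bucketed gap-ratios $\min\{\lfloor 2(a_{j+1}-b_j)/(b_j-c^*) \rfloor, 3t\}$: a positive entry forces $a_{j+1} - b_j$ to be a constant fraction of $b_j - c^*$, restoring strict slab separation; if every entry is zero, then every gap is relatively minuscule and we fall into an analog of Case 1 of \cref{lem:a-F-gap}, whereby all consecutive 1s within the packed slabs lie within $\epsilon\Delta$ of each other, producing a long alternating sub-pattern in one slab that, combined with $c^*$, yields $P_t$ directly.

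The main obstacle will be the tie-breaking when $c$ lies in the same slab as $a_1'$. In this case the slab information alone does not determine the order of $c$ and $a_1'$, so we must exploit the additional hypothesis that $A(r_0, c)$ shares its $\sig_2$-type with $A(r_0, c^*)$. Since both 1s lie in row $r_0$ with the same landmark suffix, this equality forces the shift $c - c^*$ to be small relative to every $b_j - c^*$, essentially pinning $c$ to be a negligible perturbation of $c^*$ in slab coordinates. From this we can deduce $c < a_1'$ whenever $a_1' \ge a_1$; otherwise, if $a_1' < a_1$, we perform a mirror construction using $c'$ (which is then at least as large as $c$ while the primed landmarks are smaller) with the rows' parity roles swapped. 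Carrying out this boundary analysis carefully, and handling every combination of slab-collapse positions uniformly, is the most delicate part of the argument.
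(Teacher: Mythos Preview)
Your proposal has a genuine gap rooted in a misreading of the $\sig_2$ signature. The first vector of $\sig_2$ records the slab index $i_j=\lfloor\log_{1+\epsilon}(b_j-c^*)\rfloor$ for each $b_j$, but for each $a_j$ the second part of $\sig_2$ only records the \emph{position} of $\lfloor\log_{1+\epsilon}(a_j-c^*)\rfloor$ relative to the list $(i_1,\dots,i_{t-1})$---larger, equal, or smaller. A common $\sig_2$-type therefore forces $b_j,b_j'$ into a common slab but does \emph{not} force $a_j,a_j'$ into one; $a_j$ and $a_j'$ could sit in different slabs, both strictly between $\slab(i_{j-1})$ and $\slab(i_j)$. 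Your alternating chain $c<a_1'<b_1<a_2'<b_2<\cdots$ relies on comparing each $a_{j+1}'$ to the unprimed $b_j$, and without a common slab for the $a$'s these cross-row comparisons cannot be read off from $\sig_2$. The third vector in $\sig_2$ does not rescue this: it bounds $a_{j+1}-b_j$ within \emph{one} row and says nothing about $a_{j+1}'$ versus $b_j$ across rows (and in fact the paper does not use the third vector in this lemma at all---it is reserved for \cref{lem:no-c-in-range-F-a1}).

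A second gap is that you never invoke the hypothesis that each $A(r_i,c^*)$ is not the last of its $\sig_0$-type yet fails Inequality~(I.1). Via \cref{lem:notlast-sig0-signature} this forces some Inequality~(I.$j$), $j\ge2$, to hold. In the configuration where every $a_j$ collapses into $\slab(i_{j-1})$ (your ``all entries zero'' scenario), Inequalities~(I.2)--(I.$t{-}1$) are impossible because $a_j-b_{j-1}$ is at most a slab width, so (I.$t$) must hold: $L-b_{t-1}>\tfrac{1}{3\zeta}(b_{t-1}-c^*)$, which pushes $L$ strictly beyond $\slab(i_{t-1})$, and $L$ (or $L'$) supplies the final column. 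You never mention $L$, and without it that case has no $(t{+}2)$nd column. The paper's embedding accordingly looks quite different from yours: it first shows that $F,F'$ lie \emph{before} $\slab(i_1)$ (the same-row equality $\lfloor\log_{1+\epsilon}(b_1-c^*)\rfloor=\lfloor\log_{1+\epsilon}(b_1-c)\rfloor$ forces $c-c^*$, hence $F-c^*$, below the width of $\slab(i_1)$), then takes $c^*,F,b_1',b_2,\dots$ alternating through the $b$'s---whose slabs \emph{are} shared---and inserts exactly one extra column, either some $a_j$ or $a_j'$ known to lie strictly between $\slab(i_{j-1})$ and $\slab(i_j)$, or $L$ or $L'$ beyond $\slab(i_{t-1})$, to reach $t+2$ columns.
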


\begin{proof}
    Since $c \in [F,a_1)$, 
    the landmarks for $A(r_0,c)$
    are $(\tilde{F},a_1,\ldots,b_{t-1},L)$, 
    i.e., they only differ from the landmarks for $A(r_0,c^*)$ in $F/\tilde{F}$.

    Let the first vector of the common $\sig_2$-signature 
    be $(i_1,\ldots,i_{t-1})$, i.e., if $i_j$ is defined then $b_j,b_j'$ exist and are in $\slab(i_j)$, where the slabs are defined as in the proof of \cref{lem:a-F-gap}.
    Since $\floor{\log_{1+\epsilon}(b_1-c^*)}=\floor{\log_{1+\epsilon}(b_1-c)}$, 
    this implies $F-c^* \leq c-c^* < \epsilon(1+\epsilon)^{i_1}$ 
    (the width of $\slab(i_1)$)
    and that
    $F$ lies strictly before $\slab(i_1)$. We similarly have $F'-c^*<\epsilon(1+\epsilon)^{i_1}$.

\paragraph{Case 1:} According to the second part of the common 
$\sig_2$-signature, 
for some index $j\in [t-1]$, $a_j,a_j'$ 
exist and are in $\slab(i_j)$.
    Then $b_j - a_j < \epsilon(1+\epsilon)^{i_j} \bydef \epsilon\Delta$ and by definition of $a_j,b_j$, every interval of width $\epsilon\Delta$ in row $r_0$ between $F$ and $\slab(i_j)$ has a 1.  The same is true in row $r_1$ (with $F'$ in place of $F$), so
    there is an alternating pattern of length 
    $(\Delta - \epsilon(1+\epsilon)^{i_1})/\epsilon\Delta \geq (1-\epsilon)/\epsilon > t+1$ between rows $r_0,r_1$, and together with column $c^*$, 
    this forms an instance of $P_t$.

    From now on we assume we are not in Case~1, so in particular, we have $i_1<i_2<\cdots<i_{t-1}$.

\paragraph{Case 2:} 
According to the common $\sig_2$-signature, 
for some index $j\in [t-1]$, $a_j$ is strictly between $\slab(i_{j-1})$ and $\slab(i_j)$.  Then $A$ contains an instance of $P_t$ on rows $r_0,r_1$ and the 
$t+2$ columns $c^*,F,b_1',\cdots,\{b_{j-1},b_{j-1}'\},\{a_j,a_j'\},\{b_j,b_j'\},\ldots,\{b_{t-1},b_{t-1}'\}$,
where the columns selected from 
$\{a_j,a_j'\},\{b_{t-1},b_{t-1}'\}$, etc.
depend on the parities of $j$ and $t$.

\paragraph{Case 3:} According to the common $\sig_2$-signature,
for every $j\in [2,t-1]$, $a_j,a_j' \in \slab(i_{j-1})$.  
By \cref{lem:notlast-sig0-signature}, at least one of Inequalities~(\ref{eqn:F})--(\ref{eqn:bt}) are satisfied, but Inequality~(\ref{eqn:F}) is not satisfied by assumption.
Since $a_j - b_{j-1}$ is less than the width of $\slab(i_{j-1})$, we cannot
satisfy any of Inequalities~(\ref{eqn:b1})--(\ref{eqn:bt-1}), hence Inequality~(\ref{eqn:bt}) is satisfied: 
$L-b_{t-1} > \fr{1}{3\zeta}(b_{t-1}-c^*)$, implying
that $L$ lies outside $\slab(i_{t-1})$ since $\epsilon < 1/(3\zeta)$. 
Thus, $A$ contains an instance of $P_t$ on rows $r_0,r_1$ 
and the $t+2$ columns 
$c^*,F,b_1',\ldots,\{b_{t-1},b_{t-1}'\},\{L,L'\}$.
\end{proof}

\begin{lemma}\label{lem:no-c-in-range-F-a1}
    Suppose there are two rows $r_0 < r_1$ 
    and three columns $c^* < c,c'$
    such that
    \begin{itemize}
        \item $A(r_0,c^*)=A(r_0,c)=A(r_1,c^*)=A(r_1,c')=1$ and all have a common $\sig_2$-type.
        \item Let the landmarks for $A(r_0,c^*),A(r_1,c^*)$ be $(F,a_1,b_1,\ldots,L)$ and $(F',a_1',b_1',\ldots,L')$, respectively (with the first few $a_i,b_i$ potentially undefined).
        For some $\ell\in [t-1]$, 
        $c \in [b_\ell,a_{\ell+1})$ 
        and $c' \in [b_\ell', a_{\ell+1}')$,
        where $a_{t}\bydef L, a_{t}' \bydef L'$.
    \end{itemize}
    Then $A$ contains $P_t$.   
\end{lemma}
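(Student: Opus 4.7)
The plan is to mirror the proof of Lemma~\ref{lem:c-in-range-F-a1} closely, since the set-up is parallel: four 1s at $(r_0,c^*),(r_1,c^*),(r_0,c),(r_1,c')$ with a common $\sig_2$-type, the only new feature being that $c,c'$ lie in a middle interval $[b_\ell,a_{\ell+1})$ rather than the leading interval $[F,a_1)$. I would reuse the slab decomposition $\slab(i)=[c^*+(1+\epsilon)^i,\,c^*+(1+\epsilon)^{i+1})$ defined with respect to $c^*$. From the first part of the common $\sig_2$-signature, $b_j,b_j'\in\slab(i_j)$ for every $j\in[t-1]$; the second part forces the $a_j,a_j'$ to have the same relative position to these slabs; and the third part forces the gap ratios $\min\{\floor{2(a_{j+1}-b_j)/(b_j-c^*)},3t\}$ to coincide across the two rows.

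The first technical step is to understand the landmarks $(\tilde F,\tilde a_1,\tilde b_1,\ldots,\tilde L)$ of $A(r_0,c)$: since $c\in[b_\ell,a_{\ell+1})$, the upper landmarks are unchanged ($\tilde a_j=a_j,\tilde b_j=b_j$ for $j>\ell$), while the lower ones live in $[\tilde F,a_{\ell+1}]$. Matching the first vector of $\sig_2$ at indices $j>\ell$ forces $c-c^*<\epsilon(b_{\ell+1}-c)$, so $c$ is close to $b_\ell$ on the scale of $b_{\ell+1}-c^*$; the same holds for $c'$. I would then run a three-case analysis exactly as in Lemma~\ref{lem:c-in-range-F-a1}. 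In \textbf{Case 1} (some $a_j,a_j'\in\slab(i_j)$), the max-gap property gives $(t+1)$ alternating 1s between $F/F'$ and $\slab(i_j)$, and together with $c^*$ these form $P_t$. In \textbf{Case 2} (some $a_j$ lies strictly between $\slab(i_{j-1})$ and $\slab(i_j)$), $i_1<i_2<\cdots<i_{t-1}$, and $P_t$ is built from $c^*,F,a_1',a_2,\ldots,a_{t-1}/a_{t-1}',b_{t-1}/b_{t-1}'$, alternating as needed.

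The main obstacle is \textbf{Case 3} (every $a_j,a_j'\in\slab(i_{j-1})$ for $j\ge 2$): then $a_{j+1}-b_j$ is bounded by the slab width, so inequalities (I.2)--(I.$t-1$) of Lemma~\ref{lem:notlast-sig0-signature} all fail, forcing either (\ref{eqn:F}) or (I.$t$) to hold. If (I.$t$) holds, $L$ lies outside $\slab(i_{t-1})$ and the construction $c^*,F,b_1',\ldots,b_{t-1},L$ from Lemma~\ref{lem:c-in-range-F-a1} goes through essentially unchanged. The harder sub-case is when only (\ref{eqn:F}) holds, since Lemma~\ref{lem:c-in-range-F-a1} explicitly assumed (\ref{eqn:F}) fails. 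Here I expect the extra 1s at $c,c'$ inside $[b_\ell,a_{\ell+1})$ to be essential: they supply row-$r_0$ and row-$r_1$ entries in the middle of the pattern not pinned to the canonical landmarks, giving enough flexibility to complete an alternating $P_t$ that exploits the large $a_1-F$ gap at the left end together with $c,c'$ (or with $b_\ell,b_\ell'$, which coincide with $c,c'$ up to the small error established in the first step) in the middle. Verifying that this combination always yields the required strict column ordering and correct parity alternation is the delicate bookkeeping step.
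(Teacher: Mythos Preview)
Your plan mirrors Lemma~\ref{lem:c-in-range-F-a1} too closely and misses the mechanism that actually drives this lemma. The paper's proof does \emph{no} case split on the second part of $\sig_2$; it uses only the \emph{third} part (the gap-ratio vector), and only at index~$\ell$. The decisive comparison is between $A(r_0,c^*)$ and $A(r_0,c)$, two 1s in the \emph{same} row, not across rows as you suggest. Writing $\Delta_0=b_\ell-c^*$ and $\mu\Delta_0=a_{\ell+1}-b_\ell$, the $\ell$-th gap-ratio component for $A(r_0,c^*)$ is $\min\{\lfloor 2\mu\rfloor,3t\}$. Because $c\in[b_\ell,a_{\ell+1})$, the landmarks of $A(r_0,c)$ agree with those of $A(r_0,c^*)$ on the suffix $(a_{\ell+1},\dots,L)$ but have a new $b_\ell''>c\ge b_\ell$; the corresponding ratio $(a_{\ell+1}-b_\ell'')/(b_\ell''-c)$ is at most $(1+\epsilon)\mu-1<\mu-\tfrac12$ whenever $\mu<3t/2$. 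Equality of the two $\sig_2$-signatures therefore forces the cap $3t$ to be attained, i.e.\ $\mu\ge 3t/2$. The max-gap property of $(a_\ell,b_\ell)$ then puts a 1 in row $r_0$ in every width-$\Delta_0$ window of $[a_\ell,a_{\ell+1}]$, and the symmetric argument in row $r_1$ (using $c'$) does the same there; since $\mu\ge t+1$, this yields $t+1$ alternations and, with column $c^*$, a copy of $P_t$.

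Your three-case analysis has concrete gaps. Case~3 invokes Lemma~\ref{lem:notlast-sig0-signature}, but the hypotheses of the present lemma, unlike those of Lemma~\ref{lem:c-in-range-F-a1}, do \emph{not} assert that $A(r_0,c^*)$ is other than the last 1 of its $\sig_0$-type, so none of (I.1)--(I.$t$) is available. You also leave the (I.1)-only sub-case openly unresolved. Even Case~1 does not transfer: in Lemma~\ref{lem:c-in-range-F-a1} the assumption $c\in[F,a_1)$ together with the $\sig_2$-match forces $F-c^*<\epsilon(1+\epsilon)^{i_1}$, which is exactly what makes $[F,\slab(i_j)]$ long enough for $t+1$ alternations; with $c\in[b_\ell,a_{\ell+1})$ you have no such control on $F$. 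The remedy is not more bookkeeping but a different idea: the third vector in $\sig_2$ was put there precisely so that this lemma can be proved in one shot.
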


\begin{proof}
    Since $c \in [b_\ell,a_{\ell+1})$, 
    the landmarks for $A(r_0,c)$ are $(F'',a_1'',\ldots,b_{\ell}'',a_{\ell+1},\ldots,a_{t-1},b_{t-1},L)$, 
    i.e., they agree on the suffix $(a_{\ell+1},\ldots,L)$ with the landmarks of $A(r_0,c^*)$.
    Let $(i_1,\ldots,i_{t-1})$ be the first component of the common $\sig_2$-signature.
    Define $\Delta_0 \in [(1+\epsilon)^{i_\ell},(1+\epsilon)^{i_\ell+1})$ 
    and $\mu>0$ as:
    \begin{align*}
        \Delta_0 &= b_\ell - c^*,\\
    \mu \Delta_0 &= a_{\ell+1} - b_\ell.
    \end{align*}

\bigskip 
\begin{tabular}{rlclcl|cc|lclc}
        &&&&          &&     \multicolumn{2}{c|}{$\slab(i_\ell)$} &&\\
        &&$c^*$&\hcm[.5]& $a_\ell$ &\hcm[1]& $b_\ell$   & $c$  &\hcm[2.2]&     $b_\ell''$       &\hcm[4.2]&   $a_{\ell+1}$\\\hline
        &\istrut{4}&\zero{\underleftrightarrow{\hcm[1.53]\phantom{\mu}\Delta_0\hcm[1.53]} 
\underleftrightarrow{\hcm[4.57]\mu\Delta_0\hcm[4.57]}} &&&&&&&&&\\
$r_0$   &&\bu&&    \bu   && \bu        & \bu   &&     \bu              &&   \bu\\
&\istrut[5]{0}&&&\zero{\underleftrightarrow{\hcm[.64]< \Delta_0\hcm[.64]} 
\hcm[.76]\underleftrightarrow{\hcm[1.33]\approx\Delta_0\hcm[1.33]}} &&&&&&&\\\hline
\end{tabular}
\bigskip 

    The relevant component of the third part of the common $\sig_2$-signature 
    is $\min\left\{\floor{\frac{a_{\ell+1}-b_\ell}{(b_\ell-c^*)/2}}, 3t\right\} 
    = \min\{\floor{2\mu},3t\}$.
    Observe that because of the first part of the common $\sig_2$-signature, 
    $b_\ell''-c \in ((1+\epsilon)^{-1}\Delta_0,(1+\epsilon)\Delta_0)$.
    Thus,
    \[
    \frac{a_{\ell+1}-b_\ell''}{b_{\ell}''-c} 
    \leq 
    \frac{\mu \Delta_0 }{(1+\epsilon)^{-1}\Delta_0}-1 = (1+\epsilon)\mu - 1.
    \]
    Note that since $\epsilon < 1/(3t)$, 
    $(1+\epsilon)\mu -1 < \mu-1/2$ whenever $\mu<3t/2$.
    Since the $\sig_2$-signatures of $A(r_0,c^*),A(r_0,c)$ 
    are identical, it must be that
    \[
    \min\{\floor{2\mu},3t\} = \min\left\{\floor{\frac{a_{\ell+1}-b_\ell}{(b_\ell-c^*)/2}}, 3t\right\} 
    =
    \min\left\{\floor{\frac{a_{\ell+1}-b_\ell''}{(b_\ell''-c)/2}}, 3t\right\}
    = 3t.
    \]
    Since $b_\ell - a_\ell < \Delta_0$ is maximum
    among distances of consecutive 1s in the range $[F,a_{\ell+1}]$, 
    there must be a 1 in row $r_0$ in every interval of width 
    $\Delta_0$ that starts in the range 
    $[a_\ell,a_{\ell+1}] \supset [c^* +(1+\epsilon)^{i_\ell}, c^* +(\mu+1)(1+\epsilon)^{i_\ell}]$.
    The same is true of row $r_1$ with respect to some 
    $\Delta_1 \in [(1+\epsilon)^{i_\ell},(1+\epsilon)^{i_\ell+1})$.
    Since $\mu\ge 3t/2 \geq t+1$, 
    there are $t+1$ alternations between 
    rows $r_0,r_1$ in the interval 
    $[c^* + (1+\epsilon)^{i_\ell}, c^* +(\mu+1)(1+\epsilon)^{i_\ell}]$.
    Together with column $c^*$ this forms an instance of $P_t$.
\end{proof}

We are now in a position to prove \cref{thm:alternating-upper-bound},
using \cref{lem:notlast-sig0-signature,lem:a-F-gap,lem:c-in-range-F-a1,lem:no-c-in-range-F-a1}.

\begin{proof}[Proof of Theorem~\ref{thm:alternating-upper-bound}]
Let $A$ be an $n\times n$ matrix.
We apply the following 4-step marking process 
and prove that either every 1 is marked,
or $A$ contains an instance of $P_t$.
\begin{description}
\item[Step 1.] In each row, mark the last two 1s and the last 1 of each $\sig_0$-type.
\item[Step 2.] In each column, mark the last unmarked 1 of each $\sig_1$-type 
that satisfies Inequality~(\ref{eqn:F}).
\item[Step 3.] In each row, mark the last $t$ unmarked 1s of each $\sig_2$-type.
\item[Step 4.] In each column, mark the last $t$ unmarked 1s of each $\sig_2$-type.
\end{description}
Suppose that there exists some $A(r_0,c^*)=1$ that is still unmarked after Steps 1--4.  
Because it is unmarked after Step 1, \cref{lem:notlast-sig0-signature} implies that 
it must satisfy at least one of Inequalities (\ref{eqn:F})--(\ref{eqn:bt}).  
If it satisfies Inequality~(\ref{eqn:F}) then Step 2 must have marked some other $A(r_1,c^*)=1$, $r_1\neq r_0$, of the same $\sig_1$-type that also satisfies Inequality~(\ref{eqn:F}).  In this case $A$ contains $P_t$, by \cref{lem:a-F-gap}.
Thus, after Step 2, we may assume that $A(r_0,c^*)$ 
does not satisfy Inequality~(\ref{eqn:F}).

Steps 3 and 4 imply that there exists
rows $r_0 < r_1 < \cdots < r_{t}$ 
and for each $i\in [0,t]$, columns
$c^* < c_{i,1} < c_{i,2} < \cdots < c_{i,t}$
such that
$A(r_i,c^*)=A(r_i,c_{i,j})=1$ all have the common $\sig_2$-type of $A(r_0,c^*)$.
Each $A(r_i,c_{i,j})$ is classified by which of the following sets 
contains $c_{i,j}$, the landmarks $(F^i,a_1^i,b_1^i,\ldots,L^i)$ being defined w.r.t.~$A(r_i,c^*)$.
\[
[F^i,a_1^i), \; \{a_1^i\}, \; [b_1^i,a_2^i), \; 
\{a_2^i\}, \; [b_2^i,a_3^i) \; \ldots, \; \{a_{t-1}^i\}, \; [b_{t-1}^i,L^i).
\]
There are only $t-1$ singleton 
sets $\{a_1^i\},\ldots,\{a_{t-1}^i\}$, 
so by the pigeonhole principle,
for each $i\in [0,t]$ there exists a $j(i) \in [t]$ such that 
$c_{i,j(i)} \in [F^i,a_1^i) \cup [b_1^i,a_2^i) \cup \cdots \cup [b_{t-2}^i,a_{t-1}^i) \cup [b_{t-1}^i,L^i)$.
By the pigeonhole principle again, 
there must exist two rows $r_{i},r_{i'}$ such that
$c_{i,j(i)},c_{i',j(i')}$ have the same classification.
If $c_{i,j(i)} \in [F^i,a_1^i)$ and 
$c_{i',j(i')} \in [F^{i'},a_1^{i'})$ then \cref{lem:c-in-range-F-a1} implies
that $A$ contains $P_t$.  
If, for some $\ell\in [t-1]$, 
$c_{i,j(i)} \in [b_\ell^i,a_{\ell+1}^i)$ and
$c_{i',j(i')} \in [b_\ell^{i'},a_{\ell+1}^{i'})$  
(by definition $a_t^i \bydef L^i$), 
then \cref{lem:no-c-in-range-F-a1} 
implies $A$ contains $P_t$.  

The cases considered above are exhaustive, 
hence if $A$ is $P_t$-free, there can be
no unmarked 1s left in $A$ after Steps 1--4.

The number of distinct $\sig_0$-, $\sig_1$-,
and $\sig_2$-signatures are 
$O(n\log_\zeta^t n), O(n\log_{1+\epsilon}^{t-1} n)$,
and $O(n\log_{1+\epsilon}^{t-1} n)$, respectively,
so the number of 1s marked by Steps 1--4 is
\begin{align*}
O(n\cdot (\log_\zeta^t n +  \log_{1+\epsilon}^{t-1} n))
&= O\left(n\cdot\left((\log n/\log\zeta)^t + (\zeta\log n)^{t-1}\right)\right). & \mbox{(Recall $\epsilon = \Theta(1/\zeta)$.)}
\end{align*}
We choose $\zeta = (\log n)^{1/t}$ and 
conclude that 
$\Ex(P_t,n) = O(n(\log n/\log\log n)^t)$.
\end{proof}

\begin{remark}
    Note that if $A$ is a rectangular $n\times m$ matrix, 
    Steps 1 and 3 mark 
    $O(n(\log_{\zeta}^t m + \log_{1+\epsilon}^{t-1} m))$ 1s
    whereas Steps 2 and 4 mark 
    $O(m\log_{1+\epsilon}^{t-1} m)$ 1s.
    Thus, if $n>m$ then we still pick $\zeta = (\log m)^{1/t}$
    and get the upper bound $\Ex(P_t,n,m) = O(n(\log m/\log\log m)^t)$,
    whereas if $n < m/\log m$, we would pick $\zeta = O(1)$
    and get the upper bound 
    $\Ex(P_t,m/\log m,m) = O(m\log^{t-1} m)$.
    This shows another qualitative 
    difference between the behavior
    of $P_1$-free and $P_t$-free, $t\ge 2$, 
    rectangular matrices.
    Pettie~\cite[Appendix A]{Pettie10a} proved that 
    for $n\geq m$, $\Ex(P_1,n,m) = \Theta(n\log_{1+n/m} m)$ and
    for $m\geq n$, $\Ex(P_1,n,m) = \Theta(m\log_{1+m/n} n)$.
    I.e., whenever $\max\{n/m, m/n\} 
    = \poly(\log(nm))$, we only lose a $\Theta(\log\log n)$-factor in the density of $P_1$-free matrices, 
    but can lose an $\Omega(\log n/\poly(\log\log n))$-factor 
    in the density of $P_t$-free matrices.
\end{remark}

\section{Conclusion}\label{sect:conclusion}

The foremost open problem in forbidden 0--1 patterns
is \emph{still} to understand the range of 
possible extremal functions for acyclic patterns. 
In the past it has been valuable to advance conjectures
of \emph{varying strength} (implausibility)~\cite{FurediH92,PachTardos06}.
Here we present \emph{weak} and \emph{strong} 
variants of the central conjecture.

\begin{conjecture}\label{conj:Acyclic}
    Let $\mathcal{P}_{\text{acyclic}}$ be the class of 
    acyclic 0--1 patterns.
    \begin{description}
    \item[Weak Form.] For all $P\in \mathcal{P}_{\text{acyclic}}$, $\Ex(P,n) = n^{1+o(1)}$.  (Cf.~\cite{Tardos05,PachTardos06}.)
    \item[Strong Form.] For all $P\in \mathcal{P}_{\text{acyclic}}$, there exists a constant $C_P$ such that $\Ex(P,n) = O(n2^{C_P \sqrt{\log n}})$.
    \end{description}
\end{conjecture}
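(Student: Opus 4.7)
The plan is to focus on the strong form of Conjecture~\ref{conj:Acyclic}, since it implies the weak form and its target bound $n2^{C_P\sqrt{\log n}}$ is precisely what Theorem~\ref{thm:PT-refutation} (lower bound) and the \Korandi--Tardos--Tomon--Weidert upper bound on $S_0$ jointly suggest. The first step is to identify a pattern-invariant $d(P)\in\mathbb{N}$ that plays the role of the degeneracy class in \cite{KorandiTTW19} but is finite for \emph{every} acyclic $P$, and then to prove the bound $\Ex(P,n)\le n2^{C_{d(P)}\sqrt{\log n}}$. A natural candidate is a notion of ``cut depth'': the minimum $d$ such that $P$ can be decomposed by a sequence of $d$ cuts, each horizontal or vertical and each crossed by at most a bounded number of 1-segments, into single-row/single-column pieces. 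Unlike the one-directional cut version of \cite{KorandiTTW19}, mixing horizontal and vertical cuts is essential if one hopes to cover every acyclic $P$ with bounded $d$.

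Second, I would try to extend the signature-and-marking framework of Theorem~\ref{thm:alternating-upper-bound} so that, instead of a constant number of landmark scales per 1 as in the $P_t$ analysis, one tracks a hierarchy of $\Theta(\sqrt{\log n})$ nested scales, each absorbing one level of the decomposition. Each 1 in an extremal $P$-free matrix $A$ receives signatures at every scale; an unmarked 1 that survives all marking rounds must admit, at some scale, a signature-match in the same row (or column), which one leverages — exactly as in Lemmas~\ref{lem:a-F-gap}--\ref{lem:no-c-in-range-F-a1} — to assemble a copy of $P$ piece by piece along the cut decomposition. The choice $\zeta=2^{\Theta(\sqrt{\log n})}$ (rather than $(\log n)^{1/t}$ as in Theorem~\ref{thm:alternating-upper-bound}) balances the signature count against the recursion depth, giving the target exponent.

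The main obstacle is twofold. Locally, one must verify that the signatures encode enough geometric information to force a copy of $P$ whenever two signatures coincide; in the $P_t$ analysis this was essentially planar and used only two rows, and it becomes far more delicate for a general acyclic $P$ with many rows and columns whose embedded 1s must be coordinated across several scales simultaneously. Globally — and this is the deeper obstruction — Theorems~\ref{thm:PT-refutation} and \ref{thm:denser-construction} together show that no absolute constant $C_P$ suffices, so the induction must carry a pattern-specific parameter (such as $d(P)$) whose growth is carefully controlled. Any attempt to treat $P$ as a ``generic'' acyclic forest in the style of the Pach-Tardos reductions (Lemma~\ref{lem:PachTardos-reductions}) will hit the same wall that blocked $S_0,S_1$ for two decades: one must simultaneously eliminate weight-1 rows and weight-1 columns while respecting the $n2^{\Omega(\sqrt{\log n})}$ lower bound constructed here, and it is not clear that a clean reduction rule of the Pach-Tardos type even exists at this scale.
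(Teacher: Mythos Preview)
The statement you are addressing is \cref{conj:Acyclic}, which the paper explicitly presents as an \emph{open conjecture} in the Conclusion; there is no proof in the paper to compare against. The paper discusses the conjecture's plausibility and suggests avenues of attack (generalizing \Korandi{} et al.'s method to handle mixed vertical/horizontal cuts, handling patterns like $T_0,T_1$), but does not claim a proof of either the weak or strong form.

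Your proposal is not a proof but a research outline, and you acknowledge this yourself in the final paragraph by listing unresolved obstacles. The concrete gaps are substantial. First, you do not define the parameter $d(P)$ precisely, nor do you establish that it is finite for every acyclic $P$; indeed, the pretzel pattern $T_0$ admits no horizontal or vertical cut separating the 1s of a single row or column, so a cut-depth invariant of the kind you sketch would already be undefined for $T_0$ without further work. Second, the proposed extension of the signature-and-marking framework of \cref{thm:alternating-upper-bound} from two rows to an arbitrary acyclic $P$ is entirely speculative: the lemmas you cite (\cref{lem:a-F-gap,lem:c-in-range-F-a1,lem:no-c-in-range-F-a1}) exploit the very specific two-row alternating structure of $P_t$, and you offer no mechanism for coordinating landmark columns across many rows and columns simultaneously. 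Third, you correctly observe that \cref{thm:denser-construction} forces $C_P$ to depend on $P$, but you do not indicate how $d(P)$ would control the constant in the exponent, nor how the balance $\zeta=2^{\Theta(\sqrt{\log n})}$ would survive the recursion. In short, the proposal identifies reasonable directions that overlap with the paper's own suggestions, but it does not close any of the gaps that make \cref{conj:Acyclic} an open problem.
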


One way to begin to tackle the {\bfseries Weak Form} of 
\cref{conj:Acyclic} is to
prove that there exists an absolute 
constant $\epsilon>0$
such that $\Ex(P,n)=O(n^{2-\epsilon})$, 
for all $P\in \mathcal{P}_{\text{acyclic}}$.
Another is to generalize 
\Korandi{} et al.'s~\cite{KorandiTTW19} method to handle 
patterns like $T_1$ (which can be decomposed using
\emph{both} vertical and horizontal cuts) and 
ultimately to patterns like the pretzel $T_0$, 
for which there is no horizontal/vertical 
cut that separates the 1s in a single column/row.
The {\bfseries Strong Form} of the 
conjecture should be considered more plausible 
in light of~\cite{KucheriyaT23}. 

Regardless of the status of \cref{conj:Acyclic}, it would be of great interest to characterize the {\bfseries Polylog} echelon: those 
$P$ with $\Ex(P,n) = O(n\log^{C_P} n)$.

\medskip 

The \emph{applications} of extremal 0--1 matrix theory in combinatorics~\cite{Klazar00,MarcusT04}, graph theory~\cite{GuillemotM14,BonnetGKTW21,BonnetKTW22}, and data structures~\cite{Pettie10a,ChalermsookGKMS15,ChalermsookGJAPY23,ChalermsookPY24} tend to use patterns that are rather low in the hierarchy, usually {\bfseries Linear} or {\bfseries Quasilinear}, 
so there is ample motivation to understand the boundary 
between these two echelons.

\begin{conjecture}\label{conj:Light-Linear}
    Let $\mathcal{Q}$ contain $Q_3,Q_3'$, and their horizontal and vertical reflections.
    If $P$ is light and $\mathcal{Q}$-free then $\Ex(P,n)=O(n)$.
    \begin{align*}
Q_3  &= {\left(\begin{array}{cccc}
\bu&&\bu\\
&\bu&&\bu
\end{array}\right),}
&
Q_3' &= {\left(\begin{array}{cccc}
\bu&&\\
&&\bu\\
&\bu&&\bu
\end{array}\right).}
\end{align*}
\end{conjecture}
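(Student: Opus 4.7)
The plan is to prove \cref{conj:Light-Linear} by induction on $\|P\|_1$, using a structural decomposition that reduces every light $\mathcal{Q}$-free pattern to strictly smaller ones via the composition rules of Keszegh~\cite{Keszegh09} and Pettie~\cite{Pettie-SoCG11}. Since $P$ is light, identify it with the sequence $\sigma(P)\in[k]^l$ whose $j$-th letter is the row index of the unique 1 in column $j$. In this language, $\mathcal{Q}$-freeness forbids (a) every four-term alternating subsequence $abab$ in \emph{both} row-orders of $a,b$, and (b) every four-term nested subsequence of the form $acbc$ or $abca$ on three distinct letters, in all of its reflection variants. These are precisely the two families of four-letter configurations that drive the $\alpha(n)$-blowup in generalized Davenport--Schinzel theory, so forbidding all of them simultaneously should leave $\sigma(P)$ with enough structure to admit a clean top-level cut.

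For the inductive step, I would try to locate such a cut by analyzing the first/last-occurrence profile of each row in $\sigma(P)$. The $Q_3$-type prohibitions prevent any letter $a$ from both preceding and following a pair of occurrences of another letter $b$ (in either row-order), while the $Q_3'$-type prohibitions rule out any nested three-letter interleaving. Combined, these should force one of two situations: either a prefix--suffix split $\sigma=\sigma_1\sigma_2$ whose alphabets are disjoint apart from a single shared ``pivot'' letter (yielding a $P = P_1\oplus P_2$ decomposition and hence $\Ex(P,n)\leq\Ex(P_1,n)+\Ex(P_2,n)$ by Keszegh's rule), or a contiguous ``bubble'' whose alphabet is disjoint from the letters on either side, which can be excised via Pettie's $\otimes$-rule. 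Since row/column deletion preserves both lightness and $\mathcal{Q}$-freeness, the two pieces remain in the inductive class, and one obtains $\Ex(P,n)=O(n)$.

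The principal obstacle is the \emph{decomposition lemma} itself: proving that every non-trivial light $\mathcal{Q}$-free pattern genuinely admits such a cut. A priori nothing rules out an irreducible $\mathcal{Q}$-free pattern on many rows in which neither a prefix--suffix split nor a contiguous bubble exists, and a careful case analysis on the geometry of first and last occurrences of each letter will almost certainly be required. A secondary technical difficulty is verifying Pettie's ``legal'' hypothesis (the inner block must be non-ascending or non-descending) automatically whenever the decomposition is of $\otimes$-type; this should follow from forbidding specific reflections of $Q_3'$, but the derivation has to be done explicitly. Finally, the sharpness of $\Ex(\{Q_3,Q_3'\},n)=2n\alpha(n)+O(n)$ shows that dropping any single reflection from $\mathcal{Q}$ already forces the extremal function above linear, so the argument cannot be slack: every case of the decomposition lemma must invoke the full set of forbiddances in $\mathcal{Q}$. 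Overcoming this tightness---and, in effect, showing that $\mathcal{Q}$ really does exhaust the minimally non-linear light patterns up to reflection---is where I expect the real work of the proof to lie.
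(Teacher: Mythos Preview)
The statement you are addressing is \cref{conj:Light-Linear}, which the paper poses as an \emph{open conjecture} in the conclusion, not a theorem. There is no proof in the paper to compare against; on the contrary, the text immediately following the conjecture says it ``has not even been confirmed for all weight-5 light patterns,'' and singles out the oscillating patterns $O_t$ (with only $O_2$ known to be linear) as concrete unresolved instances.

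Your proposal is a strategy outline, not a proof, and you say as much: the entire argument rests on a decomposition lemma guaranteeing that every nontrivial light $\mathcal{Q}$-free pattern admits an $\oplus$- or $\otimes$-split into strictly smaller light $\mathcal{Q}$-free pieces, and you concede that establishing this lemma ``is where I expect the real work of the proof to lie.'' That is precisely the missing step. The pattern $O_3$ is light and $\mathcal{Q}$-free, yet one checks directly that it has no $\oplus$ split (for every choice of the shared row $r$, some 1 in a row above $r$ lies to the right of some 1 in a row below $r$), and no $\otimes$ decomposition for it has been found---indeed, the paper notes that only the variant of $O_3$ without the repeated column is known to be linear~\cite{Fulek09}. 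Handling even $O_3$ by your scheme would already be a new result. Until the decomposition lemma is actually proved, what you have written is a reformulation of the conjecture in the language of~\cite{Keszegh09,Pettie-SoCG11}, not a proof of it.
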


\cref{conj:Light-Linear} has not even been confirmed for all 
weight-5 light patterns, though most have been classified;
see~\cite[Lemma 5.1]{Tardos05}, Fulek~\cite[Theorem 4]{Fulek09},
Pettie~\cite[Theorem 2.3]{Pettie-GenDS11}, and~\cite{MarcusT04,Pettie15-SIDMA,ChalermsookPY24}.
One way to think about classifying light $\mathcal{Q}$-free 
$P$ is to ignore any consecutive repeated columns in $P$,
then measure how many rows have at least two 1s.
If this number is zero then 
$P$ is a permutation matrix (possibly with repeated columns) and $\Ex(P,n)=O(n)$~\cite{MarcusT04,Geneson09}.
If $P$ has one such row, then the remaining 1s must be arranged in a permutation $P'$ constrained to the boxed regions in the figure below.
\[
P=\left(\begin{array}{|c|c|c|c|c|}
\cline{3-3}\multicolumn{2}{c|}{}
&
\rb{-2.5}{$P'$}
&\multicolumn{2}{c}{}\\
\multicolumn{2}{c|}{} &&\multicolumn{2}{c}{}\\\cline{3-3}\cline{1-1}\cline{5-5}
&\multicolumn{3}{c|}{} &\\
\hcm[1.2] &\multicolumn{1}{c}{\bu}&\multicolumn{1}{c}{}&\bu&\hcm[1.2]\\
&\multicolumn{3}{c|}{} &\\\cline{1-1}\cline{3-3}\cline{5-5}
\multicolumn{2}{c|}{} & \hcm[1.2] & \multicolumn{2}{c}{}\\
\multicolumn{2}{c|}{} & \hcm[1.5] & \multicolumn{2}{c}{}\\\cline{3-3}
\end{array}\right).
\]

At the other extreme, there are $\mathcal{Q}$-free patterns 
in which all but one row contain two non-consecutive 1s,
but they are all contained in the oscillating patterns $(O_t)$.
Only $O_2$ is known to be linear~\cite{FurediH92}.
Fulek~\cite{Fulek09} proved that the version of $O_3$
without the repeated column is linear.
\[
O_2 = \left(\begin{array}{cccc}
&\bu&\bu&\\
\bu&&&\bu
\end{array}\right),
\;\;\;\;
O_3 = \left(\begin{array}{cccccc}
&\bu&&&\bu&\\
\bu&&&&&\bu\\
&&\bu&\bu&&
\end{array}\right),
\;\;\;\;
O_4 = \left(\begin{array}{cccccccc}
&&&\bu&\bu&&&\\
&\bu&&&&&\bu&\\
\bu&&&&&&&\bu\\
&&\bu&&&\bu&&
\end{array}\right).
\]
There is a class of light $\mathcal{Q}$-free $P$
containing a row with \emph{three} non-consecutive 1s, 
but patterns in this class 
are highly constrained in their structure.
Each must be of the following form, up to reflection.
\[
P = \left(\begin{array}{ccccccc}
\cline{3-3}
\multicolumn{2}{c|}{}
&
\multicolumn{1}{c|}{\hcm[2]}\\
\multicolumn{2}{c|}{}
&
\multicolumn{1}{c|}{B}\\
\multicolumn{2}{c|}{}
&
\multicolumn{1}{c|}{}\\\cline{3-3}\cline{7-7}\cline{1-1}
\multicolumn{1}{|c|}{\hcm[2]}
&
\multicolumn{5}{c|}{}
&
\multicolumn{1}{c|}{\hcm[2]}\\
\multicolumn{1}{|c|}{}
&
\multicolumn{5}{c|}{}
&
\multicolumn{1}{c|}{}\\
\multicolumn{1}{|c|}{A} &\bu&&\bu&&\bu&\multicolumn{1}{|c|}{D}\\
\multicolumn{1}{|c|}{\hcm[2]}
&
\multicolumn{5}{c|}{}
&
\multicolumn{1}{c|}{}\\
\multicolumn{1}{|c|}{\hcm[2]}
&
\multicolumn{5}{c|}{}
&
\multicolumn{1}{c|}{}\\\cline{1-1}\cline{5-5}\cline{7-7}
\multicolumn{4}{c|}{} & \multicolumn{1}{c|}{\hcm[2]}\\
\multicolumn{4}{c|}{} & \multicolumn{1}{c|}{C}\\
\multicolumn{4}{c|}{} & \multicolumn{1}{c|}{}\\\cline{5-5}
\end{array}\right),
\]
where $B,C\neq 0$, $A,D$ are permutations avoiding the weight-3 row, and $B,C$ are $\left(\begin{array}{@{}c@{}c@{}c@{}}\bu&&\bu\\&\bu\end{array}\right)$-free
and 
$\left(\begin{array}{@{}c@{}c@{}c@{}}&\bu\\\bu&&\bu\end{array}\right)$-free,
respectively.  In particular, there cannot be a second row
(intersecting $A,B,C,$ or $D$) that has three non-consecutive 1s.
There are no light $\mathcal{Q}$-free 
patterns containing four non-consecutive 1s.

\newcommand{\etalchar}[1]{$^{#1}$}

%\bibliographystyle{alpha}
%\bibliography{references}

\appendix

\section{Refutation of the F\"uredi et al.\ Conjecture for Hypergraphs}\label{appendix:multidimensional}

We start with defining some simple terms. For the standerd definition of \emph{vertex-ordered hypergraph} and \emph{order-isomorphism}, see \cite{FurediJKMV21}. Define $\ExH^r(H,n)$ to be the maximum number of edges 
in an $r$-uniform, vertex-ordered 
hypergraph not containing any 
subgraphs order-isomorphic 
to an $r$-uniform, vertex-ordered $H$.
The \emph{interval chromatic number} of $H$, denoted $\chi_<(H)$, is the smallest number of intervals
that $V(H)$ can be partitioned into (w.r.t.~the order on $V(H)$)
such that each hyperedge intersects $r$ distinct intervals.
As in the \Erdos-Stone-Simonovits theorem~\cite{ErdosS46,ErdosS66},
$\ExH^r(H,n) = \Theta(n^r)$ if $\chi_<(H)>r$.
A hypergraph $H$ is called a \emph{forest} if
there is a peeling order $e_1,\ldots,e_{|E(H)|}$
where for all $i<|E(H)|$, there exists an $h>i$ such that
$e_i \cap \bigcup_{j>i} e_j \subset e_h$.

A closely related concept is the generalization of extremal function $\Ex$ from 2-dimensional matrices to $r$-dimensional matrices.  If $P\in\{0,1\}^{k_1\times \cdots \times k_r}$, 
$A\in\{0,1\}^{n_1\times \cdots \times n_r}$, we say $P\prec A$ if there are index sets $(I_i)$, $I_i\subseteq [n_i]$, $|I_i|=k_i$, 
such that $P$ is entry-wise dominated by the submatrix $A[I_1,\ldots,I_r]$ of $A$ restricted to $I_1,\ldots,I_r$.  
Define 
$\Ex^r(P,n) = \max\{\|A\|_1 \:\mid\: A\in \{0,1\}^{n^r} \text{ and } P\nprec A\}$.  
We can view $P$ as an ordered hypergraph $H(P)$ with interval chromatic number $\chi_<(H(P))=r$ by ordering the 
$k_1+\cdots+k_r$ vertices 
primarily by their axis, then according to their coordinate within the axis. 
Each $P(i_1,\ldots,i_r)=1$ 
corresponds to a 
hyperedge $\{i_1^{(1)},\ldots,i_r^{(r)}\}$, where $i_j^{(j)}$ is the vertex in axis $j$ and position $i_j$.

\begin{lemma}[{Cf.~\cite[Thm.~2]{PachTardos06}}]\label{lem:PT-multidimensional}
Suppose $P\in\{0,1\}^{k_1\times\cdots\times k_r}$ and $H(P)$ has no isolated vertices.
Then 
$\Ex^r(P,n) \leq \ExH^r(H(P),rn)$.
\end{lemma}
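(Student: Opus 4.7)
The plan is to take an optimal $P$-free matrix $A \in \{0,1\}^{n\times\cdots\times n}$ with $\|A\|_1 = \Ex^r(P,n)$ and build from it a vertex-ordered $r$-uniform hypergraph $G$ on $rn$ vertices that contains no order-isomorphic copy of $H(P)$, while $|E(G)| = \|A\|_1$. This immediately yields the claimed bound $\Ex^r(P,n) \le \ExH^r(H(P),rn)$.

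The construction is the natural one. Partition $V(G)$ into $r$ blocks $V_1,\ldots,V_r$, each of size $n$, and order $V(G)$ so that all of $V_l$ precedes all of $V_{l+1}$, with $V_l = \{v_1^{(l)} < \cdots < v_n^{(l)}\}$ internally. Declare $\{v_{i_1}^{(1)}, \ldots, v_{i_r}^{(r)}\}$ to be a hyperedge exactly when $A(i_1,\ldots,i_r) = 1$. Then $|V(G)| = rn$ and $|E(G)| = \|A\|_1$ by construction, so the remaining task is to verify that $G$ is $H(P)$-free.

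Assume, for a contradiction, that $\phi: V(H(P)) \to V(G)$ is an order-preserving injection mapping edges of $H(P)$ to edges of $G$. Let $I_l$ denote the $l$-th natural interval of $V(H(P))$, consisting of the $k_l$ vertices arising from axis $l$ of $P$. Fix a non-isolated vertex $u \in I_l$, and choose an edge $e = \{u_1,\ldots,u_r\}$ of $H(P)$ with $u_m \in I_m$ and $u_l = u$. Order preservation gives $\phi(u_1) < \cdots < \phi(u_r)$, and the images occupy $r$ distinct blocks $V_{\sigma(1)}, \ldots, V_{\sigma(r)}$ because every edge of $G$ meets each block exactly once. Since the blocks are globally ordered $V_1 < V_2 < \cdots < V_r$, the permutation $\sigma$ must satisfy $\sigma(1) < \sigma(2) < \cdots < \sigma(r)$ and is therefore the identity, so $\phi(u_m) \in V_m$ for every $m$; in particular $\phi(u) \in V_l$. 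The no-isolated-vertex hypothesis upgrades this local conclusion into $\phi(I_l) \subseteq V_l$ for every $l$, and within each block the induced map is order-preserving, yielding indices $i_1^l < \cdots < i_{k_l}^l$ in $[n]$ for each axis. Translating each edge of $H(P)$ back through $\phi$ to an edge of $G$ and hence to a $1$-entry of $A$ exhibits $P$ as entry-wise dominated by $A[\{i_j^1\},\ldots,\{i_j^r\}]$, contradicting $P$-freeness.

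The only step that requires any thought is the forced identification $\phi(I_l) \subseteq V_l$, and this is exactly where the no-isolated-vertex hypothesis is essential: an isolated vertex of $I_l$ would be untouched by any edge constraint and could be mapped into a foreign block $V_{l'}$, which would obstruct the clean extraction of one index set per axis. Everything else — counting $|E(G)|$, recognising $\sigma$ as the identity, and reading off indices within each $V_l$ in ascending order — is routine once the block-to-block identification is established.
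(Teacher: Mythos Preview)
Your proof is correct and follows exactly the same approach as the paper: convert a $P$-free matrix $A$ into the ordered hypergraph $H(A)$ on $rn$ vertices and argue that any order-embedding of $H(P)$ into $H(A)$ must respect the axis partition, hence yields $P\prec A$. Your write-up is simply more detailed than the paper's one-sentence proof, and in particular you correctly pinpoint where the no-isolated-vertex hypothesis is used.
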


\begin{proof}
If $A\in\{0,1\}^{r^n}$ is $P$-free,
then $H(A)$ is an ordered $r$-uniform 
hypergraph on $rn$ vertices that is $H(P)$-free,
as every embedding of $H(P)$ in $H(A)$
must assign the $r$ vertices of each hyperedge
into vertices associated with 
distinct axes of $A$.
\end{proof}

\medskip 

Recall that \cref{conj:PachTardos} is equivalent to the $r=2$ case of \cref{conj:Furedietal-ordered-hypergraph}. Thus, \cref{thm:PT-refutation} refuting the former gives a counterexample of the latter conjecture that is a $2$-uniform ordered hypergraph, i.e., a vertex-ordered graph. We thought that providing counterexamples that are ``real'' hypergraphs is valuable, so here we give a similar counterexample that is an $r$-uniform ordered hypergraphs for any $r>2$, as claimed in the first paragraph of Section~\ref{sec:newresults}.

Let $S_0^r \in \{0,1\}^{3\times 4\times 1\times \cdots \times 1}$
be the $r$-dimensional version of the pattern $S_0$, 
where the last $r-2$ dimensions have width 1.
Then $H(S_0^r)$ is in fact an ordered $r$-uniform 
forest hypergraph with $r+5$ vertices.
Construct a matrix $A^r \in \{0,1\}^{n^r}$ such that 
every square submatrix 
$A \in \{0,1\}^{n\times n}$ 
obtained by fixing a single coordinate in the last $r-2$ axes 
is a copy of the lower bound construction $A$ from 
\cref{thm:PT-refutation}.
Then $\Ex^r(S_0^r,n) \geq n^{r-2} \|A\|_1 = n^{r-1}2^{\sqrt{\log n}-O(\log\log n)}$.  By \cref{lem:PT-multidimensional},
$\ExH^r(H(S_0^r),n) \geq \Ex^r(S_0^r,n/r) = \Omega(n^{r-1}2^{\sqrt{\log n}-O(\log\log n)})$, which refutes \cref{conj:Furedietal-ordered-hypergraph} for all $r\ge 2$ as $H(S_0^r)$ is an $r$-uniform forest of interval chromatic number $r$.

\end{document}